\numberwithin{equation}{section}
\numberwithin{figure}{section}
\theoremstyle{plain}
\newtheorem*{thm*}{\protect\theoremname}
\theoremstyle{remark}
\newtheorem*{rem*}{\protect\remarkname}
\theoremstyle{definition}
\newtheorem*{defn*}{\protect\definitionname}
\theoremstyle{plain}
\newtheorem*{conjecture*}{\protect\conjecturename}
\theoremstyle{plain}
\newtheorem{thm}{\protect\theoremname}[section]
\theoremstyle{plain}
\newtheorem{prop}[thm]{\protect\propositionname}
\theoremstyle{plain}
\newtheorem{lem}[thm]{\protect\lemmaname}
\theoremstyle{remark}
\newtheorem{rem}[thm]{\protect\remarkname}
\theoremstyle{plain}
\newtheorem{cor}[thm]{\protect\corollaryname}
\theoremstyle{definition}
\newtheorem{example}[thm]{\protect\examplename}
\DeclareMathOperator{\vol}{vol}
\DeclareMathOperator{\rad}{rad}
\DeclareMathOperator{\diam}{diam}
\DeclareMathOperator{\gexp}{gexp}
\DeclareMathOperator{\scal}{scal}
\DeclareMathOperator{\Ric}{Ric}
\DeclareMathOperator{\curv}{curv}
\providecommand{\conjecturename}{Conjecture}
\providecommand{\corollaryname}{Corollary}
\providecommand{\definitionname}{Definition}
\providecommand{\examplename}{Example}
\providecommand{\lemmaname}{Lemma}
\providecommand{\propositionname}{Proposition}
\providecommand{\remarkname}{Remark}
\providecommand{\theoremname}{Theorem}
\begin{document}
\title{Alexandrov Spaces with Maximal Radius}
\author{Karsten Grove}
\address{Department of Mathematics\\
 University of Notre Dame\\
 Notre Dame, IN 46556}
\email{kgrove2@nd.edu}
\thanks{The first author was supported in part by the National Science Foundation}
\author{Peter Petersen}
\address{520 Portola Plaza\\
 Dept of Math UCLA\\
 Los Angeles, CA 90095}
\email{petersen@math.ucla.edu}
\subjclass[2000]{53C23, 53C24}
\keywords{Alexandrov Geometry, Rigidity, Boundary Convexity.}
\begin{abstract}
In this paper we prove several rigidity theorems related to and including
Lytchak's problem. The focus is on Alexandrov spaces with $\curv\geq1$,
nonempty boundary, and maximal radius $\frac{\pi}{2}$. We exhibit
many such spaces that indicate that this class is remarkably flexible.
Nevertheless, we also show that when the boundary is either geometrically
or topologically spherical, then it is possible to obtain strong rigidity
results. In contrast to this one can show that with general lower
curvature bounds and strictly convex boundary only cones can have
maximal radius. We also mention some connections between our problems
and the positive mass conjectures. This paper is an expanded version
and replacement of \cite{GPold}.
\end{abstract}

\maketitle
\global\long\def\bbR{\mathbb{\mathbb{R}}}%
\global\long\def\span{\mathrm{span}}%
\global\long\def\rank{\mathrm{rank}}%
\global\long\def\bbC{\mathbb{\mathbb{C}}}%
\global\long\def\bbS{\mathbb{\mathbb{S}}}%
\global\long\def\bbZ{\mathbb{\mathbb{Z}}}%
\global\long\def\bbF{\mathbb{\mathbb{F}}}%
\global\long\def\bbP{\mathbb{\mathbb{P}}}%
\global\long\def\abs#1{\left|#1\right|}%
\global\long\def\norm#1{\left\Vert #1\right\Vert }%
\global\long\def\pitwo{\frac{\pi}{2}}%

\section*{Introduction}

It is a basic fact that any Alexandrov space $X$ with lower curvature
bound $1$, $\curv X\geq1$, has diameter, $\diam X\leq\pi$ and hence
radius, $\rad X\leq\pi$. Moreover, in case of equality $X$ is a
spherical suspension in the first case and the unit sphere in the
second. With a non-positive lower curvature bound $k\le0$ no such
upper bounds for diameter or radius exist in general. When $X$ has
non-empty boundary $\partial X$ and $\curv X\geq1$, then its radius
is further restricted to $\rad X\leq\pitwo$. In fact, if $\rad X>\pitwo$,
then $X$ is homeomorphic to a sphere \cite{GP}, \cite[Corollary 5.2.2]{Pe}.
Again, in the case of lower curvature bound $k\le0$, there are no
such bounds. It is, however, possible to control the radius when the
boundary is $\lambda$-convex. In this case we will see that indeed
there is an $r=$ $r(k,\lambda)$ such that $\rad X\leq r\left(k,\lambda\right)$,
where $r(1,\lambda)\nearrow\pitwo$ as $\lambda\searrow0$.

Our aim is to establish rigidity theorems for Alexandrov spaces with
boundary and maximal radius in the situations described above.

The most diverse case is when $\curv X\geq1$ and $\rad X=\pitwo$.
The simplest examples are spherical joins $E*S$, where $E$ and $S$
are Alexandrov spaces with $\curv\geq1$, $\rad E\geq\pitwo$, and
$\partial S\neq\emptyset$. A special case is when $S$ is a point
and $E*S$ becomes the spherical cone over $E$. However, there are
many more examples. They will be classified in dimensions at most
four in corollary \ref{cor:leq4} using the Topological Regularity
Theorem below.

In general dimensions the following result is very useful for establishing
several interesting rigidity theorems. Recall that when $X$ has $\curv\geq1$
and and boundary $\partial X\neq\emptyset$, then there is a unique
point $s$ at maximal distance $\leq\pitwo$ from $\partial X$, called
the \emph{soul} point of $X$. Also, a point is called regular when
its tangent cone is Euclidean.
\begin{thm*}[Inner Regularity]
Let $X$ be an $n$-dimensional Alexandrov space with $\curv\geq1$
and $\partial X\neq\emptyset$. If $\rad X=\frac{\pi}{2}$ and the
soul of $X$ is a regular point, then $X$ is isometric to a spherical
join $\mathbb{S}^{k}\left(1\right)*S$ , where $S$ is an $\left(n-k-1\right)$-dimensional
Alexandrov space with $\curv\geq1$, nonempty boundary, and $\rad S<\frac{\pi}{2}$. 
\end{thm*}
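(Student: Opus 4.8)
The plan is to exploit the regularity of the soul to realize $X$ as the image of a round hemisphere under a distance nonincreasing map, and then to extract from the rigidity of this map a totally geodesic round sphere factor $\mathbb{S}^{k}(1)$.

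First I would fix the soul $s$ and use the fact, available for such spaces, that the radius $\frac{\pi}{2}$ is attained at $s$, so that $d(s,x)\le\frac{\pi}{2}$ for every $x\in X$. Since $s$ is regular its space of directions is the round sphere $\Sigma_{s}=\mathbb{S}^{n-1}(1)$, and hence the closed ball of radius $\frac{\pi}{2}$ in the spherical cone over $\Sigma_{s}$ is the round hemisphere $\mathbb{S}^{n}_{+}$ with north pole $s$. I would then consider the map
\[
E\colon \mathbb{S}^{n}_{+}\longrightarrow X,\qquad (\xi,t)\mapsto\gexp_{s}(t\,\xi),
\]
sending a direction $\xi\in\Sigma_{s}$ at radius $t\in[0,\tfrac{\pi}{2}]$ to the point at distance $t$ from $s$ along the corresponding geodesic (the gradient exponential map of $d(s,\cdot)$). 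By Toponogov's theorem for $\curv\ge1$ this map is distance nonincreasing, and since $d(s,\cdot)\le\frac{\pi}{2}$ it is onto. I set
\[
A=\{x\in X:d(s,x)=\tfrac{\pi}{2}\},\qquad S=\{x\in X:d(x,a)=\tfrac{\pi}{2}\text{ for all }a\in A\},
\]
so that $A=E(\partial\mathbb{S}^{n}_{+})$ is the image of the equator and $s\in S$.

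The computational heart is the hinge comparison on $A$. For $a_{1},a_{2}\in A$ the minimal geodesics $sa_{1},sa_{2}$ form a hinge with both legs of length $\frac{\pi}{2}$ and opening angle $\angle(\Uparrow_{s}^{a_{1}},\Uparrow_{s}^{a_{2}})$, and the spherical law of cosines gives $d(a_{1},a_{2})\le\angle(\Uparrow_{s}^{a_{1}},\Uparrow_{s}^{a_{2}})$; equivalently $E$ restricted to the equator is distance nonincreasing onto $A$. The main obstacle is the rigidity needed to turn this into an equality. Here I would use that the points of $A$ are global maxima of $d(s,\cdot)$ together with the regularity of $s$ — so that $\Sigma_{s}$ is honestly the round sphere and the radial geodesics from $s$ neither branch nor lose length — to force equality in the hinge comparison above. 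The equality discussion in Toponogov's theorem should then yield that the set of directions $\Lambda=\Uparrow_{s}^{A}\subseteq\Sigma_{s}$ is a totally geodesic round subsphere $\mathbb{S}^{k}(1)\subseteq\mathbb{S}^{n-1}$, that $a\mapsto\Uparrow_{s}^{a}$ is an isometry $A\cong\mathbb{S}^{k}(1)$, and that the corresponding spherical triangles with vertex $s$ sit isometrically in $X$. I expect essentially all of the work to lie in this step; it is also precisely where regularity is indispensable, since for a singular soul the same inequalities hold without being equalities, which is what produces the flexible non-join examples.

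Finally I would assemble the join. The subsphere $\Lambda=\mathbb{S}^{k}(1)$ determines a join splitting $\Sigma_{s}=\mathbb{S}^{k}(1)*\Sigma_{s}S$ of the round sphere, with polar factor $\Sigma_{s}S=\mathbb{S}^{n-k-2}$; pushing this through $E$ and reading off, for each $x$, the join coordinates $(a,b,\theta)$ of its initial direction together with the radial distance, the triangle rigidity of the previous step identifies the metric of $X$ with the spherical join metric, giving an isometry $\mathbb{S}^{k}(1)*S\xrightarrow{\ \cong\ }X$. The factor $S=\{x:d(x,a)=\tfrac{\pi}{2}\ \forall a\in A\}$ inherits $\curv\ge1$, has dimension $n-k-1$ from the splitting of $\Sigma_{s}$, and satisfies $\rad S<\frac{\pi}{2}$, since any point of $S$ at distance $\frac{\pi}{2}$ from $s$ would belong to $A$, which is disjoint from the factor $S$. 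As $\partial X$ corresponds to $\mathbb{S}^{k}(1)*\partial S$, the boundary factor $\partial S$ is nonempty except in the degenerate spherical cone case $S=\mathrm{pt}$ already singled out in the introduction.
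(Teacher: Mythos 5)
Your overall architecture (map a round hemisphere onto $X$ by $\gexp_{s}$, show the edge $A=\{x:\abs{sx}=\pitwo\}$ corresponds isometrically to a nice subset of $S_{s}X=\bbS^{n-1}(1)$, and split off a join factor) matches the paper's, but the two steps you yourself flag as the heart are exactly where the proposal does not close. First, the equality $\abs{a_{1}a_{2}}=\angle(\overrightarrow{sa_{1}},\overrightarrow{sa_{2}})$ does not follow from ``global maxima plus regularity plus the equality discussion in Toponogov.'' With both legs equal to $\pitwo$, hinge comparison gives $\abs{a_{1}a_{2}}\le\angle$, while angle comparison gives $\angle\ge\tilde{\angle}=\abs{a_{1}a_{2}}$ --- these are the \emph{same} inequality, so there is no pair of opposite estimates to squeeze into an equality. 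The paper proves the needed isometry (lemma \ref{lem:gradexp}, between $E$ and the set $E'$ of directions of the gradient curves) by a genuinely different mechanism: along any minimal geodesic $c$ in $E$ the constant distance $\abs{s\,c(\tau)}\equiv\pitwo$ forces a rigid totally geodesic constant-curvature-$1$ triangle, and Petrunin's parallel translation is then used to show that the radial sides of these triangles are exactly the gradient curves. Notably, this step uses no regularity of the soul at all, so your heuristic that regularity is what produces the equality is misplaced; regularity enters only afterwards, to guarantee that $E'$ sits inside a \emph{round} sphere.

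Second, even granting $E\cong E'\subset\bbS^{n-1}(1)$ with $E'$ closed, convex, and $\rad E'\ge\pitwo$, your conclusion that $E'$ is a totally geodesic subsphere and $A\cong\bbS^{k}(1)$ is false in general: a closed hemisphere of $\bbS^{n-1}(1)$ is closed, convex, and has radius $\pitwo$ without being a subsphere, and indeed the edge $E$ can properly contain the $\bbS^{k}(1)$ factor and have nonempty boundary. The correct conclusion (the convexity proposition opening section 4) is only that $\diam E'=\pi$, whence $\diam X=\pi$ and $X=\Sigma_{1}S_{p}X$ is a \emph{single} suspension; one then checks that $S_{p}X$ again has a regular soul (because $S_{s}X=\Sigma_{1}S_{s}S_{p}X$ is round) and iterates until the remaining factor has radius $<\pitwo$. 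The $\bbS^{k}(1)$ factor is accumulated one suspension at a time, and the bound $\rad S<\pitwo$ is the termination condition of this induction, not --- as in your last paragraph --- a consequence of $S$ being disjoint from $A$, which can fail before the iteration terminates. So the proposal needs both the rigid-triangle lemma and the inductive suspension splitting to become a proof.
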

A natural problem raised by Lytchak asks whether an $n$-dimensional
Alexandrov space with curvature $\geq1$ has the property that its
boundary has volume $\leq\vol\mathbb{S}^{n-1}\left(1\right)$. Petrunin
answered this in the affirmative in \cite[section 3.3.5]{Pe}. Lytchak
further asked what happens when the boundary has maximal volume \cite{Ly}.
Obviously the hemisphere is an example, but so is the intersection
of two hemispheres making an angle $\alpha<\pi$. We refer to this
as an \emph{Alexandrov lens} and denote it by $L_{\alpha}^{n}$ (note
that $L_{\pi}^{n}$ is the hemisphere). The above result can be used
to prove that these exhaust all possibilities.
\begin{thm*}[Maximal Volume]
Let $X$ be an $n$-dimensional Alexandrov space with $\curv\geq1$
and $\partial X\neq\emptyset$. If $\vol\partial X=\vol\mathbb{S}^{n-1}\left(1\right)$,
then $X$ is isometric to $L_{\alpha}^{n}$ for some $0<\alpha\leq\pi$. 
\end{thm*}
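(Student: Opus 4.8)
The plan is to argue by induction on $n$, using Inner Regularity to split off a round sphere factor and the volume formula for joins to pass the hypothesis to one lower dimension. The starting observation is that the intrinsic metric on $\partial X$ is again an $(n-1)$-dimensional Alexandrov space with $\curv\ge1$; Petrunin's bound is then the Bishop inequality $\vol\partial X\le\vol\mathbb{S}^{n-1}(1)$ applied to $\partial X$, and our hypothesis puts us in its rigidity case, so that in particular $\partial X$ is isometric to $\mathbb{S}^{n-1}(1)$. To invoke Inner Regularity I must upgrade this equality to its two hypotheses: $\rad X=\frac{\pi}{2}$ and regularity of the soul $s$.

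I expect this upgrade to be the main obstacle. For regularity I would compare $\partial X$ with the space of directions $\Sigma_s$ at the soul: projecting $\partial X$ to $\Sigma_s$ along the gradient exponential map based at $s$, the volume comparison for $\curv\ge1$ gives a Jacobian estimate yielding the chain
\[
\vol\partial X\le\vol\Sigma_s\le\vol\mathbb{S}^{n-1}(1),
\]
the right inequality being Bishop for the $(n-1)$-dimensional space $\Sigma_s$. Since the two ends agree, equality propagates and forces $\vol\Sigma_s=\vol\mathbb{S}^{n-1}(1)$, hence $\Sigma_s\cong\mathbb{S}^{n-1}(1)$, $T_sX$ Euclidean, and $s$ regular; a strictly smaller $\rad X$ would likewise shrink the boundary, so $\rad X=\frac{\pi}{2}$ as well. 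Making this comparison and its equality discussion precise is the delicate point. I would then split off the hemisphere at once: if $\operatorname{dist}(s,\partial X)=\frac{\pi}{2}$, the maximal-distance rigidity forces $X$ to be the spherical cone over $\partial X$, which with $\partial X\cong\mathbb{S}^{n-1}(1)$ is exactly $L_\pi^n$.

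In the remaining case $\operatorname{dist}(s,\partial X)<\frac{\pi}{2}$, Inner Regularity applies and gives an isometry $X\cong\mathbb{S}^k(1)*S$ with $S$ an $m$-dimensional ($m=n-k-1$) Alexandrov space of $\curv\ge1$, $\partial S\ne\emptyset$, and $\rad S<\frac{\pi}{2}$. As $\mathbb{S}^k$ has no boundary, $\partial X=\mathbb{S}^k(1)*\partial S$, and the volume formula for spherical joins,
\[
\vol\bigl(\mathbb{S}^k(1)*Y\bigr)=\Bigl(\int_0^{\pi/2}\cos^k\theta\,\sin^{\dim Y}\theta\,d\theta\Bigr)\vol\mathbb{S}^k(1)\cdot\vol Y,
\]
applied to $Y=\partial S$ and to $Y=\mathbb{S}^{m-1}(1)$ in $\mathbb{S}^{n-1}(1)=\mathbb{S}^k(1)*\mathbb{S}^{m-1}(1)$, shows that $\vol\partial X=\vol\mathbb{S}^{n-1}(1)$ is equivalent to $\vol\partial S=\vol\mathbb{S}^{m-1}(1)$. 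Thus $S$ inherits the maximal-volume hypothesis in dimension $m<n$.

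The induction now closes. If $m\ge2$ the inductive hypothesis gives $S\cong L_\beta^m$; but every such lens contains two points at distance $\pi$ (e.g. antipodal points of its edge $\mathbb{S}^{m-2}(1)$, or antipodal equatorial points when $\beta=\pi$) and hence has $\rad L_\beta^m=\frac{\pi}{2}$ (two points at distance $\pi$ force circumradius $\ge\frac{\pi}{2}$), contradicting $\rad S<\frac{\pi}{2}$. Therefore $m=1$, so $S$ is an arc $[0,\ell]$ with $\ell<\pi$ forced by $\rad S=\frac{\ell}{2}<\frac{\pi}{2}$; recognizing $L_\alpha^n=\mathbb{S}^{n-2}(1)*[0,\alpha]$ identifies $X\cong L_\ell^n$ with $0<\ell<\pi$. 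Together with the hemisphere $L_\pi^n$ found above, and the immediate base case $n=1$ (an arc, $\partial X=\mathbb{S}^0$), this completes the induction and the proof.
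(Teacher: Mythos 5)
Your proof follows essentially the same route as the paper: volume comparison via the gradient exponential map at the soul to force $\rad X=\pitwo$ and $S_{s}X\cong\mathbb{S}^{n-1}\left(1\right)$, then Inner Regularity, then a dimension count on $S$. One assertion, however, must be excised: your ``starting observation'' that $\partial X$ with its intrinsic metric is an $(n-1)$-dimensional Alexandrov space with $\curv\geq1$ is precisely the open boundary conjecture (the paper explicitly cautions that this is unknown), so neither Petrunin's bound nor the rigidity $\partial X\cong\mathbb{S}^{n-1}\left(1\right)$ may be obtained by applying Bishop comparison to $\partial X$ itself. Fortunately this is not load-bearing: Petrunin's actual argument is the chain $\vol\partial X\leq\vol\partial\bar{B}\left(o_{s},r\right)\leq\vol S_{s}X\leq\vol\mathbb{S}^{n-1}\left(1\right)$ through the $1$-Lipschitz map $\gexp_{s}$, which is exactly the comparison you set up in your second paragraph, and in the equality case the restriction of $\gexp_{s}$ to the unit sphere in $T_{s}X$ is a volume-preserving $1$-Lipschitz surjection onto $\partial X$, which yields $\partial X\cong\mathbb{S}^{n-1}\left(1\right)$ legitimately (this is what you need to identify the hemisphere in your case $\left|s\,\partial X\right|=\pitwo$). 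With that repair, your ending --- the join volume formula passing the maximal-volume hypothesis down to $\partial S$, together with the observation that $L_{\beta}^{m}$ has radius $\pitwo$ for $m\geq2$, forcing $S=\left[0,\ell\right]$ --- is a correct and somewhat more explicit version of the paper's terse final step.
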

This theorem shows that $\partial X$ with its induced inner metric
is isometric to the unit sphere $\mathbb{S}^{n-1}$. In the special
case where $X$ is a leaf space, this result was proved in \cite{GMP}
and played a key role in confirming the boundary conjecture for Alexandrov
spaces that happen to be leaf spaces as well. 

The Inner Regularity Theorem can with the help of \cite{RW} be extended
in several ways. First we have a version with slightly more flexibility
than the above join examples and with purely topological assumptions
about the boundary.
\begin{thm*}[Topological Regularity]
Let $X^{n}$ be an Alexandrov space with $\curv\ge1$, $\partial X\neq\emptyset$,
and $\rad X=\pitwo$. If $\partial X$ is a topological manifold and
a $\bbZ_{2}$-homology sphere, then the double is a finite quotient
of a join: $D\left(X\right)=\left(A*B\right)/G$. Here $G$ is a finite
group acting effectively and isometrically on both $A$ and $B$ whose
action is extended to the spherical join $A*B$.
\end{thm*}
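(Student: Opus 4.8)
The plan is to reduce everything to the Inner Regularity Theorem after passing to a finite branched cover, using \cite{RW} to trade the metric regularity of the soul for the topological hypotheses on $\partial X$. First I would form the double $Y=D(X)$, which by Perelman's doubling theorem is a closed $n$-dimensional Alexandrov space with $\curv\ge1$ carrying an isometric reflection $\iota$ with $\mathrm{Fix}(\iota)=\partial X$; the target statement is then $Y=(A*B)/G$. The two objects to be produced, a join factor and a finite group, are already visible in the regular model: when $X=\bbS^{k}*S$ one has $D(X)=\bbS^{k}*D(S)$ with $G$ trivial. So the strategy is to manufacture a regular soul on a cover, split off a round sphere there, and transport the splitting back down with a finite deck group.

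Let $s$ be the soul of $X$, an interior point of $Y$, with space of directions $\Sigma_{s}X$, an $(n-1)$-dimensional Alexandrov space with $\curv\ge1$. Using the gradient flow of the distance to $\partial X$ (Perelman's fibration theorem), a small distance sphere about $s$, and in fact $\partial X$ itself, is homeomorphic to $\Sigma_{s}X$, so the hypotheses transfer: $\Sigma_{s}X$ is a topological manifold and a $\bbZ_{2}$-homology sphere. The crucial step is to invoke \cite{RW}: under exactly these topological hypotheses the metric irregularity of $s$ is finite in nature, i.e. the tangent cone at $s$ is a global quotient $\mathbb{R}^{n}/G$ by a finite group $G\subset O(n)$. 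This produces a branched $G$-cover $\hat X\to X$, ramified over the soul, which is again an Alexandrov space with $\curv\ge1$ and $\rad\hat X=\pitwo$, and whose soul $\hat s$ is now a \emph{regular} point.

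With $\hat s$ regular, the Inner Regularity Theorem applies to $\hat X$ and gives an isometric splitting $\hat X=\bbS^{k}(1)*S$ with $\partial S\neq\emptyset$ and $\rad S<\pitwo$. Doubling commutes with joining along a factor without boundary, so $D(\hat X)=\bbS^{k}*D(S)$; set $A=\bbS^{k}$ and $B=D(S)$. The deck group $G$ fixes $\hat s$ and hence preserves the canonical orthogonal splitting $\mathbb{R}^{n}=\mathbb{R}^{k+1}\oplus\mathbb{R}^{n-k-1}$ of the tangent cone determined by the sphere factor; it therefore acts isometrically on $A=\bbS^{k}$ (through its representation on $\mathbb{R}^{k+1}$) and, preserving $\partial\hat X=\bbS^{k}*\partial S$, also on $B=D(S)$, its diagonal action being precisely the extension to the spherical join. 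Finally, since the reflection $\iota$ lifts to $\hat X$ and commutes with $G$, doubling is compatible with the quotient and $Y=D(X)=D(\hat X)/G=(A*B)/G$, as required.

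The hard part is the middle step: upgrading the purely topological hypotheses on $\partial X$ to the metric conclusion that the soul is quasi-regular, so that a single finite branched cover has a regular soul, all while retaining $\curv\ge1$, $\rad=\pitwo$, and an honest isometric $G$-action; this is exactly where \cite{RW} is indispensable, and where one must verify that $\hat X$ is genuinely an Alexandrov space so that Inner Regularity is applicable. A secondary but nontrivial point is the bookkeeping of $G$: one must check that $\iota$ lifts and commutes with $G$ (so the two descriptions of $Y$ agree) and that $G$ acts effectively on each of $A$ and $B$ separately, effectiveness on the join alone not being sufficient for the stated conclusion.
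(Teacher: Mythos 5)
Your middle step is not correct, and it is the load-bearing one. The hypotheses that $\partial X$ (equivalently $S_{s}X$) is a topological manifold and a $\bbZ_{2}$-homology sphere do \emph{not} imply that the tangent cone at the soul is $\bbR^{n}/G$ for a finite $G\subset O(n)$, and \cite{RW} contains no such statement; that paper is about dual pairs $A,B$ in a positively curved space with $\dim A+\dim B=n-1$, not about recognizing tangent cones as orbifold points. A concrete counterexample to your reduction: take $X=\bbS^{1}(r)*[0,\alpha]$ with $\frac{1}{2}<r<1$ and $1/r\notin\bbZ$. Then $\partial X=\Sigma_{1}\bbS^{1}(r)$ is a topological $2$-sphere, all hypotheses hold (with $G$ trivial in the conclusion), but $S_{s}X=\Sigma_{1}\bbS^{1}(r)$ is a spindle that is not isometric to $\bbS^{2}(1)/G$ for any finite $G$, and no finite branched cover of $X$ has a regular soul. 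This reflects the fact that the theorem's conclusion allows $A$ and $B$ to be arbitrary Alexandrov spaces with $\curv\geq1$ (they arise as normal spaces $N_{x}B$, $N_{y}A$), so any strategy that funnels everything through the Inner Regularity Theorem --- which forces the round-sphere factor $\bbS^{k}(1)$ --- cannot prove the statement.

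The actual proof goes through the dual pair rather than the soul: one refines $E,S$ to a dual pair $\hat{E},\hat{S}$ with $\partial\hat{E}=\emptyset$, passes to the double $D(X)$ with the pair $F$ (the copy of $\hat{E}$) and $D(\hat{S})$, and uses Lefschetz/Alexander duality together with the $\bbZ_{2}$-homology sphere and manifold assumptions to show $H_{q}(F;\bbZ_{2})\simeq H^{n-1-q}(D(\hat{S});\bbZ_{2})$, hence $\dim F+\dim D(\hat{S})\geq n-1$; Frankel's theorem gives the reverse inequality. Only then do Theorems A and B of \cite{RW} apply to produce $D(X)=(N_{x}D(\hat{S})*N_{y}F)/G$. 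This dimension count via duality is the content you have skipped, and it cannot be replaced by a regularization of the soul.
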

Since $\partial X$ and the space of directions at the soul, $S_{s}X$,
are homeomorphic we could, like in the Inner Regularity case, have
made the topological restrictions on $S_{s}X$ instead. This helps
us show that all examples in dimensions $\leq4$, in fact, satisfy
the conclusion of this theorem even though not all satisfy the topological
assumption. This theorem also suggests that weaker geometric assumptions
than those from the Inner Regularity Theorem might be used to obtain
rigidity. The next result offers a very general geometric condition
that guarantees that the space is a spherical join.
\begin{thm*}[Weak Inner Regularity]
Let $X^{n}$ be an Alexandrov space with $\curv\ge1$, $\partial X\neq\emptyset$,
and $\rad X=\pitwo$. If $\rad S_{s}X>\pitwo$, then $X=\hat{E}*\hat{S}$,
where $\rad\hat{E}>\pitwo$ and $\rad S_{s}\hat{S}>\pitwo$. 
\end{thm*}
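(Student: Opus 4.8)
The plan is to reduce the theorem to the bare assertion that $X$ splits nontrivially as a spherical join with a closed factor, and then to recover both radius inequalities automatically. Let $s$ be the soul and $\Sigma=S_{s}X$ its space of directions; since $s$ is interior, $\Sigma$ is a closed $(n-1)$-dimensional Alexandrov space with $\curv\ge1$, and the hypothesis is precisely $\rad\Sigma>\pitwo$. As a preliminary (part of the structure theory behind the Inner Regularity Theorem) I would record that the soul realizes the radius, so $d(s,\cdot)\le\pitwo$ with the maximum attained on a nonempty polar set. Now suppose we have produced a splitting $X=\hat{E}*\hat{S}$ with $\hat{E}$ closed and the soul lying in the complementary factor $\hat{S}$, so that $\hat{s}=s$. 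Passing to directions at $s$ gives $S_{s}X=\hat{E}*S_{\hat{s}}\hat{S}$, and a direct computation of the radius of a spherical join shows that $\rad(A*B)>\pitwo$ holds if and only if $\rad A>\pitwo$ and $\rad B>\pitwo$. Combined with $\rad S_{s}X>\pitwo$, this simultaneously forces $\rad\hat{E}>\pitwo$ and $\rad S_{\hat{s}}\hat{S}>\pitwo$, which are the two conclusions; note that closedness of $\hat{E}$ is itself equivalent to $\rad\hat{E}>\pitwo$, since a nonempty boundary would force radius $\le\pitwo$. Thus everything reduces to exhibiting the join.

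To exhibit it I would follow the pattern of the Inner Regularity Theorem, where regularity of the soul makes $\Sigma=\bbS^{n-1}$ round and produces a round factor $\bbS^{k}$; here the weaker condition $\rad\Sigma>\pitwo$ should instead produce a general closed factor $\hat{E}$. The factor is detected infinitesimally: first split the link as $\Sigma=\hat{E}*\Sigma_{0}$, taking $\hat{E}$ to be the maximal join factor accounting for the radius excess, and then globalize this splitting to $X$. Globalization is where maximality of $\rad X=\pitwo$ enters: each direction $\xi\in\Sigma$ is realized by a minimal geodesic from $s$, which either reaches the polar set $\{\,d(s,\cdot)=\pitwo\,\}$ or terminates on $\partial X$, and the spread recorded by $\rad\Sigma>\pitwo$ should guarantee that these geodesics sweep out a pair of convex subsets at constant distance $\pitwo$ whose joining geodesics fill $X$. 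Concretely this amounts to showing that $\cos d(\cdot,\hat{E})$ realizes the extremal spherical Hessian everywhere, which is exactly the rigidity case of the Toponogov comparison and is the statement $X=\hat{E}*\hat{S}$. The join-detection results of \cite{RW}, together with the radius sphere theorem recalled in the introduction, are what should license the passage from the infinitesimal splitting of $\Sigma$ to a genuine metric splitting of $X$.

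The hard part is this globalization. The hypothesis is purely infinitesimal, a statement about angles at the single point $s$, whereas the conclusion is a rigid global product; and $\rad\Sigma>\pitwo$ by itself never splits a general closed space (a slightly non-round sphere has radius $>\pitwo$ yet is not a join), so the argument must genuinely use that $\Sigma$ is the link at the soul of a space of maximal radius rather than an isolated space of directions. I expect to run a gradient push of $\cos d(\cdot,\hat{E})$, using that its maximum set $\{\,d(\cdot,\hat{E})=\pitwo\,\}=\hat{S}$ is nonempty (it contains $s$) and convex, and that no early focusing of the geodesics emanating from $\hat{E}$ occurs before distance $\pitwo$, the latter being precisely what the link spread prevents. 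A secondary subtlety is to carry out the extraction maximally, so that the leftover factor satisfies the strict inequality $\rad S_{\hat{s}}\hat{S}>\pitwo$ rather than merely $\ge\pitwo$; by the equivalence noted in the first paragraph this is automatic once $\hat{E}$ absorbs the entire radius excess, but one must still check that such a maximal closed factor exists and is well defined.
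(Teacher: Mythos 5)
Your reduction in the first paragraph is essentially fine (the computation that $\rad(A*B)>\pitwo$ exactly when both factors have radius $>\pitwo$ is correct and is indeed how the two concluding inequalities are extracted once the join exists, although closedness of $\hat{E}$ is not \emph{equivalent} to $\rad\hat{E}>\pitwo$: real projective space is closed with radius $\pitwo$). The core of the argument, however, has a genuine gap, and you have in fact named it yourself: you propose to ``first split the link as $\Sigma=\hat{E}*\Sigma_{0}$'' and then globalize, while simultaneously observing that $\rad\Sigma>\pitwo$ never forces a closed space to split as a join. These two statements are in direct conflict. A generic link with $\rad S_{s}X>\pitwo$ has no nontrivial join factor at all, so the maximal join factor you want to extract may be empty and there is then nothing to globalize. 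The factor $\hat{E}$ cannot be found inside $S_{s}X$; in the paper it is produced globally, as a refinement of the edge $E=\left\{ x:\abs{xs}=\pitwo\right\} $ inside $\partial X$ (iterating the dual-set construction until $\partial\hat{E}=\emptyset$), and only afterwards is its trace $\hat{E}'\subset S_{s}X$ compared with the normal space $N_{s}\hat{S}$.

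The second missing mechanism is how the hypothesis $\rad S_{s}X>\pitwo$ is actually consumed. The paper first uses it topologically: by the radius sphere theorem $S_{s}X$ is a topological sphere, hence $\partial X$ is a manifold and a $\bbZ_{2}$-homology sphere, which places one in the setting of the Topological Regularity Theorem \ref{thm:top}; via Lefschetz/Alexander duality and \cite{RW} applied to the double, this yields only $D\left(X\right)=\left(X_{1}*X_{2}\right)/G$ for a finite group $G$ --- a priori a quotient of a join, not a join, and the examples of section 3 show such quotients really occur when $\rad S_{s}X=\pitwo$. The step that kills $G$ is a dimension count showing $\hat{E}'=N_{s}\hat{S}$, whence $s$ has a single preimage $\bar{s}$, which is therefore a $G$-fixed point; then the fact that a nontrivial isometric action of a compact group on a space with $\curv\geq1$ forces the quotient to have radius $\leq\pitwo$, applied to $S_{s}X=\left(S_{\bar{s}}\left(X_{1}*X_{2}\right)\right)/G$, contradicts $\rad S_{s}X>\pitwo$ unless $G$ is trivial. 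Your proposed gradient push of $\cos d(\cdot,\hat{E})$ and appeal to Toponogov rigidity never rules out these finite quotients, which is precisely the obstruction that distinguishes the strict hypothesis $\rad S_{s}X>\pitwo$ from the borderline case.
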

\begin{rem*}
Note that the concluding geometric restrictions imply that $\hat{E}$
is topologically a sphere and that $\hat{S}$ is topologically a closed
disk.
\end{rem*}
\bigskip{}

We now turn to the case where $\curv X\geq k$, in particular allowing
$k\leq0$. As already mentioned this does not yield an upper bound
on the radius of $X$. However, when the boundary is strictly convex
in the sense studied \cite{A-B}, such a bound does exist. Specifically,
one can \emph{quantify convexity} of the boundary of an Alexandrov
by comparing with strictly convex model spaces. The closed $r_{0}$-ball
$\bar{B}_{k}\left(r_{0}\right)$ in the simply connected space form
of constant curvature $k$ has a boundary that is totally umbilic:
$\mathrm{II}_{\partial B}=\lambda_{0}g_{\partial B}$, where $\lambda_{0}=\lambda_{0}\left(r_{0},k\right)$.
Specifically: 
\begin{eqnarray*}
\lambda_{0}\left(r_{0},0\right) & = & \frac{1}{r_{0}},\\
\lambda_{0}\left(r_{0},1\right) & = & \cot r_{0},\\
\lambda_{0}\left(r_{0},-1\right) & = & \coth r_{0}.
\end{eqnarray*}

\begin{defn*}
We say that an Alexandrov space $X$ has \emph{$\lambda_{0}$-convex
boundary}, where $\lambda_{0}>0$, provided: for each $x\in\mathrm{int}X$
and $p\in\partial X$ with $\abs{xp}=\abs{x\partial X}$ we have

\[
\abs{pq}\cos\left(\angle\left(\overrightarrow{px},\overrightarrow{pq}\right)\right)-\frac{\lambda_{0}}{2}\abs{pq}^{2}\geq o\left(\abs{pq}^{2}\right)
\]
for all $q\in\partial X$ sufficiently near $p$. \smallskip{}

Here $\overrightarrow{pq}\in S_{p}X$ denotes the direction at $p$
of a minimal geodesic from $p$ to $q$ and of length $|pq|$. The
law of cosines shows that $\bar{B}_{k}\left(r_{0}\right)$ has $\lambda_{0}$-convex
boundary in this sense. 
\end{defn*}
This type of quantified convexity was studied in detail in \cite{A-B}.
We shall use it to solve the analogue of Lytchak's problem for a general
lower curvature bound and strictly convex boundary. This was also
done with slightly different techniques in \cite{GL} assuming that
$k\geq0$. 
\begin{thm*}
Let $X$ be an Alexandrov space with curvature $\geq k$ and $\lambda_{0}$-convex
boundary, where $\lambda_{0}^{2}>\max\left\{ -k,0\right\} $. If $r_{0}$
is defined by $\lambda_{0}\left(r_{0},k\right)=\lambda_{0}$, then
$\rad X\leq r_{0}$ and $\vol\partial X\leq\vol\partial\bar{B}_{k}\left(r_{0}\right)$.
Moreover, if $\rad X=r_{0}$, then $X$ is isometric to a cone $C_{k}Y$
with constant radial curvature $k$; and if $\vol\partial X=\vol\partial\bar{B}_{k}\left(r_{0}\right)$,
then $X$ is isometric to $\bar{B}_{k}\left(r_{0}\right)$. 
\end{thm*}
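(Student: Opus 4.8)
The plan is to compare $X$ with the model ball $\bar{B}_{k}\left(r_{0}\right)$ through the distance-to-boundary function $\rho(x)=\abs{x\partial X}$, whose behaviour in the model is explicit: there $\rho=r_{0}-\abs{xs}$, the level sets $\{\rho=t\}$ are the geodesic spheres of radius $r_{0}-t$ about the centre $s$, and $\rho$ is concave with $\Hess\rho=-\mathrm{ct}_{k}(r_{0}-\rho)\,(g-d\rho\otimes d\rho)$, where $\mathrm{ct}_{k}=\mathrm{cs}_{k}/\mathrm{sn}_{k}$ is the generalized cotangent. The hypothesis that $\partial X$ is $\lambda_{0}$-convex is exactly the statement that, at a foot point $p$ of an interior point, $\partial X$ is at least as convex as the model boundary, i.e. $\II_{\partial X}\geq\lambda_{0}=\mathrm{ct}_{k}(r_{0})$ in the inward normal direction; synthetically this is the initial condition $S(0)\leq-\lambda_{0}$ for the Riccati relation $S'+S^{2}+\mathcal{R}=0$ satisfied by the shape operator $S=\Hess\rho$ of the level sets, where $\mathcal{R}$ is the relevant directional curvature and $\curv X\geq k$ gives $\mathcal{R}\geq k$.

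First I would prove $\rad X\leq r_{0}$. Feeding the quantified convexity of \cite{A-B} in as the seeding inequality $S(0)\leq-\lambda_{0}$ and using $\curv X\geq k$, Riccati comparison along an inward unit-speed geodesic issuing normally from $\partial X$ gives $S(t)\leq-\mathrm{ct}_{k}(r_{0}-t)$ for as long as the geodesic minimizes the distance to $\partial X$. Since $-\mathrm{ct}_{k}(r_{0}-t)\to-\infty$ as $t\uparrow r_{0}$, a focal point of $\partial X$ occurs no later than $t=r_{0}$, so no normal geodesic realizes the distance to $\partial X$ beyond length $r_{0}$. As the soul $s$ is the point of maximal distance to $\partial X$ and is joined to its foot point by such a minimal normal geodesic, this yields $\abs{s\partial X}\leq r_{0}$, i.e. $\rad X\leq r_{0}$. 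In the Alexandrov category the Hessian computation is replaced by its synthetic form (one-sided support functions and the concavity estimates for $\rho$ furnished by the convexity framework), but the mechanism is unchanged.

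Next, for $\vol\partial X\leq\vol\partial\bar{B}_{k}\left(r_{0}\right)$ I would pass to the intrinsic geometry of $\partial X$. By the Gauss equation for convex hypersurfaces, again in the synthetic form of \cite{A-B}, the two bounds $\curv X\geq k$ and $\II_{\partial X}\geq\lambda_{0}$ combine to show that $\partial X$, with its induced length metric, is an $\left(n-1\right)$-dimensional Alexandrov space with $\curv\geq k+\lambda_{0}^{2}$ (the product of the two principal curvatures in any tangent $2$-plane is $\geq\lambda_{0}^{2}$). The identity $\mathrm{cs}_{k}^{2}+k\,\mathrm{sn}_{k}^{2}=1$ gives $k+\lambda_{0}^{2}=k+\mathrm{ct}_{k}(r_{0})^{2}=\mathrm{sn}_{k}(r_{0})^{-2}=:\kappa$, and the assumption $\lambda_{0}^{2}>\max\left\{-k,0\right\}$ is precisely what makes $\kappa>0$ and $r_{0}$ finite. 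Bishop's volume inequality for Alexandrov spaces then gives $\vol\partial X\leq\vol\bbS^{n-1}(\mathrm{sn}_{k}(r_{0}))=\vol\partial\bar{B}_{k}\left(r_{0}\right)$, the comparison sphere being the round sphere of radius $\mathrm{sn}_{k}(r_{0})$ and curvature $\kappa$.

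Finally I would treat the two rigidity assertions as the equality cases of these two comparisons. If $\rad X=r_{0}$ then the Riccati comparison is an equality along every minimal geodesic from the soul; this forces the radial curvature from $s$ to equal $k$ and pins $\rho(x)+\abs{xs}\equiv r_{0}$, so that the radial geodesics from $s$ together with the warping $\mathrm{sn}_{k}$ realize the metric in the form $dt^{2}+\mathrm{sn}_{k}(t)^{2}g_{Y}$, i.e. as the $k$-cone $C_{k}Y$ over $Y=S_{s}X$. If $\vol\partial X=\vol\partial\bar{B}_{k}\left(r_{0}\right)$ then equality in Bishop forces $\partial X$ to be the round sphere $\bbS^{n-1}(\mathrm{sn}_{k}(r_{0}))$, and equality in the Gauss estimate forces $\II_{\partial X}\equiv\lambda_{0}$ together with radial curvature $\equiv k$ along the boundary; feeding this into the now-tight focal comparison makes the inward normal exponential map a model isometry, whence $X\cong\bar{B}_{k}\left(r_{0}\right)$. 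The main obstacle is this rigidity step, namely upgrading pointwise equality in the comparison inequalities to a global isometry with the model when no smooth structure is available. The two delicate points are (i) propagating tightness from the single maximizing direction at the soul to all directions, which exploits that the soul is a critical point of $\rho$, and (ii) recognizing the resulting radial structure as a genuine metric $k$-cone $C_{k}Y$; for the latter I would combine the cone/join splitting already used in the Inner Regularity Theorem with the constant-curvature rigidity of \cite{A-B}.
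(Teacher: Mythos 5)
There are two genuine gaps. The first and most serious is in your argument for $\rad X\leq r_{0}$: the focal-point/Riccati comparison along inward normal geodesics from $\partial X$ only shows that no point of $X$ is at distance $>r_{0}$ from the boundary, i.e.\ it bounds the \emph{inradius} $\abs{s\,\partial X}$, not the radius $\inf_{p}\sup_{x}\abs{px}$. These are genuinely different quantities (the paper points out that the lens $L_{\alpha}^{n}$ has inradius $\frac{\alpha}{2}$ but radius $\pitwo$; a thin solid ellipsoid is a smooth strictly convex example where the inradius is far smaller than the radius), so your concluding sentence ``$\abs{s\,\partial X}\leq r_{0}$, i.e.\ $\rad X\leq r_{0}$'' does not follow. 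The paper's proof first gets the inradius bound $r_{1}\leq r_{0}$ from \cite[Cor 1.9 (1)]{A-B}, and then — this is the step you are missing — runs the comparison $\ddot f+kf\leq-\lambda_{0}$ for $f=\phi\circ r$ along quasi-geodesics \emph{emanating from the soul}, with initial data $f(0)=\phi(r_{1})$, $\dot f(0)\leq 0$ coming from criticality of the soul; the explicit solution $\bar f$ of the comparison ODE satisfies $\bar f(r_{0})\leq 0$, forcing every such quasi-geodesic to reach $\partial X$ by arclength $r_{0}$ and hence $X\subset\bar B(s,r_{0})$.

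The second gap is in the volume bound. You derive $\curv\partial X\geq k+\lambda_{0}^{2}$ from a synthetic Gauss equation and then apply Bishop to $\partial X$ as an Alexandrov space. But, as the paper explicitly notes, it is not known whether the boundary of an Alexandrov space (with its induced inner metric) is even an Alexandrov space, let alone one satisfying a Gauss-type curvature bound; your Bishop step and the corresponding rigidity step therefore rest on an open problem. The paper avoids this entirely: since $\gexp_{p}(k;\cdot):C_{k}(S_{p}X)(r)\rightarrow X$ is onto and distance nonincreasing once $\rad X\leq r$, one gets $\vol_{n-1}\partial X\leq\vol_{n-1}\partial C_{k}(S_{p}X)(r)\leq\vol_{n-1}\partial\bar B_{k}(r_{0})$ using Petrunin's solution of Lytchak's problem ($\vol S_{p}X\leq\vol\mathbb{S}^{n-1}(1)$), with strict inequality when $\rad X<r_{0}$; equality then forces $\rad X=r_{0}$, hence the cone structure from the first rigidity statement, and finally $S_{s}X=\mathbb{S}^{n-1}(1)$. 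Your sketch of the $\rad X=r_{0}$ rigidity (tightness propagating from the soul, $d+r\equiv r_{0}$, cone recognition via the radial structure) is in the right spirit and matches the paper's use of two-sided bounds on the modified distance functions $h=\psi\circ d$ and $f=\phi\circ r$ together with the gradient exponential map, but it inherits the first gap, since one needs to know a priori that every quasi-geodesic from the soul meets $\partial X$ at time exactly $r_{0}$.
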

The questions discussed so far are related to another circle of ideas
that come from the positive mass conjectures. The original general
version first formulated and proved by Miao in \cite{Miao} is similar
to the theorem just mentioned above.
\begin{conjecture*}
If $(M,g)$ is a Riemannian $n$-manifold with $\scal\geq0$, $\partial M=S^{n-1}(1)$,
and $\Pi_{\partial M}\geq g_{\partial B(0,1)}$, then $M=B(0,1)\subset\mathbb{R}^{n}$. 
\end{conjecture*}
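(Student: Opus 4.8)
The plan is to reduce the statement to the rigidity case of the positive mass theorem, following Miao's strategy of passing to an asymptotically flat manifold carrying a metric corner. First I would translate the convexity hypothesis into a bound on the mean curvature. The boundary $\partial B(0,1)=S^{n-1}(1)$ is totally umbilic in $\mathbb{R}^{n}$ with second fundamental form $\Pi=g$ relative to the outward normal, so all of its principal curvatures equal $1$. Hence the assumption $\Pi_{\partial M}\geq g_{\partial B(0,1)}$ says exactly that every principal curvature of $\Sigma:=\partial M$ is $\geq 1$, and taking the trace gives the pointwise bound $H_{\partial M}\geq n-1=H_{\partial B(0,1)}$, where $H$ denotes mean curvature with respect to the outward normal.

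Next I would build the comparison manifold. Since $\partial M$ is isometric to the round unit sphere, I glue $M$ along $\Sigma$ to the exterior region $E:=\mathbb{R}^{n}\setminus B(0,1)$, forming a complete manifold $\hat{M}=M\cup_{\Sigma}E$. By construction $\hat{M}$ is exactly Euclidean outside a compact set, so it is asymptotically flat with vanishing ADM mass; it is smooth with $\scal\geq 0$ on each piece and has at worst a Lipschitz corner along $\Sigma$. The essential point is that this corner carries nonnegative distributional scalar curvature. With respect to the normal $\partial_{r}$ pointing from $M$ into $E$, the mean curvature of $\Sigma$ computed from the Euclidean side is $n-1$, whereas the hypothesis gives $H_{\partial M}\geq n-1$ from the $M$ side; thus the mean curvature does not increase as one crosses $\Sigma$ outward. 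The singular part of $\scal_{\hat{M}}$ along $\Sigma$ is $2\left(H_{\partial M}-(n-1)\right)\geq 0$ times the induced measure, which is precisely Miao's matching condition guaranteeing $\scal_{\hat{M}}\geq 0$ in the sense of distributions, with equality in the model case of flat space.

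I would then invoke the positive mass theorem in its version for metrics with corners along a hypersurface. Positivity of the mass is vacuous here since the mass is $0$, so what I actually need is the rigidity clause: an asymptotically flat $\hat{M}$ with distributionally nonnegative scalar curvature and zero ADM mass must be isometric to flat $\mathbb{R}^{n}$. This forces the corner to be genuinely absent, hence $H_{\partial M}\equiv n-1$, and identifies $M$ with the region of $\mathbb{R}^{n}$ bounded by $S^{n-1}(1)$, namely $B(0,1)$. As an alternative route that sidesteps corner regularity, one can instead use the Shi--Tam/Brown--York inequality: for $(M,g)$ with $\scal\geq 0$ and boundary isometric to a convex hypersurface of $\mathbb{R}^{n}$, the Brown--York mass $\int_{\Sigma}(H_{0}-H)\,d\sigma\geq 0$, where $H_{0}\equiv n-1$ is the Euclidean mean curvature. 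Combined with the pointwise bound $H\geq n-1$ this forces $\int_{\Sigma}(H_{0}-H)=0$, which is the equality case of that rigidity theorem and again yields $M=B(0,1)$.

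The main obstacle, in either route, is the rigidity case of the positive mass theorem for non-smooth metrics and in general dimension. Proving that zero mass forces flatness for a metric that is merely Lipschitz across $\Sigma$ requires either a mollification argument — smoothing the corner so as to preserve $\scal\geq 0$ up to a controllable error and then passing to the limit, together with the smooth rigidity statement — or a spinorial, Witten-type argument adapted to corners; one must also respect the dimensions in which the positive mass theorem and its rigidity are established (for instance $n\leq 7$, or under a spin hypothesis). By contrast, the reduction of the hypothesis to a mean-curvature bound in the first paragraph is elementary and the gluing is routine, so essentially all of the analytic weight of the theorem rests on this final rigidity input.
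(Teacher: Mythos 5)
Your proposal is correct and coincides with the argument the paper relies on: the paper does not prove this statement itself but attributes it to Miao \cite{Miao}, whose proof is exactly your strategy of gluing $M$ to $\mathbb{R}^{n}\setminus B(0,1)$ along the isometric boundary, checking that $H_{\partial M}\geq n-1$ gives nonnegative distributional scalar curvature across the corner, and invoking the rigidity case of the positive mass theorem for metrics with corners. Your reduction of $\Pi_{\partial M}\geq g_{\partial B(0,1)}$ to the mean curvature bound, and your identification of the corner regularity as the real analytic difficulty, are both accurate.
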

Min-Oo in \cite{Min-Oo} established the hyperbolic equivalent and
also proposed a version for positive curvature. 
\begin{conjecture*}[Min-Oo \cite{Min-Oo}]
 If $(M,g)$ is a Riemannian $n$-manifold with $\scal\geq n(n-1)$,
$\partial M=S^{n-1}(1)$, and $\Pi_{\partial M}\geq0$, then $M$
is a hemisphere. 
\end{conjecture*}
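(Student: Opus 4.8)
Since the hypothesis here is on \emph{scalar} rather than sectional curvature, the comparison techniques used elsewhere in this paper do not apply, and the natural line of attack is instead the spinorial method that underlies the Euclidean positive mass theorem of Witten and Min-Oo's own hyperbolic rigidity result. The plan is to exploit that the model space, the round hemisphere, carries the maximal space of \emph{real Killing spinors}: spinor fields $\psi$ with $\nabla_X\psi=\tfrac12\,X\cdot\psi$ for all $X$, where $\cdot$ denotes Clifford multiplication. First I would introduce on the spinor bundle of $(M,g)$ the modified connection $\tilde\nabla_X\psi=\nabla_X\psi-\tfrac12\,X\cdot\psi$ together with its associated first-order (modified Dirac) operator $\mathcal{D}$, and record the Friedrich--Lichnerowicz--Weitzenböck identity, which expresses $\mathcal{D}^{*}\mathcal{D}$ as $\tilde\nabla^{*}\tilde\nabla$ plus a zeroth-order term proportional to $\scal-n(n-1)$.

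Next I would integrate this identity over $M$. Because $M$ has boundary, integration by parts produces a boundary term over $\partial M$ built from the second fundamental form $\Pi_{\partial M}$ (equivalently the mean curvature $H$) paired against the restricted spinor. I would impose a chirality- or MIT-bag-type boundary condition on $\psi$ along $\partial M=\bbS^{n-1}(1)$, chosen so that the boundary value problem for $\mathcal{D}$ is elliptic and admits nontrivial solutions and so that the boundary integrand becomes tractable. Since $\scal\geq n(n-1)$ makes the bulk zeroth-order term favorable, \emph{if} the net boundary term could be shown to have the appropriate sign under $\Pi_{\partial M}\geq0$, the integrated identity would force $\tilde\nabla\psi\equiv0$ for a full space of solutions. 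A manifold carrying the maximal number of $\tilde\nabla$-parallel spinors is, by Bär's holonomy classification of real Killing spinors, locally isometric to the round sphere; combined with $\partial M=\bbS^{n-1}(1)$ and the vanishing of the boundary obstruction, this would identify $(M,g)$ with the hemisphere.

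The decisive difficulty --- and the reason this is stated as a conjecture rather than a theorem --- is precisely the boundary term. Unlike the Euclidean and hyperbolic settings, where the sign of the analogous boundary integral is pinned down by asymptotic or convexity data, here the pairing of $\Pi_{\partial M}\geq0$ against the Killing-spinor boundary values does not carry a definite sign under the stated hypotheses: the equator realizing the hemisphere has $\Pi_{\partial M}=0$, so it sits at the very boundary of the admissible range, and strictly convex deformations push the integrand the wrong way. I expect this is exactly where any such argument stalls, and closing it would require either an additional geometric hypothesis that controls the competing terms or a genuinely non-spinorial mechanism. In this sense the spherical case diverges sharply from its Euclidean and hyperbolic analogues, consistent with its far subtler status relative to the two model theorems stated above.
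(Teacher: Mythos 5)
The decisive issue here is not a technical gap in your argument but a flaw in its premise: this statement has no proof in the paper --- or anywhere --- because it is false. The paper states it as a conjecture and immediately afterwards records that Brendle, Marques, and Neves \cite{BMN} constructed counterexamples: for $n\geq 3$ there exist non-round metrics on the hemisphere with $\scal\geq n(n-1)$, boundary isometric to $S^{n-1}(1)$, and totally geodesic boundary. So no strategy, spinorial or otherwise, can close the argument. The failure of the boundary term to acquire a sign under $\Pi_{\partial M}\geq0$, which you correctly isolate as the sticking point, is not an obstacle awaiting a cleverer boundary condition; it reflects the fact that the conclusion genuinely does not follow from the hypotheses.

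Your post-mortem is nevertheless essentially the right one: the equator has $\Pi_{\partial M}=0$, so the rigidity situation sits at the very edge of the admissible range with no coercivity to exploit, and this is exactly the soft spot the counterexamples enter through. But where you write that closing the gap ``would require either an additional geometric hypothesis \dots or a genuinely non-spinorial mechanism,'' the paper makes this precise via the theorem of Hang and Wang \cite{HW}, quoted directly after the conjecture: replacing $\scal\geq n(n-1)$ by the stronger hypothesis $\Ric\geq n-1$ does force $M$ to be the hemisphere. That is the correct repaired statement. Your write-up should therefore end not with the statement's ``subtler status'' as if it were open, but with the assertion that it is refuted, citing \cite{BMN} for the counterexample and \cite{HW} for the Ricci-curvature substitute.
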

Brendle, Marques, and Neves in \cite{BMN}, however, found a counter
example to this conjecture. But just prior to this example Hang and
Wang in \cite{HW} proved the following version. 
\begin{thm*}
If the scalar curvature assumption is replaced with the stronger condition:
$\Ric\geq n-1$, then the conclusion of Min-Oo's conjecture holds. 
\end{thm*}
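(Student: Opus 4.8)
The plan is to set aside the Alexandrov comparison machinery, since the hypothesis bounds the Ricci and not the sectional curvature, and instead run a Bochner--Reilly integral argument whose goal is to produce functions satisfying Obata's equation $\Hess f=-fg$ and then invoke the associated rigidity. The natural test functions come from the boundary: writing $\partial M=\bbS^{n-1}(1)\subset\mathbb{R}^{n}$, the restrictions $\omega_{1},\dots,\omega_{n}$ of the Euclidean coordinate functions span the first nonzero eigenspace of $\partial M$, so that $\Delta_{\partial}\omega_{i}=-(n-1)\omega_{i}$, and they satisfy the two algebraic identities $\sum_{i}\omega_{i}^{2}\equiv1$ and $\sum_{i}\nabla_{\partial}\omega_{i}\otimes\nabla_{\partial}\omega_{i}=g_{\partial M}$. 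For each $i$ I would let $f_{i}$ solve the Dirichlet problem $\Delta f_{i}=-n f_{i}$ in $M$ with $f_{i}=\omega_{i}$ on $\partial M$; on the model hemisphere these are exactly the ambient coordinates $x_{i}$, which obey $\Hess x_{i}=-x_{i}g$.

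First I would apply Reilly's formula, which for $\bar f=f|_{\partial M}$ reads
\[
\int_{M}\!\left((\Delta f)^{2}-\abs{\Hess f}^{2}-\Ric(\nabla f,\nabla f)\right)=\int_{\partial M}\!\left(H(\partial_{\nu}f)^{2}+2(\partial_{\nu}f)\Delta_{\partial}\bar f+\II(\nabla_{\partial}\bar f,\nabla_{\partial}\bar f)\right),
\]
where $H=\tr\II$. Using $\Delta f_{i}=-n f_{i}$ one has $(\Delta f_{i})^{2}=n^{2}f_{i}^{2}$ together with the refined Bochner identity $\abs{\Hess f_{i}}^{2}=\abs{\Hess f_{i}+f_{i}g}^{2}+n f_{i}^{2}$, while $\Ric\ge(n-1)$. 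Substituting, integrating by parts via $\int_{M}\abs{\nabla f_{i}}^{2}=n\int_{M}f_{i}^{2}+\int_{\partial M}\omega_{i}\partial_{\nu}f_{i}$, and inserting $\Delta_{\partial}\omega_{i}=-(n-1)\omega_{i}$, the bulk $f_{i}^{2}$-terms cancel and I expect to arrive at
\[
\int_{M}\abs{\Hess f_{i}+f_{i}g}^{2}\le(n-1)\int_{\partial M}\omega_{i}\,\partial_{\nu}f_{i}-\int_{\partial M}H(\partial_{\nu}f_{i})^{2}-\int_{\partial M}\II(\nabla_{\partial}\omega_{i},\nabla_{\partial}\omega_{i}).
\]
Summing over $i$ and using the sphere identities converts $\sum_{i}\II(\nabla_{\partial}\omega_{i},\nabla_{\partial}\omega_{i})$ into $H$, so convexity $\II\ge0$ (hence $H\ge0$) renders the last two boundary integrals nonpositive.

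The hard part, and what I expect to be the main obstacle, is controlling the remaining first-order boundary term $\sum_{i}\int_{\partial M}\omega_{i}\partial_{\nu}f_{i}$: since $\int_{\partial M}\omega_{i}\partial_{\nu}f_{i}=\int_{M}(\abs{\nabla f_{i}}^{2}-n f_{i}^{2})$, its sign is not automatic, and pinning it down is exactly where the precise round metric on $\partial M$ and the convexity must genuinely enter. I would attack this through the companion eigenvalue estimate $\lambda_{1}(M)\ge n$ for the first Dirichlet eigenvalue (itself a Reilly consequence of $\Ric\ge n-1$ and $H\ge0$), applied after splitting $f_{i}$ into its harmonic extension plus a function vanishing on $\partial M$, so as to force the whole right-hand side to be $\le0$ and thus $\Hess f_{i}=-f_{i}g$ for every $i$.

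Finally, once $\Hess f_{i}=-f_{i}g$ holds, equality in the Bochner and Ricci steps simultaneously gives $\Ric(\nabla f_{i},\nabla f_{i})=(n-1)\abs{\nabla f_{i}}^{2}$ and $\II\equiv0$, so the boundary is totally geodesic as in the model. The Obata equation makes $\abs{\nabla f_{i}}^{2}+f_{i}^{2}$ constant along $\nabla f_{i}$, and flowing by the gradient reconstructs a warped-product metric $dr^{2}+\cos^{2}r\,g_{\partial M}$ away from the focal point; the Tashiro/Obata classification then identifies $M$ with a spherical region, and the round boundary $\bbS^{n-1}(1)$ together with $\II\ge0$ fixes the radius and forces this region to be exactly the closed hemisphere, as claimed.
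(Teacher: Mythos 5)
A preliminary but important point: the paper does not prove this statement. It is quoted from Hang and Wang \cite{HW} purely as background for the discussion surrounding Min-Oo's conjecture, so there is no internal proof to compare you against; the relevant comparison is with \cite{HW} itself. That said, your overall strategy --- Reilly's formula applied to solutions of $\Delta f_{i}=-nf_{i}$ with the first spherical harmonics of $\partial M=\bbS^{n-1}(1)$ as Dirichlet data, aiming at the Obata equation $\Hess f_{i}=-f_{i}g$ --- is the right kind of argument and is in the spirit of \cite{HW}. Your bookkeeping up to the displayed inequality is correct, including the tacit solvability of the Dirichlet problem: Reilly's eigenvalue theorem gives $\lambda_{1}(M)\geq n$ under $\Ric\geq n-1$ and $H\geq0$, with equality already forcing the hemisphere, so one may assume $\lambda_{1}(M)>n$.

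The gap is exactly the one you flag, and your proposed repair does not close it. You need $\sum_{i}\int_{\partial M}\omega_{i}\partial_{\nu}f_{i}=\sum_{i}\bigl(\int_{M}\abs{\nabla f_{i}}^{2}-n\int_{M}f_{i}^{2}\bigr)\leq0$, but the estimate $\lambda_{1}(M)\geq n$ says $\int_{M}\abs{\nabla v}^{2}\geq n\int_{M}v^{2}$ for $v\in H_{0}^{1}(M)$, i.e.\ it bounds the relevant quadratic form from \emph{below}, not above. Concretely, writing $f_{i}=h_{i}+v_{i}$ with $h_{i}$ the harmonic extension of $\omega_{i}$ and $v_{i}\in H_{0}^{1}(M)$, one computes $\int_{\partial M}\omega_{i}\partial_{\nu}f_{i}=\int_{M}\abs{\nabla h_{i}}^{2}-n\int_{M}h_{i}^{2}-n\int_{M}h_{i}v_{i}$; the first term is positive, the last is of indefinite sign, and neither $\lambda_{1}\geq n$ nor convexity of the boundary, as you have deployed them, forces this to be nonpositive. (What $\lambda_{1}(M)>n$ does give is that $f_{i}$ minimizes $u\mapsto\int_{M}\abs{\nabla u}^{2}-n\int_{M}u^{2}$ over $\{u|_{\partial M}=\omega_{i}\}$, so it would suffice to exhibit a single competitor with nonpositive energy --- but producing one is itself a genuine use of the hypotheses and is missing from your sketch.) Until this boundary term is controlled, $\Hess f_{i}=-f_{i}g$ does not follow; the endgame you describe (equality in Bochner, $\II\equiv0$, Obata/Tashiro, and the round totally geodesic boundary pinning down the hemisphere) is fine once it does. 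For the actual handling of this step you should consult \cite{HW} directly rather than patch it with the Dirichlet eigenvalue bound.
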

It is worth noting that this theorem is indeed extremely sensitive
to the condition that the boundary be smooth. Even with the much stronger
condition that $\sec\geq1$ the Alexandrov lens $L_{\alpha}^{n}$
is an example whose boundary is intrinsically isometric to $\bbS^{n-1}\left(1\right)$.
Note, however, that the boundary is only convex, not strictly convex
in the sense studied above. Nevertheless, it is not clear if something
like Lytchak's problem is true for Riemannian $n$-manifolds with
$\Ric\geq n-1$ and nonempty convex boundary.

The proofs employ several important techniques from the theory of
Alexandrov spaces that are explained in section 1. It is interesting
to note that some of these concepts appear to be necessary even when
the interiors of the spaces are Riemannian manifolds. Section 2 contains
some preliminary results for Alexandrov spaces with positive curvature
and maximal radius. In section 3 we offer several examples that indicate
how intricate and complex such spaces can be. Section 4 includes the
proof of the Inner Regularity Theorem. This is used to resolve Lytchak's
problem as well as another result related to the main focus of \cite{GL2}.
Section 5 is focused on establishing the Topological Regularity Theorem,
which in turn is used to establish the Weak Inner Regularity Theorem
and a complete classification of positively curved spaces with maximal
radius in low dimensions. Section 6 contains a short account of the
last theorem about Alexandrov spaces with strictly convex boundary.

It is worth pointing out that it is unknown whether the boundary of
an Alexandrov space is a priori an Alexandrov space, so some care
must be taken when working with conditions that pertain to the boundary.

The authors would like to thank Alexander Lytchak for several helpful
comments on the previous version of this paper, \cite{GPold}, and
Vitali Kapovitch for offering a very efficient proof of theorem 1.1.

\section{Alexandrov Geometry Preliminaries}

In this section we establish notation and explain the important constructions
from Alexandrov geometry that are needed. The book \cite{BBI} explains
all of the basic notions and the survey article \cite{Pe} covers
a number of more advanced topics including the gradient exponential
map.

The distance between points $p,\,q$ in a metric space $X$ is denoted
$\left|pq\right|$, or $\left|pq\right|_{X}$ if confusion is possible.
In our case this will always be an intrinsic or inner distance that
measures the length of a shortest path joining the points. For an
Alexandrov space $T_{p}X$ denotes the tangent cone at $p\in X$ and
$S_{p}X\subset T_{p}X$ the space of (unit) directions. Our geodesics
and quasi-geodesics are always parametrized by arclength. The notation
$\overset{\longrightarrow}{pq}\in S_{p}X$ refers to the direction
of a minimal geodesic from $p$ to $q$ and $\overset{\Longrightarrow}{pq}\subset S_{p}X$
is the space of all such directions.

If $Y$ is an Alexandrov space with $\curv Y\geq1$, then the \emph{curvature
$k$ cone over $Y$}, denoted $C_{k}Y$, is the cone $Y\times[0,\infty)/(Y\times\{0\})$
(with $\infty$ replaced by $\frac{\pi}{2\sqrt{k}}$ when $k>0$)
equipped with the metric where $|(y,t)(y,s)|=|t-s|$ and $|(y_{1},t)(y_{2},s)|$
is the distance in the curvature $k$ plane between the end points
of a hinge with angle $\abs{y_{1}y_{2}}_{Y}$ and side lengths $t$
and $s$. $C_{k}Y$ is an Alexandrov space with curvature bounded
below by $k$. Note that when $k>0$ it is possible to define the
cone on $Y\times[0,\frac{\pi}{\sqrt{k}})/(Y\times\{0\})$, but this
space is not complete or geodesically convex.

The \emph{spherical suspension} $\Sigma_{1}Y$ of $Y$ is simply the
double of the spherical cone. It is also the space of directions of
$C_{0}Y\times\mathbb{R}$ (equipped with the product metric) at $(c,0)$
where $c$ is the cone point of $C_{0}Y$.

In general, given two Alexandrov spaces $X$ and $Y$ with $\curv\geq1$
the metric product $C_{0}X\times C_{0}Y$ is an Alexandrov space with
non-negative curvature and its space of directions at the cone point
$(c_{1},c_{2})$ is the \emph{spherical join} $X*Y$. The distances
are given by
\[
\cos|(x_{1},r_{1},y_{1})(x_{2},r_{2},y_{2})|=\cos(r_{1})\cos(r_{2})\cos(|x_{1}x_{2}|)+\sin(r_{1})\sin(r_{2})\cos(|y_{1}y_{2}|).
\]
Note that $\Sigma_{1}Y=\left\{ 0,\pi\right\} *Y$.

\medskip{}

Let $X$ be a compact Alexandrov space with $\mathrm{curv}\geq k$.
The \emph{gradient exponential map} at $p\in X$ 
\[
\mathrm{gexp}_{p}\left(k;\cdot\right):T_{p}X\rightarrow X
\]
is defined on the tangent cone when $k\leq0$ and on the closed ball
$\bar{B}\left(o_{p},\frac{\pi}{2\sqrt{k}}\right)\subset T_{p}X$ when
$k>0$. We can identify this domain with the cone $C_{k}(S_{p}X)$.
With this new metric the gradient curves can be reparametrized so
that $\mathrm{gexp}_{p}\left(k;\cdot\right):C_{k}(S_{p}X)\rightarrow X$
becomes distance nonincreasing. 

Along a radial curve in $T_{p}X$ the gradient exponential map follows
the direction of maximal increase for the distance to $p$. Thus,
it follows minimal geodesics until they hit cut points. In general,
it moves in the direction of a point in $S_{q}X$ that is at maximal
distance from $\overset{\Longrightarrow}{qp}$ and at a rate that
is specified by both $\left|pq\right|$ and how far $\overset{\Longrightarrow}{qp}$
spreads out. Flow lines terminate at critical points $q$, i.e., when
$\overset{\Longrightarrow}{qp}$ forms a $\frac{\pi}{2}$-net in $S_{q}X$.
Finally, the gradient exponential map is distance nonincreasing and
\[
\mathrm{gexp}_{p}\left(k;\bar{B}\left(o_{p},r\right)\right)=\bar{B}\left(p,r\right)
\]
for all $r$ with the caveat that $r\leq\frac{\pi}{2\sqrt{k}}$ when
$k>0$.

We shall also be using \emph{quasi-geodesics}. They have several nice
properties. Unlike geodesics they can be defined for all time and
there is a quasi-geodesic in each direction of the space.

The left and right derivatives of a unit speed curve, if they exist,
are defined as 
\[
\dot{c}^{+}\left(t_{0}\right)=\lim_{t\rightarrow t_{0}^{+}}\overrightarrow{c\left(t_{0}\right)\,c\left(t\right)}
\]
\[
\dot{c}^{-}\left(t_{0}\right)=\lim_{t\rightarrow t_{0}^{-}}\overrightarrow{c\left(t_{0}\right)\,c\left(t\right)}
\]
Note that even for a differentiable curve in a manifold they point
in opposite directions. Quasi-geodesics always have right and left
derivatives. 

Functions similarly can have right and left derivatives when restricted
to curves. Moreover, when a distance function is evaluated on a unit
speed curve then the left and right derivatives are always defined.

Perel'man's stability theorem also gives us the following result for
Alexandrov spaces with boundary.
\begin{thm}
Consider a compact Alexandrov space $X$ with $\partial X\neq\emptyset$.
If the distance $r\left(x\right)=\abs{x\partial X}$ has a unique
maximum at a soul $s\in X$ and no other critical points, then $S_{s}X$
and $\partial X$ are homeomorphic.
\end{thm}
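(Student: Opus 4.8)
The plan is to combine the gradient flow of $r$ with the local conical structure furnished by Perel'man's stability theorem, the point being to realize $X$ as a topological cone over $\partial X$ with vertex at the soul $s$, and then to read off $S_{s}X\cong\partial X$ by comparing this cone with the intrinsic cone structure of $X$ at $s$. Two standard facts will drive the argument. First, the \emph{local conical structure}: for all sufficiently small $\varepsilon>0$ the metric sphere $\partial\overline{B}(s,\varepsilon)$ is homeomorphic to $S_{s}X$, and $\overline{B}(s,\varepsilon)$ is homeomorphic to the cone $C(S_{s}X)$ with cone point $s$. Second, Perel'man's isotopy (fibration) lemma: on a compact region where $r$ has no critical points, $r$ is a trivial fiber bundle over the corresponding interval, so its level sets there are mutually homeomorphic and the region is a product.

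First I would fix regular values $0<\delta<c<\max r$ and apply the isotopy lemma to $r$ on $\{\delta\leq r\leq c\}$: since the only critical point of $r$ is $s$, where $r=\max r$, this region contains no critical points, whence $r^{-1}(\delta)\cong r^{-1}(c)$. To reach the boundary I would use that $r$ is regular along $\partial X$ — moving into the interior strictly increases $r$, with the gradient pointing inward — so the gradient flow of $r$ produces a collar $\{0\leq r\leq\delta\}\cong\partial X\times[0,\delta]$, giving $r^{-1}(c)\cong\partial X$ for every regular level $c$. Running this over the whole range $(0,\max r)$ exhibits $X\setminus\{s\}\cong\partial X\times(0,\max r)$ with flow lines all limiting to $s$ (their $\omega$-limit must be a critical point of $r$, and $s$ is the only one), so that $X$ is homeomorphic to the cone $C(\partial X)$ with $s$ as cone point.

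For the other end, I would choose $c$ so close to $\max r$ that the super-level set $\{r\geq c\}$ lies inside a conical neighborhood $\overline{B}(s,\varepsilon)$. Then $\{r\geq c\}$ is a compact neighborhood of $s$ whose only critical point of $r$ is $s$ itself, and inside $C(S_{s}X)$ it is a neighborhood of the vertex that retracts to it; its boundary level $r^{-1}(c)$ is therefore a sphere homeomorphic to the link $S_{s}X$. Equivalently, the link of a cone point is a homeomorphism invariant of the pointed space $(X,s)$, so the two presentations $X\cong C(\partial X)$ and $X\cong C(S_{s}X)$ force $\partial X\cong S_{s}X$. Chaining $\partial X\cong r^{-1}(c)\cong S_{s}X$ completes the argument.

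The main obstacle I anticipate is the careful handling of the two ends, where the clean Morse/isotopy picture degenerates. At the boundary end one must justify that $\partial X$ is a regular level, i.e. that distance-to-boundary genuinely yields a collar; the most robust route is to pass to the double $D(X)$, an Alexandrov space without boundary on which $r$ extends and along which $\partial X$ becomes an honestly regular level set, and then to descend through the reflection. At the soul end the delicate step is to match the super-level sets $\{r\geq c\}$ with the metric balls $\overline{B}(s,\varepsilon)$ supplied by the stability theorem — that is, to see that a small level set of $r$ about its maximum is the same sphere, up to homeomorphism, as a small metric sphere about $s$ — since the gradient flow of $r$ need not be radial for the distance $\abs{s\cdot}$. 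Once these two local identifications are secured, the product structure in between is routine.
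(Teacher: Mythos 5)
The boundary half of your argument is fine and is essentially what the paper does: $r$ has no critical points on $X\setminus\{s\}$, so Perel'man's fibration theorem gives $r^{-1}(c)\cong\partial X$ for every level $0<c<\max r$. The gap is at the soul. Your concluding step rests on the assertion that ``the link of a cone point is a homeomorphism invariant of the pointed space $(X,s)$,'' and this is false: by the double suspension theorem the open cone over $\Sigma P$ ($P$ the Poincar\'e homology sphere) is homeomorphic to $\mathbb{R}^{5}$, i.e.\ to the open cone over $S^{4}$, with vertices corresponding, even though $\Sigma P$ is not homeomorphic to $S^{4}$. For the same reason your intermediate claim --- that because $\{r\geq c\}$ is a compact neighborhood of $s$ contained in a conical neighborhood $\overline{B}(s,\varepsilon)\cong C(S_{s}X)$ and retracting to $s$, its boundary $r^{-1}(c)$ must be a copy of $S_{s}X$ --- does not follow; inside $C(S^{4})\cong C(\Sigma P)$ there are compact neighborhoods of the vertex, retracting to it, whose boundaries are $\Sigma P$ rather than $S^{4}$. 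Identifying a level set of $r$ near its maximum with $S_{s}X$ is precisely the content of the theorem; you have correctly flagged this as the delicate step, but the patch you propose is based on a false principle.

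The paper closes exactly this gap with an interpolation trick due to Kapovitch. From \cite[lemmas 4.7 and 5.2]{K1} one obtains a strictly concave function $g$ near $s$, with unique maximum at $s$, whose level sets \emph{are} known to be homeomorphic to $S_{s}X$ --- this is where the space of directions genuinely enters, through the construction of $g$, not through abstract cone structure. One then sets $r_{\epsilon}=(1-\epsilon)r+\epsilon g$, observes that each $r_{\epsilon}$ is strictly concave near $s$ with unique maximum at $s$ (hence no other critical points there), and applies the fibration theorem to the parametrized map $(x,\epsilon)\mapsto(r_{\epsilon}(x),\epsilon)$ to conclude that the level sets of $r_{0}=r$ and $r_{1}=g$ near $s$ are homeomorphic. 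Some such direct comparison of the level sets of $r$ with those of a function whose level sets are already identified with $S_{s}X$ is the missing ingredient; a purely topological cone-versus-cone argument cannot supply it.
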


\begin{proof}[Proof by V. Kapovitch]
From \cite[lemmas 4.7 and 5.2]{K1} one obtains a strictly concave
function $g$ near $s$ which has unique maxium at $s$ and whose
rescaled level sets are homeomorphic to $S_{s}X$. On the other hand
by Perel'man's fibration theorem (see \cite{K2} and \cite[section 8 property 7]{Pe})
we have that the level sets of $r$ that are near the soul are homeomorphic
to $\partial X$. The interpolated functions $r_{\epsilon}=\left(1-\epsilon\right)r+\epsilon g$
are also strictly concave near $s$ with a unique maximum at $s$
for all $\epsilon\in\left[0,1\right]$. This is a continuous family
in $\epsilon$ so by Perel'man's fibration theorem applied to $\left(x,\epsilon\right)\mapsto\left(r_{\epsilon}\left(x\right),\epsilon\right)$
the level sets are homeomorphic for all $\epsilon$. Considering $\epsilon=0,1$
we see that the level sets of $r$ near $s$ are homeomorphic to $S_{s}X$. 
\end{proof}

\section{Basic structure}

Except for section 6 we will only consider Alexandrov spaces $X$
with curvature $\geq1$. In this section we list some basic properties
for such spaces when they have $\rad\geq\pitwo$.
\begin{prop}
\label{prop:rad_susp}If $\mathrm{rad}X\geq\frac{\pi}{2}$ and $\mathrm{curv}X\geq1$,
then either $\mathrm{rad}S_{x}X\geq\frac{\pi}{2}$ for all $x\in X$
or $X=\Sigma_{1}S_{p}X$, where $\mathrm{rad}S_{p}X<\frac{\pi}{2}$. 
\end{prop}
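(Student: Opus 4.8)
The plan is to establish the dichotomy by assuming the first alternative fails and producing the second. So suppose $\rad S_{p}X<\pitwo$ for some $p$, and let $v\in S_{p}X$ realize this radius, i.e. $\angle(v,w)\le R<\pitwo$ for all $w\in S_{p}X$. My goal is to produce a point $p^{+}$ with $|pp^{+}|=\pi$; once $\diam X=\pi$ is achieved with $p$ as one of the two extremal points, the basic suspension rigidity recalled in the introduction (the equality case of $\diam\le\pi$) identifies $X$ with the spherical suspension of the space of directions at that point, i.e. $X=\Sigma_{1}S_{p}X$, and the standing hypothesis $\rad S_{p}X<\pitwo$ is exactly the asserted property of the base.

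For the candidate antipode, take a quasi-geodesic $\gamma$ emanating from $p$ in the direction $v$ (one exists in every direction) and set $p^{+}=\gamma(\pi)$. The comparison property of quasi-geodesics in a space with $\curv\ge1$ gives, for every $x$ and every $t$, the hinge estimate $\cos|\gamma(t)x|\ge\cos t\cos|px|+\sin t\sin|px|\cos\angle(v,\overrightarrow{px})$; evaluating at $t=\pi$ makes the angle term drop out and yields $\cos|p^{+}x|\ge-\cos|px|$, that is $|px|+|p^{+}x|\le\pi$ for all $x\in X$. This inequality is automatic and uses nothing about the radius of $S_{p}X$; in particular it only gives the trivial bound $|pp^{+}|\le\pi$.

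The entire content therefore lies in the reverse inequality $|px|+|p^{+}x|\ge\pi$, equivalently (taking $x=p^{+}$) in showing $|pp^{+}|=\pi$; this is the step I expect to be the main obstacle, and it is precisely where $\rad S_{p}X<\pitwo$ must be used. The essential structural consequence of the hypothesis is that any two directions at $p$ satisfy $\angle(w_{1},w_{2})\le\angle(w_{1},v)+\angle(v,w_{2})\le 2R<\pi$, so $\diam S_{p}X<\pi$ and no geodesic passes through $p$: the point $p$ is extremal. I would then combine this extremality with the global condition $\rad X\ge\pitwo$—which guarantees a point at distance $\ge\pitwo$ from every point—to upgrade the farthest distance from $p$ from $\ge\pitwo$ to exactly $\pi$, the rigidity coming from the equality case of Toponogov's hinge comparison, since the clustering of directions at $p$ forces every comparison hinge based at $p$ to be extremal and leaves no slack. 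Establishing this rigid antipodal relation $|px|+|p^{+}x|\equiv\pi$ is the heart of the argument; with it in hand the metric is that of a spherical suspension with poles $p,p^{+}$, so $X=\Sigma_{1}S_{p}X$ with $\rad S_{p}X<\pitwo$, which completes the proof.
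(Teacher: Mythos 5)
Your setup (a quasi-geodesic $\gamma$ from $p$ in the direction $v$ of the center of $S_{p}X$, the comparison $\cos\abs{\gamma(t)x}\ge\cos t\cos\abs{px}+\sin t\sin\abs{px}\cos\angle(v,\overrightarrow{px})$, and the reduction to suspension rigidity once an antipode of $p$ is produced) matches the paper's opening moves, but the step you yourself flag as ``the heart of the argument'' is left unproven, and the mechanism you propose for it does not work. There is no rigidity statement to the effect that the clustering of directions at $p$ ``forces every comparison hinge based at $p$ to be extremal'': the hypothesis $\rad S_{p}X\le R<\pitwo$ gives an inequality at such hinges, not equality, and by itself says nothing about $\abs{pp^{+}}$. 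Your own computation at $t=\pi$ yields only $\abs{px}+\abs{p^{+}x}\le\pi$, and applying $\rad X\ge\pitwo$ at the point $p$ produces only some $x$ with $\abs{px}\ge\pitwo$, which is far from the needed $\abs{px}=\pi$.

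The missing idea is to apply the radius hypothesis not at $p$ but at points of $\gamma$ near the parameter $\pitwo$. Evaluating your comparison at $t=\pitwo$ with $q=\gamma(\pitwo)$ gives $\cos\abs{qx}\ge\sin\abs{px}\cos\angle(v,\overrightarrow{px})>0$, i.e.\ $\abs{qx}<\pitwo$, for every $x$ with $\abs{px}\notin\{0,\pi\}$. So either some $x$ satisfies $\abs{px}=\pi$ (and you conclude by diameter rigidity as you intended), or $p$ is the unique point at distance $\ge\pitwo$ from $q$. In the latter case one backs off to $\gamma(t)$ with $t<\pitwo$ close to $\pitwo$: compactness of $X-B(p,t)$ upgrades the strict inequality $\abs{qx}<\pitwo$ to a uniform bound $\abs{\gamma(t)x}\le\pitwo-\delta(t)$ on that set, while for $x\in\bar{B}(p,t)$ the hinge at $p$ with angle $\le R<\pitwo$ and sides $\le t<\pitwo$ gives $\cos\abs{\gamma(t)x}\ge\cos t\cos\abs{px}\ge\cos^{2}t>0$. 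Hence $X\subset B(\gamma(t),\pitwo)$, contradicting $\rad X\ge\pitwo$. Without some version of this second half, your argument establishes only the trivial inequality and the proof is incomplete.
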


\begin{proof}
Assume $\rad S_{p}X<\frac{\pi}{2}$ and select a quasi-geodesic $c:\left[0,\pi\right]\rightarrow X$
such that $c\left(0\right)=p$ and $\angle\left(\dot{c}\left(0\right),w\right)<\frac{\pi}{2}$
for all $w\in S_{p}X$. Let $q=c\left(\frac{\pi}{2}\right)$. Then
$\abs{xq}<\frac{\pi}{2}$ unless $\abs{px}=0,\,\pi$. In the latter
case we are finished. So if no such $x$ exists then $p$ is the one
and only point at distance $\frac{\pi}{2}$ from $q$. This shows
that $X-B\left(p,t\right)\subset\bar{B}\left(c\left(t\right),\frac{\pi}{2}-\delta\left(t\right)\right)$
for $t<\frac{\pi}{2}$ and $t$ near $\frac{\pi}{2}$. As $c:\left[0,\frac{\pi}{2}\right]\rightarrow X$
is now a geodesic and $\angle\left(\dot{c}\left(0\right),w\right)<\frac{\pi}{2}$,
for all $w\in S_{p}X$, it follows from Toponogov comparison that
$\bar{B}\left(p,t\right)\subset\bar{B}\left(c\left(t\right),\frac{\pi}{2}-\epsilon\left(t\right)\right)$
for $t<\frac{\pi}{2}$. This shows that $X\subset B\left(c\left(t\right),\frac{\pi}{2}\right)$
for some $t<\frac{\pi}{2}$ and contradicts that $\mathrm{rad}X\geq\frac{\pi}{2}$. 
\end{proof}
\medskip{}

A set $C\subset Y$ in an Alexandrov space is\emph{ $\pi$-convex}
if any geodesic of length $<\pi$ and whose end points lie in $C$
is entirely contained in $C$. This notion of convexity should not
be confused with the definition of \emph{boundary convexity} discussed
in the introduction and section 6. 

\medskip{}

In case $X$ has boundary it will have a unique soul $s\in X$ at
maximal distance from the boundary and well known comparison arguments
imply that $X\subset\bar{B}\left(s,\frac{\pi}{2}\right)$ (see also
\cite{Pe}). This shows that $\mathrm{gexp}_{s}\left(1;\cdot\right):C_{1}\left(S_{s}X\right)\rightarrow X$
is onto. Since this map is also distance nonincreasing it follows
that $\rad S_{s}X\geq\frac{\pi}{2}$ when $\rad X=\pitwo$ (see also
proposition \ref{prop:rad_susp}).

In the extreme case where $\rad X=\frac{\pi}{2}$ we define the \emph{edge}
of $X$ to be the dual set to $s$:
\[
E=\left\{ x\in X\mid\abs{xs}=\frac{\pi}{2}\right\} .
\]
It has the following important property.
\begin{prop}
If $X$ has boundary and $\rad X=\pitwo$, then $s$ is at maximal
distance from $E$.
\end{prop}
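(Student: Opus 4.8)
The plan is to reformulate the claim as the inclusion $X\subset\bar{B}\left(E,\pitwo\right)$, i.e. $\abs{xE}\le\pitwo$ for every $x$, with $s$ realizing equality. First note $E\neq\emptyset$: since $X\subset\bar{B}\left(s,\pitwo\right)$ we have $\max_{q}\abs{sq}\le\pitwo$, while $\pitwo=\rad X\le\max_{q}\abs{sq}$, so $\max_{q}\abs{sq}=\pitwo$ is attained on $E$. As every point of $E$ lies at distance exactly $\pitwo$ from $s$, we get $\abs{sE}=\pitwo$; hence it suffices to prove $\abs{xE}\le\pitwo$ for all $x$, which exhibits $s$ as a point of maximal distance from $E$.

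The main mechanism I would use is the gradient exponential map. Given $x\in X$, surjectivity of $\gexp_{s}\left(1;\cdot\right):C_{1}\left(S_{s}X\right)\to X$ lets me write $x=\gexp_{s}\left(1;v,\rho\right)$ with $\rho=\abs{sx}\le\pitwo$. Pushing radially out to the maximal radius, set $e_{v}=\gexp_{s}\left(1;v,\pitwo\right)$. Because $\gexp_{s}\left(1;\cdot\right)$ is distance nonincreasing on the cone and the two points $\left(v,\rho\right),\left(v,\pitwo\right)$ lie on one radial ray, $\abs{x\,e_{v}}\le\abs{\left(v,\rho\right)\left(v,\pitwo\right)}_{C_{1}\left(S_{s}X\right)}=\pitwo-\rho\le\pitwo$. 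Thus, \emph{provided} $e_{v}\in E$, we obtain $\abs{xE}\le\abs{x\,e_{v}}\le\pitwo-\rho$; by uniqueness of the soul this is strict unless $x=s$, so $s$ is in fact the unique point of maximal distance $\pitwo$ from $E$. (One checks on the hemisphere that $\abs{xE}=\pitwo-\abs{sx}$ exactly, so this route is sharp.)

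Everything therefore reduces to the single assertion $e_{v}=\gexp_{s}\left(1;v,\pitwo\right)\in E$, i.e. that the radial gradient curve in each direction $v$ actually attains the maximal distance $\pitwo$ from $s$. Equivalently, via the law of cosines for a hinge at $s$ with sides $\pitwo=\abs{se}$ and $\abs{sx}$ (Toponogov gives $\cos\abs{xe}\ge\sin\abs{sx}\cos\angle\left(\overrightarrow{sx},\overrightarrow{se}\right)$, which is $\ge0$ as soon as the angle is $\le\pitwo$), it suffices to know that the edge directions $D_{E}=\left\{ \overrightarrow{se}:e\in E\right\}$ form a $\pitwo$-net in $S_{s}X$ and to select $e$ with $\angle\left(\overrightarrow{sx},\overrightarrow{se}\right)\le\pitwo$. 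I expect this net property to be the main obstacle: a priori the radial gradient curve could terminate at an interior critical point $q$ of $x\mapsto\abs{sx}$ with $\abs{sq}<\pitwo$, never reaching $E$. The plan to exclude this is to use that $s$ is the soul together with $\rad S_{s}X\ge\pitwo$ and the suspension dichotomy of Proposition \ref{prop:rad_susp}. In the non-suspension alternative $\rad S_{y}X\ge\pitwo$ holds at every $y$, and at a putative interior critical point $q$ (where $\overset{\Longrightarrow}{qs}$ is a $\pitwo$-net in $S_{q}X$) I would feed this into a hinge/quasi-geodesic comparison to push $d_{s}$ past $q$, contradicting that the flow stalls below the maximum; the suspension case $X=\Sigma_{1}S_{p}X$ is handled directly, since there a suspension point lies in $E$ and the $\pitwo$-net property is immediate. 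Finally, density of geodesic directions in $S_{s}X$ and continuity of $\abs{\cdot\,E}$ upgrade the conclusion from a dense set of directions to all of $S_{s}X$.
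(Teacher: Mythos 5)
Your first step --- that criticality of $s$ for the distance to $E$ (equivalently, that the directions $\overrightarrow{se}$, $e\in E$, form a $\pitwo$-net in $S_{s}X$) implies $\abs{xE}\leq\pitwo$ for every $x$ via the Toponogov hinge estimate --- is exactly the first half of the paper's argument and is correct. The gap is in how you propose to obtain that net property. Your central mechanism, that $e_{v}=\gexp_{s}\left(1;v,\pitwo\right)$ lies in $E$ for every direction $v$, is false: gradient curves of $d_{s}=\abs{\cdot\,s}$ terminate at critical points of $d_{s}$, and such critical points at distance $<\pitwo$ from $s$ are unavoidable in the very spaces the proposition is about. In the lens $L_{\alpha}^{2}$ every boundary point other than the two vertices is critical for $d_{s}$ at distance $\frac{\alpha}{2}$ from $s$, so the radial gradient curve toward the boundary stalls there and never reaches $E$ (which consists of the two vertices only). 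Your fallback --- excluding such critical points using $\rad S_{y}X\geq\pitwo$ for all $y$ from Proposition \ref{prop:rad_susp} --- also cannot work: in a join $X=A*\left[0,\beta\right]$ with $\rad A\geq\pitwo$ and $\beta<\pitwo$ the endpoint $q$ of the interval factor is critical for $d_{s}$ at distance $\beta<\pitwo$ from $s$ (its space of directions is the cone $C_{1}A$, of radius $\pitwo$, with $\overrightarrow{qs}$ at the apex, hence a $\pitwo$-net), even though $\rad S_{y}X\geq\pitwo$ everywhere. So no comparison argument can ``push $d_{s}$ past $q$''; the flow genuinely stalls below level $\pitwo$.

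The correct route, and the one the paper takes, does not go through the gradient flow of $d_{s}$ at all: one proves the net property by contradiction using the maximal radius hypothesis directly. If some direction $\dot{c}\left(0\right)\in S_{s}X$ makes angle $<\pitwo-\epsilon$ with every $\overrightarrow{se}$, $e\in E$, then for small $t$ the point $c\left(t\right)$ is at distance $<\pitwo$ from every point of $X$: points in a neighborhood $U$ of $E$ are handled by the hinge inequality $\cos\abs{c\left(t\right)y}\geq\cos t\cos\abs{sy}+\sin t\sin\abs{sy}\cos\left(\pitwo-\epsilon\right)>0$, while points outside $U$ are already at distance $\leq\pitwo-\delta$ from $s$ and hence at distance $\leq\pitwo-\frac{\delta}{2}$ from $c\left(t\right)$. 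This contradicts $\rad X=\pitwo$. I would also drop your parenthetical claim that $s$ is the \emph{unique} point at maximal distance from $E$; uniqueness of the soul refers to the distance to $\partial X$, not to $E$, and is not part of what the proposition asserts.
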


\begin{proof}
We first show that when $s$ is a critical point for the distance
to $E$, then $s$ is in fact at maximal distance from $E$. To see
this select $x\in X$ and a geodesic direction $\overrightarrow{sx}\in S_{s}X$.
If $s$ is critical for $E$, then there is a geodesic direction $\overrightarrow{se}\in S_{s}X$
with $e\in E$ such that $\angle\left(\overrightarrow{sx},\overrightarrow{se}\right)\leq\pitwo$.
Toponogov comparison then implies that $\abs{xe}\leq\pitwo$ as $\abs{xs},\abs{es}\leq\pitwo$.

Thus we need to show that when $s$ is not critical, then $\rad X<\pitwo$.
For that choose a unit speed geodesic $c\left(t\right)$ such that
$\dot{c}\left(0\right)\in S_{s}X$ forms an angle $<\pitwo-\epsilon$
with every direction $\overrightarrow{se}\in S_{s}X$ and $e\in E$.
We can now find an open neighborhood $U\supset E$ such that the directions
$U^{\prime}\subset S_{s}X$ for minimal geodesics from $s$ to points
in $U$ also form an angle $<\pitwo-\epsilon$ with $\dot{c}\left(0\right)$.
By compactness there exists $\delta>0$ such that $\abs{sz}\leq\pitwo-\delta$
for all $z\notin U$. So we can in addition fix $t$ such that $\abs{c\left(t\right)z}\leq\pitwo-\frac{\delta}{2}$
for all $z\notin U$. On the other hand if $\overrightarrow{sy}\in S_{x}X$
denotes a direction to a $y\in U$, then Toponogov comparison implies
\[
\cos\abs{c\left(t\right)y}\geq\cos t\cos\abs{sy}+\sin t\sin\abs{sy}\cos\left(\pitwo-\epsilon\right).
\]
Here the left-hand side is uniformly positive for any fixed small
$t$, so there is an $\epsilon_{1}>0$, such that $\abs{c\left(t\right)y}\leq\pitwo-\epsilon_{1}$
for all $y\in U$. This shows that $\rad X<\pitwo$.
\end{proof}
We can now define the \emph{spine} as the dual set to $E$ by
\[
S=\left\{ x\in X\mid\abs{xE}=\frac{\pi}{2}\right\} .
\]
By standard comparison, the complement of any open $\pitwo$ ball
in an Alexandrov space $X$ with $\curv X\geq1$ is $\pi$-convex
(even relative to $\partial X$ if contained in there, since geodesics
in the boundary are quasi-geodesics in $X$). Moreover, another simple
comparison argument shows that the distance functions to $E$ and
to $S$ have no critical points in $X-(E\cup S)$. In summary
\begin{prop}
\label{prop:dual} Assume that $X$ has $\curv\geq1$, nonempty boundary
and $\rad X=\pitwo$. It follows that 
\begin{enumerate}
\item $E\subset X$ is closed and $\pi$-convex in both $X$ and $\partial X$. 
\item $S$ is closed, $\pi$-convex in $X$, and $\rad S<\frac{\pi}{2}$. 
\item $E$, respectively $S$, is a deformation retract of $X-S$, respectively
of $X-E$ 
\item $E$, respectively $S\cap\partial X$, is a deformation retract of
$\partial X-S\cap\partial X$, respectively of $\partial X-E$ 
\end{enumerate}
Concrete deformations are provided by the gradient flows for the distance
functions to $S$ and $E$ respectively, preserving the extremal set
$\partial X$. 
\end{prop}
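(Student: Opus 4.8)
The plan is to read everything off two containments that are already recorded: $X\subseteq\bar B\!\left(s,\pitwo\right)$ (noted in the text) and, from the previous proposition, that $s$ realizes the maximal distance $\pitwo$ to $E$, so $X\subseteq\bar B\!\left(E,\pitwo\right)$. These identify the dual sets as complements of open $\pitwo$-balls: $E=X\setminus B\!\left(s,\pitwo\right)$ and $S=X\setminus B\!\left(E,\pitwo\right)$. Closedness is then immediate, and $\pi$-convexity of $E$ and $S$ in $X$ is exactly the quoted fact that the complement of an open $\pitwo$-ball is $\pi$-convex; the version relative to $\partial X$ follows by running the same comparison inside $\partial X$, which is legitimate because geodesics of $\partial X$ are quasi-geodesics of $X$. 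For the radius bound in (2) I would first record the mutual duality $E=\{x:\abs{xS}=\pitwo\}$: the inclusion $\supseteq$ uses $s\in S$ together with $\abs{xs}\le\pitwo$, while $\subseteq$ uses that every $z\in S$ has $\abs{ze}\ge\pitwo$ for $e\in E$. Since $x\in S$ forces $\abs{xE}=\pitwo>0$, we get $E\cap S=\emptyset$; as $z\mapsto\abs{zs}$ is continuous, equals $\pitwo$ only on $E$, and $S$ is compact, this gives $\max_{z\in S}\abs{zs}<\pitwo$, and $\pi$-convexity makes this maximum intrinsic, so $\rad S<\pitwo$.

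For the deformation retractions I would work with the distance functions $f_E=\abs{\cdot\,E}$ and $f_S=\abs{\cdot\,S}$. By the two containments both are bounded by $\pitwo$, and the mutual duality shows $f_S$ attains its maximum $\pitwo$ exactly on $E$ and $f_E$ exactly on $S$, with zero sets $S$ and $E$ respectively. The quoted fact that $f_E$ and $f_S$ have no critical points on $X-(E\cup S)$ then says each function is regular off its zero and maximum sets, so the forward gradient flow of $f_S$ retracts $X-S$ onto $E$ and that of $f_E$ retracts $X-E$ onto $S$, which is (3). For (4) I would invoke that $\partial X$ is an extremal subset and hence invariant under these gradient flows; restricting the two flows to $\partial X$ produces deformation retractions of $\partial X-(S\cap\partial X)$ onto $E\cap\partial X$ and of $\partial X-E$ onto $S\cap\partial X$. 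The concrete deformations promised in the statement are precisely these restricted gradient flows, which preserve $\partial X$ as required.

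The hard part will be the single point that makes the targets in (4) read as written, namely $E\subseteq\partial X$, so that $E\cap\partial X=E$ and $E$ genuinely sits inside $\partial X-(S\cap\partial X)$. This is perhaps surprising, since $E$ can be much smaller than $\partial X$ (the vertices of an Alexandrov lens $L_\alpha^n$ form $E$, while $\partial L_\alpha^n$ is a full sphere), and it is the assertion I expect to cost real work. My plan is to argue it by contradiction: a point $x\in E\cap\mathrm{int}\,X$ is a global maximum of $\abs{\cdot\,s}$, hence a critical point whose directions $\overset{\Longrightarrow}{xs}$ form a $\pitwo$-net in the boundaryless space $S_xX$; I would try to combine this criticality with the facts that $s$ is the \emph{unique} point at maximal distance from $\partial X$ and that $\rad X=\pitwo$ to manufacture a point $c$ with $X\subset B\!\left(c,\pitwo\right)$ (an open ball), which would force $\rad X<\pitwo$ and contradict the hypothesis. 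Equivalently, one can try to show that the radial gradient curves of $\abs{\cdot\,s}$ issuing from $s$ reach $E$ only after meeting $\partial X$; since $\partial X$ is flow-invariant, this would place $E$ in $\partial X$. Pinning down this last step cleanly is where I expect the difficulty to lie, and once it is in hand the remaining assertions follow from the gradient-flow picture above.
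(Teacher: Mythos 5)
Your proof follows the same route the paper itself takes (the paper gives no formal proof of this proposition beyond the two comparison remarks immediately preceding it, and your write-up is those remarks fleshed out): identify $E=X-B\left(s,\pitwo\right)$ and $S=X-B\left(E,\pitwo\right)$ as complements of open $\pitwo$-balls to get closedness and $\pi$-convexity, deduce $\rad S<\pitwo$ from $E\cap S=\emptyset$ together with compactness and $\pi$-convexity of $S$, and obtain the retractions from the gradient flows of the two distance functions, which preserve the extremal set $\partial X$. All of that is correct as written.

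The one point you leave open, $E\subset\partial X$, is genuinely needed for parts (1) and (4) as stated, and it is a real gap in your submission; moreover the route you sketch for it is unlikely to close cleanly, because $\rad X=\pitwo$ is not really the hypothesis an interior point of $E$ would violate. The statement follows instead from the same concavity estimate the paper uses in the proofs of propositions \ref{prop:dimS=00003D0} and \ref{prop:E}. Let $x\in E$ and let $c:\left[0,\pitwo\right]\rightarrow X$ be a unit speed minimal geodesic from $s$ to $x$. Put $\phi\left(t\right)=\sin\abs{c\left(t\right)\partial X}$. By \cite[theorem 3.3.1]{Pe} one has $\ddot{\phi}+\phi\leq0$ in the support sense, while $\phi\left(0\right)=\sin\abs{s\,\partial X}\geq0$ and $\dot{\phi}^{+}\left(0\right)\leq0$ since $\abs{\cdot\,\partial X}$ is maximal at $s$ and $\abs{s\,\partial X}\leq\pitwo$. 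Hence $\phi\left(t\right)\leq\phi\left(0\right)\cos t$, so $\phi\left(\pitwo\right)\leq0$; as $\phi\geq0$ this forces $\abs{x\,\partial X}=0$, i.e.\ $x\in\partial X$. (The same computation gives $\abs{xs}+\abs{x\,\partial X}\leq\pitwo$ for every $x\in X$.) With this inserted, the rest of your argument goes through exactly as you describe.
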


We will now see that $E\ne\partial X$ if and only if $\dim S>0$,
in which case $\partial S\ne\emptyset$.
\begin{lem}
\label{lem:footpoint} Assume that $X$ has $\curv\geq1$, nonempty
boundary, and $\rad X=\pitwo$. If $E\neq\partial X$, $x\in S$,
and $q\in\partial X$ is closest to $x$, then $q\in S$. In particular,
$\partial S\neq\emptyset$. 
\end{lem}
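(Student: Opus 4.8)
The plan is to show directly that the foot point $q$ lies at distance at least $\pitwo$ from every point of the edge $E$; since the preceding proposition shows that $s$ is at maximal distance from $E$ and every $e\in E$ has $\abs{se}=\pitwo$, the maximum of the function $x\mapsto\abs{xE}$ equals $\pitwo$, so $x\in S$ holds exactly when $\abs{xe}\ge\pitwo$ for all $e\in E$. I may assume $x\in\mathrm{int}X$, since if $x\in\partial X$ then $q=x\in S$ and there is nothing to prove. The key quantitative input is that $\rho:=\abs{xq}=\abs{x\partial X}$ satisfies $\rho<\pitwo$: indeed $\rho\le\abs{s\partial X}$, and if $\abs{s\partial X}=\pitwo$ then, since $X\subset\bar{B}(s,\pitwo)$, every boundary point would lie at distance exactly $\pitwo$ from $s$, giving $\partial X\subset E$ and hence $E=\partial X$ (using $E\subset\partial X$ from Proposition \ref{prop:dual}(1)), contrary to hypothesis.

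Next comes the heart of the argument, combining extremality with a reversed hinge estimate. Fix $e\in E$. Since $\partial X$ is an extremal subset and $q$ is a closest point of $\partial X$ to the interior point $x$, the point $q$ is a critical point of $\mathrm{dist}_{x}$; that is, every direction $\xi\in S_{q}X$ makes angle $\le\pitwo$ with the direction of some minimal geodesic from $q$ to $x$. Choosing $\xi=\overrightarrow{qe}$ and the corresponding geodesic $qx$ gives $\angle(\overrightarrow{qx},\overrightarrow{qe})\le\pitwo$. Now suppose $\abs{qe}<\pitwo$. Toponogov's hinge comparison for $\curv\ge1$ at the vertex $q$ yields
\[
\cos\abs{xe}\ \ge\ \cos\rho\,\cos\abs{qe}+\sin\rho\,\sin\abs{qe}\,\cos\angle(\overrightarrow{qx},\overrightarrow{qe}).
\]
Since $x\in S$ we have $\abs{xe}\ge\pitwo$, so the left side is $\le0$; but with $0<\rho<\pitwo$ and $0<\abs{qe}<\pitwo$ the first term on the right is strictly positive, forcing $\cos\angle(\overrightarrow{qx},\overrightarrow{qe})<0$, i.e. $\angle(\overrightarrow{qx},\overrightarrow{qe})>\pitwo$. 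This contradicts the extremality bound. Hence $\abs{qe}\ge\pitwo$ for every $e\in E$, so $\abs{qE}\ge\pitwo$; as $\pitwo$ is the maximal value of $x\mapsto\abs{xE}$, we conclude $\abs{qE}=\pitwo$ and $q\in S$.

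Finally, $\partial S\ne\emptyset$ follows by applying the foot-point statement to $x=s$. Here $q\in S\cap\partial X$, and $q\ne s$ because $\abs{sq}=\abs{s\partial X}>0$ (the interior of $X$ is nonempty). Since $S$ is $\pi$-convex by Proposition \ref{prop:dual}(2) and $\abs{sq}=\rho<\pitwo<\pi$, the minimal geodesic from $q$ to $s$ lies in $S$; it leaves $\partial X$ immediately and enters $\mathrm{int}X$, so its initial direction $\overrightarrow{qs}\in S_{q}S$ is an interior direction of $S_{q}X$. Thus $S_{q}S$ is not contained in the boundary of $S_{q}X$, while $q\in\partial X$; a boundaryless convex subcone of $T_{q}X$ must lie in $T_{q}\partial X$, so $q$ cannot be an interior point of $S$. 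Hence $q\in\partial S$ and $\partial S\ne\emptyset$.

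The step I expect to be the main obstacle is the use of extremality: the naive first–variation argument only controls the angle between $\overrightarrow{qx}$ and directions \emph{tangent to} $\partial X$, whereas the minimal geodesic from $q$ to $e$ may leave $\partial X$. The fact that $\partial X$ is an extremal subset is precisely what upgrades this to the bound $\angle(\xi,\overrightarrow{qx})\le\pitwo$ for \emph{all} directions $\xi$, and it is the tension between this bound and the reversed hinge inequality that drives the proof. Securing the strict inequality $\rho<\pitwo$, which both rules out the degenerate case $E=\partial X$ and is what makes the hinge estimate strict, is the other point that requires care.
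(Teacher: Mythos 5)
Your proof is correct and follows essentially the same route as the paper's: extremality of $\partial X$ makes the foot point $q$ critical for $\mathrm{dist}_x$, giving $\angle(\overrightarrow{qx},\overrightarrow{qe})\le\pitwo$, and the hinge comparison with $\abs{xe}\ge\pitwo$ and $\abs{xq}<\pitwo$ then forces $\abs{qe}\ge\pitwo$. You additionally supply two details the paper leaves implicit -- the strict inequality $\abs{xq}<\pitwo$ via $E\ne\partial X$, and the justification that $S\cap\partial X\subset\partial S$ -- which is a welcome, if slightly informal, completion rather than a different argument.
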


\begin{proof}
By the choice of $q$, $\angle\left(\overrightarrow{qx},\overrightarrow{qe}\right)\leq\frac{\pi}{2}$
for any $\overrightarrow{qe}$, $e\in E$. As $\abs{xe}=\frac{\pi}{2}$
and $\abs{xq}<\frac{\pi}{2}$, this implies $\abs{eq}=\frac{\pi}{2}$.
Thus $q\in S$. In particular, the closest points in $\partial X$
to the soul $s$ lie in $S$ and hence in $\partial S$. 
\end{proof}
\begin{rem}
This shows that $S\cap\partial X$ is nonempty provided $E\neq\partial X$
but not that $\partial S\subset\partial X$. We will construct examples
(3.6, 3.9, 3.10) where the soul $s\in\partial S$. In particular,
$s$ need not be the soul of $S$. 
\end{rem}

In any case, the distance function to $\partial X$ restricted to
$S$ agrees with the distance function in $S$ to $S\cap\partial X$.
Thus
\begin{prop}
\label{prop:Sboundary}The distance function to $S\cap\partial X$
on $S$ is strictly concave and has its maximum at $s$. In particular,
$S$ is homeomorphic to the cone on $S\cap\partial X$. Moreover,
$s$ is the soul of $S$ if and only if $S\cap\partial X=\partial S$,
and if not $s\in\partial S$. 
\end{prop}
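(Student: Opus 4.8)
The plan is to reduce all four assertions to the behaviour of the ambient distance-to-boundary function $r(x)=\abs{x\partial X}$ on the $\pi$-convex set $S$, and then to feed this into the standard theory of strictly concave functions on Alexandrov spaces. First I would justify the identity $\mathrm{dist}_S(\cdot,S\cap\partial X)=r|_S$ asserted just before the statement. Given $x\in S$, choose $q\in\partial X$ closest to $x$; by Lemma~\ref{lem:footpoint} we have $q\in S$, and since $S$ is $\pi$-convex and $\abs{xq}=r(x)\le\pitwo<\pi$, the minimal geodesic from $x$ to $q$ stays in $S$. Hence $\mathrm{dist}_S(x,S\cap\partial X)\le\abs{xq}=r(x)$, while the reverse inequality is automatic from $\mathrm{dist}_S\ge\mathrm{dist}_X$ and $S\cap\partial X\subseteq\partial X$. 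Write $\rho:=r|_S=\mathrm{dist}_S(\cdot,S\cap\partial X)$.

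For the strict concavity I would invoke the comparison for the distance to the boundary in spaces with $\curv\ge1$: along any geodesic $\gamma$ of $X$ one has $(\sin(r\circ\gamma))''\le-\sin(r\circ\gamma)$ in the barrier sense, the model being the hemisphere, where $\sin r$ is the height over the equator and satisfies $(\sin r)''=-\sin r$. Since $S$ is $\pi$-convex, every geodesic of $S$ of length $<\pi$ is a geodesic of $X$, so this inequality holds along geodesics of $S$; and because a point of $S$ with $r=\pitwo$ would realize the maximal distance to $\partial X$ and hence equal the soul, we have $0<r<\pitwo$ on $S\setminus(\{s\}\cup(S\cap\partial X))$, where $(\sin\rho)''\le-\sin\rho<0$. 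Thus $\sin\rho$, and so $\rho$ in the comparison sense, is strictly concave. The maximum sits at $s$: indeed $s\in S$ because every point of $E$ lies at distance $\pitwo$ from $s$, so $\abs{sE}=\pitwo$; and $r$ attains its global maximum on $X$ at the soul. Strict concavity upgrades this to the \emph{unique} maximum, and the cone structure is then immediate from the Morse theory of concave functions already used for Theorem~1.1 (via \cite{K1}): a strictly concave function whose only critical point is its maximum $s$ and whose minimal level set is $\rho^{-1}(0)=S\cap\partial X$ presents $S$ as the cone over $S\cap\partial X$ with apex $s$, the homeomorphism being realised by the gradient flow of $\rho$ (the deformation already alluded to in Proposition~\ref{prop:dual}).

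Finally, the soul of $S$ is the unique maximum of $\sigma:=\mathrm{dist}_S(\cdot,\partial S)$. I would first check $S\cap\partial X\subseteq\partial S$ — a point of $\partial X$ lying in $S$ is a boundary point of $S$ — so that $\sigma\le\rho$. If $S\cap\partial X=\partial S$ then $\sigma=\rho$, and its unique maximum $s$ is the soul. The content of the remaining implications is that extra boundary $\partial S\setminus(S\cap\partial X)$ forces $s$ itself onto $\partial S$; equivalently, I must show $s\notin\partial S\Rightarrow\partial S=S\cap\partial X$, and then $\sigma=\rho$ makes $s$ the soul.

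Here I would work in the space of directions: since $s\in\mathrm{int}\,X$ we have $\partial S_sX=\emptyset$, so $s\in\partial S$ exactly when $S_sS\subsetneq S_sX$, and I would relate the convex subcone $S_sS$ — on which the footpoint directions $\overrightarrow{sq}$ (for $q\in S\cap\partial X$ closest to $s$, which by Lemma~\ref{lem:footpoint} are exactly the footpoints of $s$ on $\partial X$) form a $\pitwo$-net because $s$ is critical for $\rho$ — to the location of $\partial S$. Establishing that the \emph{only} boundary of $S$ seen from an interior soul point is $S\cap\partial X$ is the main obstacle: the boundary $\partial S$ need not lie in $\partial X$ (as the examples announced in the Remark show), and the cone presentation of $S$ is only topological, so one cannot simply read off $\partial S$ from it.
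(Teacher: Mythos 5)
Your handling of the first two assertions is correct and fills in details the paper leaves implicit: the identification $\mathrm{dist}_{S}\left(\cdot,S\cap\partial X\right)=r|_{S}$ via Lemma \ref{lem:footpoint} and $\pi$-convexity, concavity from $\left(\sin\left(r\circ\gamma\right)\right)''\leq-\sin\left(r\circ\gamma\right)$, the maximum at $s$, and the cone structure from the gradient flow of a strictly concave function with unique critical point. (A small simplification: strictness only requires $\sin\rho>0$, which holds wherever $\rho>0$ since $\rho\leq\pitwo<\pi$; the discussion of whether $\rho$ can equal $\pitwo$ is unnecessary.)

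The genuine gap is the one you flag yourself: the final biconditional is not proved, and the reduction you set up for it is incorrect. You write that, since $s\in\mathrm{int}X$, ``$s\in\partial S$ exactly when $S_{s}S\subsetneq S_{s}X$''; but $s\in\partial S$ is equivalent to the space of directions of $S$ at $s$ having nonempty boundary, not to its being a proper subset of $S_{s}X$ --- in the join examples $X=\hat{S}*\hat{E}$ the spine's space of directions at $s$ is a proper convex subset of $S_{s}X$ while $s$ is an interior point of the spine. The argument that actually closes the claim is a gradient-flow one in the spirit of the sentence following Proposition \ref{prop:dual}: $S$ is itself an Alexandrov space with $\curv\geq1$, so $\partial S$ is an extremal subset of $S$ and is therefore invariant under the gradient flow of the semiconcave function $\sin\circ\rho$ on the region $\left\{ \rho>0\right\} $, where it is uniformly concave. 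Every flow line in $\left\{ \rho>0\right\} $ terminates at the unique critical point $s$, and $\partial S$ is closed; hence if $\partial S$ contains any point of $\mathrm{int}X$, then $s\in\partial S$ and $s$ cannot be the soul of $S$. For the converse direction one also needs the containment $S\cap\partial X\subseteq\partial S$, which you assert with the justification ``a point of $\partial X$ lying in $S$ is a boundary point of $S$''; that is false for a general convex subset meeting $\partial X$ (consider the straight edge of a half-disk), so this containment requires an argument of its own using that each such point is a minimum of $\rho$, which is strictly concave wherever it is positive. Granting both facts, $s\notin\partial S$ forces $\partial S=S\cap\partial X$, whence $\mathrm{dist}_{S}\left(\cdot,\partial S\right)=\rho$ and its unique maximum $s$ is the soul of $S$.
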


\begin{rem}
When $S\cap\partial X\neq\partial S$, then $\partial S=(S\cap\partial X)\cup(\partial S\cap\text{int}X)$.
In this case, $S\cap\partial X$ is a \emph{face} of the boundary
$\partial S$, and $s$ is the soul point of $S$ relative to this
face. 
\end{rem}

We have one more simple general observation about $S$.
\begin{prop}
\label{prop:dimS=00003D0}When $\dim S=0$, then $E=\partial X$ and
$X=C_{1}E=C_{1}\partial X$. 
\end{prop}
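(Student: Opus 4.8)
The plan is to first upgrade the set-theoretic part to $E=\partial X$, thereby reducing everything to the single statement $X=C_{1}E$, and then to recognize $X$ as one half of a spherical suspension obtained by doubling $X$ along its boundary.

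First I would dispose of the set-theoretic claim. Since $S$ is $0$-dimensional it has empty boundary, $\partial S=\emptyset$, so the contrapositive of Lemma \ref{lem:footpoint} (which asserts $\partial S\neq\emptyset$ whenever $E\neq\partial X$) forces $E=\partial X$; in particular $C_{1}E=C_{1}\partial X$, and it remains only to prove $X=C_{1}E$. (Incidentally $S=\{s\}$ here: by proposition \ref{prop:dual} the set $S$ is $\pi$-convex with $\rad S<\pitwo$, hence of diameter $<\pi$, and a $0$-dimensional set with this property cannot contain two distinct points, since a minimizing geodesic between them would lie in $S$ and be $1$-dimensional.) For the cone statement the only facts I actually need are $E=\partial X$ together with $\abs{s\partial X}=\abs{sE}=\pitwo$, the latter being immediate from the definition of $E$ and the fact that $E\neq\emptyset$ when $\rad X=\pitwo$.

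Next I would pass to the double $D(X)$ of $X$ along $\partial X=E$. By Perel'man's doubling theorem $\curv D(X)\geq1$, and $D(X)$ is compact. Writing $s_{+},s_{-}$ for the two copies of the soul (both interior points, as $\abs{s\partial X}=\pitwo>0$), I would show $\abs{s_{+}s_{-}}_{D(X)}=\pi$. Any path from $s_{+}$ to $s_{-}$ must cross the gluing locus $E$ at some point $e$, and since \emph{every} point of $E$ lies at distance exactly $\pitwo$ from $s$ in $X$, such a path has length at least $\abs{s_{+}e}+\abs{es_{-}}=\pitwo+\pitwo=\pi$; together with the diameter bound $\diam D(X)\leq\pi$ coming from $\curv\geq1$ this yields $\abs{s_{+}s_{-}}_{D(X)}=\pi$. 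Now I invoke the maximal diameter rigidity recalled in the introduction: an Alexandrov space with $\curv\geq1$ and two points at distance $\pi$ is a spherical suspension with those points as poles. Thus $D(X)=\Sigma_{1}W$ with poles $s_{\pm}$ and equator $W=\{x:\abs{xs_{+}}=\pitwo\}$. To identify $W$ with $E$, note that for $x$ in the $+$-copy one has $\abs{xs_{+}}=\abs{xs}_{X}$ and, by the same crossing argument (again using that all edge points are equidistant $\pitwo$ from $s$), $\abs{xs_{-}}=\abs{xE}+\pitwo$; the equatorial condition $\abs{xs_{+}}=\abs{xs_{-}}=\pitwo$ then forces $\abs{xs}=\pitwo$, i.e. $x\in E$. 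Hence $W=E=\partial X$, so $D(X)=\Sigma_{1}E$ is the double of $C_{1}E$, and $X$, being the closed half of $D(X)$ cut out by the fixed equator $E$ and containing $s_{+}$, is isometric to $C_{1}E=C_{1}\partial X$.

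The hard part will be the last identification, namely ensuring that the abstract suspension furnished by the diameter theorem is the concrete one compatible with the doubling, so that $X$ is genuinely the $s_{+}$-cone and not some other hemisphere. This is exactly what the distance computation $\abs{xs_{-}}=\abs{xE}+\pitwo$ is designed to pin down: it forces the suspension equator to coincide with the gluing locus $\partial X$ and, as a byproduct, recovers the complementary radial relation $\abs{xs}+\abs{xE}=\pitwo$ for every $x$, which is precisely the cone identity. The only other points requiring care are the isometric embedding of each copy into $D(X)$ (so that intrinsic distances to $E$ are not shortened through the other copy) and the verification that the reflection swapping the two copies is the suspension reflection fixing $E$ and exchanging the poles $s_{\pm}$.
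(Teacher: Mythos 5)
Your proof is correct, but it takes a genuinely different route from the paper's. For the set-theoretic claim the paper flows $\partial X$ along the gradient of the distance to $E$ (this flow preserves the extremal set $\partial X$ and must terminate in $S\cap\partial X$, which is impossible when $S=\left\{ s\right\} $), while you take the contrapositive of Lemma \ref{lem:footpoint}; these are the same observation in different packaging. For the rigidity the paper never leaves $X$: it introduces $f_{\partial}=\sin\abs{x\partial X}$ and $f_{1}=1-\cos\abs{xs}$, checks $\ddot{f}_{\partial}+\ddot{f}_{1}\leq0$ against $f_{\partial}+f_{1}\geq1$ (with equality along radial geodesics), and concludes from the minimum principle that $f_{\partial}+f_{1}\equiv1$, which forces $\gexp_{s}\left(1;\cdot\right):C_{1}\left(S_{s}X\right)\rightarrow X$ to be an isometry. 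You instead double along the boundary, force $\abs{s_{+}s_{-}}=\pi$ by the crossing estimate, and invoke the maximal diameter rigidity to realize $D\left(X\right)=\Sigma_{1}E$ with the gluing locus as equator; the two points you flag at the end are genuine but routine (the isometric embedding of each copy follows from the $1$-Lipschitz folding map $D\left(X\right)\rightarrow X$, and the half $\left\{ \abs{xs_{+}}\leq\pitwo\right\} $ is $\pi$-convex as the complement of $B\left(s_{-},\pitwo\right)$, so it carries the cone metric $C_{1}E$). Your version buys a more conceptual proof from two quotable theorems (Perel'man's doubling theorem and suspension rigidity), at the cost of leaving $X$; the paper's computation has the advantage that it is exactly the template reused for the curvature-$k$ cone rigidity in Theorem \ref{thm:rad-rigid-generic}, where no maximal diameter theorem is available.
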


\begin{proof}
Proposition \ref{prop:dual} shows that any gradient curve for $E$
ends in $S$. If we start in a direction of $\partial X$, then the
gradient curve will stay in the extremal set $\partial X$. Therefore,
it ends in a point $\partial X\cap S$. This is clearly not possible
when $S=\left\{ s\right\} $, so it follows that $E=\partial X$. 

Next we give the details of the rigidity statement along the same
lines as in the proof of theorem \ref{thm:rad-rigid-generic}. Consider
$f_{\partial}\left(x\right)=\sin\abs{x\partial X}$ and $f_{1}\left(x\right)=1-\cos\abs{xs}$.
These functions satisfy $\ddot{f}_{\partial}\left(x\right)\leq-\sin\abs{x\partial X}$
(see \cite[theorem 3.3.1]{Pe}) and $\ddot{f}_{1}\left(x\right)\leq\cos\abs{xs}$.
Since every point on the boundary is at distance $\pitwo$ from $s$,
it follows that $\abs{x\partial X}+\abs{xs}\geq\pitwo$. Hence, 
\begin{eqnarray*}
\ddot{f}_{\partial}\left(x\right)+\ddot{f}_{1}\left(x\right) & \leq & -\sin\abs{x\partial X}+\cos\abs{xs}\\
 & \leq & -\sin\abs{x\partial X}+\cos\left(\pitwo-\abs{x\partial X}\right)\\
 & = & 0.
\end{eqnarray*}
On the other hand 
\begin{eqnarray*}
f_{\partial}\left(x\right)+f_{1}\left(x\right) & = & \sin\abs{x\partial X}+1-\cos\abs{xs}\\
 & \geq & \sin\abs{x\partial X}+1-\cos\left(\pitwo-\abs{x\partial X}\right)\\
 & = & 1
\end{eqnarray*}
with equality holding for any $x$ that lies on a geodesic from $s$
to $\partial X$. The minimum principle then shows that $f_{\partial}+f_{1}=1$
on all of $X$. This shows that the gradient exponential map $\mathrm{gexp}_{s}\left(1;\cdot\right):C_{1}\left(S_{s}X\right)\rightarrow X$
is an isometry.
\end{proof}
\medskip{}

\begin{prop}
\label{prop:E}Let $X$ be an Alexandrov space with $\curv\ge1$,
nonempty boundary, and $\rad=\pi/2$. It follows that 
\begin{enumerate}
\item the gradient curves for $r\left(x\right)=\abs{x\partial X}$ that
start in $E$ are minimal geodesics from $E$ to $s$, 
\item $\rad E\geq\frac{\pi}{2}$. 
\end{enumerate}
\end{prop}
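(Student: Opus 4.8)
The plan is to extract everything from the strong concavity of the boundary distance together with the maximality $\abs{es}=\pitwo$ for $e\in E$, and to organize both statements around minimal geodesics to the soul.

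For (1) I would first record that $h=\sin r$ satisfies $\ddot h\le-h$ along every geodesic (this is the estimate $\ddot f_\partial\le-\sin\abs{x\partial X}$ already used for Proposition \ref{prop:dimS=00003D0}), so $h$ is strictly concave on $\mathrm{int}X$ and $r$ has a unique critical point, the maximum $s$; in particular every gradient curve of $r$ converges to $s$. Next, fix $e\in E$ and a unit-speed minimal geodesic $\sigma\colon[0,\pitwo]\to X$ from $e$ to $s$, so that $\abs{\sigma(t)s}=\pitwo-t$. I would run a Sturm/Wronskian comparison between $u(t)=\sin r(\sigma(t))$, which satisfies $u''+u\le0$, and the exact solution $v(t)=\cos\abs{\sigma(t)s}=\sin t$. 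The Wronskian $W=u'v-uv'$ has $W'=(u''+u)\sin t\le0$, while $W(0)=-u(0)=-\sin r(e)\le0$ and $W(\pitwo)=u'(\pitwo^{-})\ge0$ because $r\circ\sigma$ is concave with maximum at the right endpoint $s$. Monotonicity forces $0\le W(\pitwo)\le W(0)\le0$, hence $r(e)=0$ and $W\equiv0$. The first conclusion is that $E\subset\partial X$, so the curves in question really do issue from the boundary; the second gives the rigid profile $\sin r(\sigma(t))=\sin r(s)\,\sin t$ along $\sigma$.

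It remains, for (1), to identify $\sigma$ with the gradient curve of $r$ from $e$. Here $W\equiv0$ means equality holds in $\ddot{(\sin r)}\le-\sin r$ all along $\sigma$, and the rigidity in this second-variation estimate should force $\sigma$ to cross the level sets of $r$ orthogonally and in the direction of steepest ascent, i.e.\ $\dot\sigma(t)$ is the normalized gradient $\nabla r$. Since $r\circ\sigma$ is then strictly increasing (as $r(s)>0$ for an interior soul) and forward gradient curves of the semiconcave $r$ are unique, $\sigma$ is exactly the gradient curve from $e$; in passing this also shows the minimal geodesic from $e$ to $s$ is unique. I expect this equality-rigidity step to be the main obstacle in (1): turning the scalar identity $W\equiv0$ into the statement that $\sigma$ is an integral curve of $\nabla r$ requires the full strength of the rigidity case of the boundary-distance Hessian comparison, not merely the differential inequality.

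For (2) I would argue by contradiction, using (1) to reduce to the space of directions at $s$. Recall that $X\subset\bar B(s,\pitwo)$, that $\rad S_sX\ge\pitwo$, and that by the proposition dual to $s$ the directions $\{\overrightarrow{se}:e\in E\}$ form a $\pitwo$-net in $S_sX$ (this is the criticality of $s$ for the distance to $E$). Suppose $\rad E<\pitwo$, so some $e_1\in E$ has $\abs{e_1 e}<\pitwo$ for all $e\in E$; since $E$ is $\pi$-convex (Proposition \ref{prop:dual}), intrinsic and extrinsic distances on $E$ agree below $\pi$, so $E$ is genuinely bunched in an open $\pitwo$-ball about $e_1$. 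The plan is to move the center a controlled amount from $s$ along the geodesic $\overrightarrow{se_1}$ and to show, via the Toponogov hinge inequality at $s$, that the resulting point covers $X$ within radius $<\pitwo$, contradicting $\rad X=\pitwo$. The delicate points are the $x\in X$ with $\abs{sx}$ near $\pitwo$ and $\angle(\overrightarrow{sx},\overrightarrow{se_1})$ near $\pi$: for these, perturbing the center toward $e_1$ increases the distance. I would control them by compactness, letting such $x$ accumulate on a point $e_\infty\in E$ whose direction is nearly antipodal to $\overrightarrow{se_1}$, and combining with (1) to produce a point of $E$ far from $e_1$, contradicting the bunching. Making this dichotomy uniform—and in particular producing a genuine lower bound on $\abs{e_1 e_\infty}$, which the lower curvature bound alone does not supply—is the crux of (2), and is where I expect to lean hardest on the rigid radial structure from (1) and on $\rad S_sX\ge\pitwo$.
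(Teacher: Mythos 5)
Your Wronskian computation along a minimal geodesic $\sigma$ from $e\in E$ to $s$ is correct as far as it goes --- it is a nice way to recover $E\subset\partial X$ and the profile $\sin r(\sigma(t))=\sin r(s)\sin t$ --- but the step you yourself flag, converting the scalar identity $W\equiv0$ into ``$\sigma$ is an integral curve of $\nabla r$,'' is a genuine gap, not a routine citation. There is no off-the-shelf rigidity statement for the inequality $\ddot{(\sin r)}\le-\sin r$ in Alexandrov spaces, and the identity only pins down $(r\circ\sigma)'$; it gives no upper bound on $\abs{\nabla r}$ along $\sigma$, which is what you would need to conclude that $\sigma$ ascends at the maximal rate and hence coincides with the (forward-unique) gradient curve. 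The intended argument reverses the logic and avoids rigidity entirely: take the gradient curve $c:[0,L]\to X$ of $r$ starting at $e\in E\subset\partial X$ (that $E\subset\partial X$ is already part of proposition \ref{prop:dual}) and ending at $s$, and run your two endpoint comparisons along $c$ itself, using that $\ddot f+f\le0$, $f=\sin(r\circ c)$, holds along gradient curves of $r$ (cf.\ \cite[lemma 2.1.3]{Pe}). From $f(0)=0$ and $f\ge0$ one gets $L\le\pi$; reversing time and using $f(L)=\sin r(s)>0$ together with $\dot f(L^{-})\ge0$ forces $L\le\pitwo$. Since $L\ge\abs{es}=\pitwo$, the curve realizes the distance between its endpoints and is a minimal geodesic --- no identification of a previously chosen geodesic with the gradient curve is ever required.

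For (2) the obstruction you identify is in fact fatal for your choice of perturbation direction: a lower curvature bound yields \emph{upper} bounds on hinge distances and \emph{lower} bounds on angles, so from $\angle(\overrightarrow{se_1},\overrightarrow{se_\infty})\ge\pitwo$ one only gets $\abs{e_1e_\infty}\le\angle(\overrightarrow{se_1},\overrightarrow{se_\infty})$, never a lower bound; two points of $E$ can subtend a large angle at $s$ while being arbitrarily close, and neither (1) nor $\rad S_sX\ge\pitwo$ repairs this. The paper perturbs in the opposite direction, from the witnessing point $e$ (with $E\subset B(e,\pitwo)$) toward $s$ along the minimal geodesic furnished by (1). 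Because that geodesic is a gradient curve of $r$ and $e\in\partial X$, its direction $\overrightarrow{es}$ is the soul of $S_eX$, so \emph{every} direction at $e$ makes angle $\le\pitwo$ with it; the hinge comparison at $e$ then gives $\cos\abs{c(t)y}\ge\sin t\cos\abs{ey}>0$ for all $y\in B(E,\epsilon)$, i.e.\ the upper bound you need holds for all targets at once, while $X-B(E,\epsilon)\subset\bar B(s,\pitwo-\delta)$ disposes of the remaining points for $t<\delta$. The moral is that the base point of the perturbation must be chosen where all angles are controlled, and that is the boundary point $e$, not the soul $s$.
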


\begin{proof}
Let $c:\left[0,L\right]\rightarrow X$ be a gradient curve for $r$
reparametrized by arclength with $c\left(0\right)\in\partial X$ and
$c\left(L\right)=s$. We will use comparison along this gradient curve
as in \cite[lemma 2.1.3]{Pe}. To that end consider $f\left(t\right)=\sin\left(r\circ c\right)$
and note that $c$ is clearly also a gradient curve for $f$. This
implies $\ddot{f}+f\leq0$ in the support sense. Since $f\left(0\right)=0$
this shows that $f\left(t\right)\leq\dot{f}\left(0\right)\sin t$.
As $f\geq0$, this implies that $L\leq\pi$. Define $g\left(\tau\right)=f\left(L-\tau\right)$
and note that also $\ddot{g}+g\leq0$ in the support sense. This time
\[
g\left(\tau\right)\leq g\left(0\right)\cos\tau+\dot{g}\left(0\right)\sin\tau.
\]
At $\tau=L$ this becomes 
\[
0\leq g\left(0\right)\cos\left(L\right)+\dot{g}\left(0\right)\sin\left(L\right),
\]
where $g\left(0\right)>0$ and $\dot{g}\left(0\right)\leq0$ as $r$,
$f$, and hence $g$ are maximal at $s$. Since $L\leq\pi$, this
forces $L\leq\frac{\pi}{2}$. On the other hand we always have $L\geq\abs{s\,c\left(0\right)}$
so when $c\left(0\right)=e\in E$ this shows that $L=\frac{\pi}{2}$
and that the gradient curve must be a minimal geodesic from $e$ to
$s$. This proves (1).

For (2) we use that $E$ consists of all points at distance $\frac{\pi}{2}$
from $s$. Moreover, by (1) of Proposition \ref{prop:dual} intrinsic
distances in $E$ are the same as the extrinsic distances in $X$.
Assume that $B\left(E,\epsilon\right)\subset B\left(e,\frac{\pi}{2}\right)$.
In (1) we saw that there is a minimal geodesic $c:\left[0,\frac{\pi}{2}\right]\rightarrow X$
from $s$ to $e$ which is a reparametrized gradient curve for $r$.
We first claim that $\overrightarrow{es}=\dot{c}^{-}\left(\frac{\pi}{2}\right)\in S_{e}X$
is the soul. In fact, by first variation, $d_{e}r\left(\xi\right)$,
$\xi\in S_{e}X$ is maximal when $\xi$ is the soul of $S_{e}X$ (see
\cite[definition 1.3.2 ]{Pe}). Thus, $\angle\left(w,\overrightarrow{es}\right)\leq\frac{\pi}{2}$
for all $w\in S_{e}X$ and by Toponogov comparison $B\left(E,\epsilon\right)\subset B\left(c\left(t\right),\frac{\pi}{2}\right)$
for all $t>0$. However, also $X-B\left(E,\epsilon\right)\subset B\left(c\left(t\right),\frac{\pi}{2}\right)$
for sufficiently small $t$. This shows that $\rad X<\frac{\pi}{2}$. 
\end{proof}
As an immediate consequence we have
\begin{cor}
\label{prop:dimE=00003D0}When $\dim E=0$, then $X=\Sigma_{1}S$. 
\end{cor}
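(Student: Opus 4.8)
The plan is to show that the hypotheses force the edge $E$ to consist of exactly two points at distance $\pi$, then to invoke the diameter rigidity for spaces with $\curv\geq1$ recalled in the introduction, and finally to identify the resulting equator with the spine $S$. First I would record that $E$ is finite: it is closed in the compact space $X$ and has $\dim E=0$, hence is a finite point set. Proposition \ref{prop:E}(2) gives $\rad E\geq\pitwo>0$, so $E$ is not a single point and therefore $\abs{E}\geq2$.

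The crucial step is that any two distinct points of $E$ lie at distance exactly $\pi$. This combines $\dim E=0$ with the $\pi$-convexity of $E$ in $X$ from Proposition \ref{prop:dual}(1): if distinct $e_{1},e_{2}\in E$ satisfied $\abs{e_{1}e_{2}}<\pi$, then a minimal geodesic joining them would have length $<\pi$ and, by $\pi$-convexity, would lie in $E$; but a nondegenerate geodesic segment is one-dimensional, contradicting $\dim E=0$. Since $\diam X\leq\pi$, every pair of distinct points of $E$ is thus at distance exactly $\pi$. Picking $e_{+},e_{-}\in E$ with $\abs{e_{+}e_{-}}=\pi$, the diameter rigidity shows that $X=\Sigma_{1}Y$ is the spherical suspension over the equator $Y=\{x\in X:\abs{xe_{+}}=\abs{xe_{-}}=\pitwo\}$ with $e_{\pm}$ as poles. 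In any such suspension $\abs{xe_{+}}+\abs{xe_{-}}=\pi$ for all $x$, so $e_{+}$ has a unique antipode; hence no third point of $E$ can be at distance $\pi$ from $e_{+}$, and $E=\{e_{+},e_{-}\}$ exactly.

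It then remains to identify $Y$ with the spine $S$. By definition $S=\{x:\abs{xE}=\pitwo\}$, and $\abs{xE}=\min\{\abs{xe_{+}},\abs{xe_{-}}\}$. Using the suspension identity $\abs{xe_{+}}+\abs{xe_{-}}=\pi$, this minimum equals $\pitwo$ precisely when both distances equal $\pitwo$, i.e. exactly on the equator $Y$. Therefore $S=Y$ and $X=\Sigma_{1}S$, as claimed. The step demanding the most care is the second paragraph: one must be sure that the zero-dimensionality genuinely rules out a short geodesic inside $E$ (so that all distinct pairs are forced to distance $\pi$) and that the diameter rigidity is applied to the correct pair, since these two facts together are what collapse $E$ to a genuine $\mathbb{S}^{0}$ and supply the suspension poles; the identification $Y=S$ afterwards is then a routine consequence of the suspension distance relation.
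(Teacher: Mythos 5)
Your proof is correct and follows essentially the same route as the paper's: $\pi$-convexity of $E$ together with $\dim E=0$ and $\rad E\geq\pitwo$ forces $E$ to be two points at distance $\pi$, after which the maximal diameter rigidity gives the suspension; you additionally spell out the identification of the equator with $S$, which the paper leaves implicit. (The opening assertion that a closed $0$-dimensional set is automatically finite is not justified as stated, but it is harmless since your later argument independently pins down $\abs{E}=2$.)
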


\begin{proof}
Since $E$ is $\pi$-convex and has $\rad\geq\frac{\pi}{2}$ it follows
that $E=\left\{ 0,\pi\right\} $. Thus $\diam X=\diam E=\pi$ and
the result follows. 
\end{proof}
Define $E^{\prime}\subset S_{s}X$ as the directions $\overrightarrow{se}$,
$e\in E$, that correspond to the gradient curves for $r$ as in part
(1) of proposition \ref{prop:E}. Note that we have not excluded the
possibility that there might be other minimal geodesics from $s$
to points in $E$ that do not correspond to such gradient curves.
\begin{lem}
\label{lem:gradexp} $E^{\prime}$ and $E$ are isometric via the
spherical gradient exponential map. Consequently, $E^{\prime}\subset S_{s}X$
is closed, convex, and $\rad E^{\prime}\geq\frac{\pi}{2}$. 
\end{lem}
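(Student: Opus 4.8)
The plan is to exhibit a surjective isometry $\Phi\colon E'\to E$ sending $v=\overrightarrow{se}\in E'$ to the endpoint $e$ of the corresponding gradient-curve geodesic; the three stated consequences then follow by transport of structure. Identifying the domain of $\gexp_s\left(1;\cdot\right)$ with $C_1\left(S_sX\right)$, the radial segment in a direction $v\in E'$ is mapped \emph{isometrically} onto the minimal geodesic $\left[se\right]$ of length $\pitwo$ by part (1) of Proposition \ref{prop:E}. Since $\gexp_s\left(1;\cdot\right)$ is distance nonincreasing, and the distance in $C_1\left(S_sX\right)$ between $\left(v_1,\pitwo\right)$ and $\left(v_2,\pitwo\right)$ equals $\angle\left(v_1,v_2\right)$ directly from the cone law of cosines ($\cos d=\cos\pitwo\cos\pitwo+\sin\pitwo\sin\pitwo\cos\angle=\cos\angle$), I immediately obtain $\abs{e_1e_2}\le\angle\left(v_1,v_2\right)$. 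Surjectivity is free: by Proposition \ref{prop:dual}(1) the edge satisfies $E\subset\partial X$, so every $e\in E$ carries a gradient curve for $r$, which by Proposition \ref{prop:E}(1) is a minimal geodesic $\left[se\right]$, giving $\overrightarrow{se}\in E'$ with $\Phi\left(\overrightarrow{se}\right)=e$; injectivity is then automatic from the distance identity below.

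The crux is the reverse inequality $\angle\left(v_1,v_2\right)\le\abs{e_1e_2}$. Fix $v_1,v_2\in E'$ with images $e_1,e_2$ and set $\ell=\abs{e_1e_2}$; the cases $\ell=0$ and $\ell=\pi$ are handled by the squeeze $\ell\le\angle\left(v_1,v_2\right)\le\pi$, so assume $0<\ell<\pi$. Let $\gamma$ be a minimal geodesic from $e_1$ to $e_2$. By the $\pi$-convexity of $E$ in $X$ (Proposition \ref{prop:dual}(1)) the curve $\gamma$ lies in $E$, so $\abs{\gamma(t)s}\equiv\pitwo$. Consider the triangle with vertices $s,e_1,e_2$ whose sides are the gradient-curve geodesics $\left[se_1\right],\left[se_2\right]$ together with $\gamma$. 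Its comparison angle at $e_1$, computed from $\abs{e_1s}=\abs{e_2s}=\pitwo$ and $\abs{e_1e_2}=\ell$ via the spherical law of cosines, equals $\pitwo$ (since $\sin\ell\neq0$); hence the true angle at $e_1$ is $\ge\pitwo$. On the other hand, $e_1\in\partial X$, and as established in the proof of Proposition \ref{prop:E} the direction $\overrightarrow{e_1s}$ is the soul of $S_{e_1}X$ and satisfies $\angle\left(w,\overrightarrow{e_1s}\right)\le\pitwo$ for every $w\in S_{e_1}X$; applying this to $w=\dot\gamma^+\left(0\right)$ shows the angle at $e_1$ is $\le\pitwo$. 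Thus the true angle at $e_1$ equals its comparison angle, and the rigidity case of Toponogov's theorem forces the triangle to bound a convex region isometric to the corresponding triangle in $\bbS^2\left(1\right)$. In that model triangle the apex angle equals the opposite side, so $\angle\left(v_1,v_2\right)=\ell=\abs{e_1e_2}$, and $\Phi$ is an isometry.

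Finally I read off the consequences. Since $E$ is compact, the bijective isometry $\Phi$ makes $E'$ compact, hence closed in $S_sX$. Radius is an isometry invariant, so $\rad E'=\rad E\ge\pitwo$ by Proposition \ref{prop:E}(2). For convexity, recall that $E$ is intrinsically geodesic with induced metric equal to the ambient one (Proposition \ref{prop:dual}(1)); transporting a minimal geodesic of $E$ through $\Phi^{-1}$ yields a curve in $E'$ of length $\abs{e_1e_2}=\angle\left(v_1,v_2\right)$, i.e. a minimal geodesic of $S_sX$ lying in $E'$. Hence any geodesic of length $<\pi$ with endpoints in $E'$ is contained in $E'$, which is the asserted convexity.

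The main obstacle is precisely the reverse distance inequality: every elementary comparison available (the distance nonincrease of $\gexp_s$, and the Alexandrov bound that the true angle dominates the comparison angle) pushes in the single direction $\abs{e_1e_2}\le\angle\left(v_1,v_2\right)$, so a genuine rigidity input is unavoidable. The delicate step is to certify equality in the angle comparison at $e_1$, and this is exactly where the $\pi$-convexity of $E$ (to keep $\gamma$ at distance $\pitwo$ from $s$) and the soul characterization of $\overrightarrow{e_1s}$ in $S_{e_1}X$ (to cap the angle from above) must be combined; together they pin the angle to $\pitwo$ and unlock Toponogov rigidity.
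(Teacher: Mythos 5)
Your overall strategy is the same as the paper's: the inequality $\abs{e_1e_2}\le\angle\left(v_1,v_2\right)$ comes for free from the $1$-Lipschitz gradient exponential map, and the reverse inequality must come from the rigidity of triangles with apex $s$ over a minimal geodesic $\gamma\subset E$ at constant distance $\pitwo$ from $s$. The gap is in how you invoke that rigidity. Equality of the angle at $e_1$ with its comparison angle (equivalently, equality in the distance comparison $\abs{s\,\gamma\left(t\right)}=\pitwo$) produces a rigid spherical triangle spanned by minimal geodesics from $s$ to the points of $\gamma$; but the side of that triangle running from $e_2$ to $s$ is \emph{some} minimal geodesic supplied by the rigidity theorem, not one you get to prescribe. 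You declare at the outset that the triangle has sides ``the gradient-curve geodesics $\left[se_1\right],\left[se_2\right]$ together with $\gamma$'' and then read off $\angle\left(v_1,v_2\right)=\ell$ from the model, but the equality you verified at $e_1$ involves only the hinge formed by $\left[e_1s\right]$ and $\gamma$ together with the \emph{length} of the third side; it places no constraint on which element of $\overset{\Longrightarrow}{se_2}$ closes up the rigid triangle. What you actually obtain is $\angle\left(v_1,w_2\right)=\ell$ for some $w_2\in\overset{\Longrightarrow}{se_2}$, and since the paper explicitly does not exclude minimal geodesics from $s$ to points of $E$ that fail to be gradient curves (while $E^{\prime}$ is defined using only the gradient-curve directions), this does not give $\angle\left(v_1,v_2\right)\le\ell$ for the direction $v_2\in E^{\prime}$ you need. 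Nor can you repair this by arguing that every minimal geodesic from $e_2$ to $s$ is automatically a gradient curve for $r$: along such a geodesic $r$ increases only from $0$ to the inradius, which is strictly less than $\pitwo$ already for the lens $L_{\alpha}^{n}$ with $\alpha<\pi$.

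This identification of the rulings with gradient curves is exactly what the paper's proof is devoted to: it takes the rigid triangle over $\gamma$ granted by \cite{GM}, observes that the ruling direction field along $\gamma$ is intrinsically parallel inside the rigid triangle, and then uses Petrunin's parallel translation \cite{PePT} --- an isometry of spaces of directions, hence carrying the soul of $S_{\gamma\left(\tau_0\right)}X$ to the soul of $S_{\gamma\left(\tau\right)}X$ --- to conclude that every ruling starts at the soul direction and is therefore the gradient curve for $r$. Some argument of this kind is needed to close your proof. The remaining parts (the Lipschitz bound via $\gexp_s$, surjectivity, and the deduction of closedness, convexity, and $\rad E^{\prime}\ge\pitwo$ from the isometry) are fine and agree with the paper.
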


\begin{proof}
The goal is to show that the reparametrized gradient curves for $r$
that start in $E$ and end in $s$ form rigid constant curvature 1
triangles along minimal geodesics in $E$. As these curves are minimal
geodesics it follows that they are also gradient curves for the distance
to $s$. This will show that the gradient exponential map yields an
isometry from $E^{\prime}$ to $E$.

The proof uses the parallel translation construction from \cite{PePT}.
Consider a minimal geodesic $c\left(\tau\right):\left[0,b\right]\rightarrow E$
and fix a gradient curve reparametrized by arclength from $c\left(\tau_{0}\right)$
to $s$ for some $\tau_{0}\in\left(0,b\right)$. As $c$ is at constant
distance $\frac{\pi}{2}$ from $s$, there must be a rigid geodesic
triangle $c\left(\tau,t\right):\left[0,b\right]\times\left[0,\frac{\pi}{2}\right]\rightarrow X$,
where $t\mapsto c\left(\tau,t\right)$ is a unit speed geodesic from
$c\left(\tau\right)$ to $s$ and $t\mapsto c\left(\tau_{0},t\right)$
is the given reparametrized gradient curve for $r$ (see \cite{GM}).
There are choices involved in the construction of parallel translation,
however, inside the rigid triangle the field $\frac{dc}{dt}\left(\tau,0\right)$
is intrinsically parallel. Therefore, regardless of other choices,
we can always start by declaring that the constructions in \cite{PePT}
map $\frac{dc}{dt}\left(\tau_{1},0\right)$ to $\frac{dc}{dt}\left(\tau_{2},0\right)$
for $\tau_{1,2}\in\left(0,b\right)$. Thus we can assume that parallel
translations $P_{\tau}:S_{c\left(\tau_{0}\right)}X\rightarrow S_{c\left(\tau\right)}X$
preserve the rigid triangle for all $\tau\in\left(0,b\right)$. As
$P_{\tau}$ is also an isometry it follows that it must map the soul
$\frac{dc}{dt}\left(\tau_{0},0\right)\in S_{c\left(\tau_{0}\right)}X$
to the soul of $S_{c\left(\tau\right)}X$. This shows that the direction
$\frac{dc}{dt}\left(\tau,0\right)\in S_{c\left(\tau\right)}X$ is
the soul for any $\tau\in\left(0,b\right)$ and that $t\mapsto c\left(\tau,t\right)$
is in fact the gradient curve from $c\left(\tau\right)$ to $s$.
By continuity of gradient curves this will also be the case for $\tau=0,b$.
This proves the claim. 
\end{proof}
This lemma immediately tells us that 
\[
\diam E^{\prime}=\diam E\leq\diam X.
\]

This will be crucial for the proof of the Interior Regularity and
Maximal Volume Theorems.

Before turning to the proofs of the theorems in the introduction we
present examples that illustrate various phenomena discussed up till
now.

\section{Examples}

The first result gives an easy way of checking that that the radius
of the examples below indeed are $\pitwo$. 
\begin{prop}
\label{prop:rad=00003Dpi/2}Assume that $A,B\subset X$ are convex
subsets such that 
\[
\abs{ab}=\frac{\pi}{2}\textrm{ and }\abs{Ax}+\abs{xB}=\frac{\pi}{2}
\]
for all $a\in A,\,b\in B$, and $x\in X$. If $\rad B\leq\frac{\pi}{2}$
and $\rad A\geq\frac{\pi}{2}$, then $\rad X=\frac{\pi}{2}$. 
\end{prop}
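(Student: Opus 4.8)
The plan is to establish $\rad X\ge\pitwo$ and $\rad X\le\pitwo$ separately, drawing the first inequality from $\rad A\ge\pitwo$ and the second from $\rad B\le\pitwo$. The observation that drives both directions is that every $x\in X$ lies on a length-$\pitwo$ segment joining $A$ to $B$. Indeed, choosing feet $a^{\ast}\in A$ and $b^{\ast}\in B$ with $\abs{xa^{\ast}}=\abs{Ax}=:\alpha$ and $\abs{xb^{\ast}}=\abs{xB}=\pitwo-\alpha$, the hypotheses give $\abs{a^{\ast}b^{\ast}}=\pitwo=\abs{xa^{\ast}}+\abs{xb^{\ast}}$, so the concatenation of minimal geodesics $[a^{\ast}x]$ and $[xb^{\ast}]$ is itself minimal; thus there is a minimal geodesic $\sigma\colon[0,\pitwo]\to X$ with $\sigma(0)=a^{\ast}$, $\sigma(\pitwo)=b^{\ast}$, $\sigma(\alpha)=x$, along which $\abs{Ax}$ is just the arclength to the $A$-endpoint.

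For the lower bound, fix $p\in X$ with associated data $a^{\ast},b^{\ast},\sigma,\alpha$. Since $A$ is convex, its intrinsic metric coincides with the one induced from $X$, so $\rad A\ge\pitwo$ yields $a'\in A$ with $\abs{a^{\ast}a'}\ge\pitwo$. I would then apply the distance-to-a-point comparison for $\curv\ge1$ (the inequality $\ddot{(1-\cos\abs{\cdot\,a'})}\le\cos\abs{\cdot\,a'}$ in the support sense, exactly as used in the proof of Proposition \ref{prop:dimS=00003D0}) to $g(t)=\cos\abs{\sigma(t)\,a'}$, giving $\ddot g+g\ge0$ on the interval $[0,\pitwo]$ of length $<\pi$. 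The endpoint values are $g(0)=\cos\abs{a^{\ast}a'}\le0$ and $g(\pitwo)=\cos\abs{b^{\ast}a'}=0$, the latter because $a'\in A$ and $b^{\ast}\in B$. The maximum principle on $[0,\pitwo]$ then forces $g(t)\le g(0)\cos t+g(\pitwo)\sin t=g(0)\cos t\le0$, so $\cos\abs{pa'}=g(\alpha)\le0$ and hence $\abs{pa'}\ge\pitwo$. As $p$ was arbitrary, $\rad X\ge\pitwo$.

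For the upper bound I would use $\rad B\le\pitwo$ to pick $b_{0}\in B$ with $B\subseteq\bar B(b_{0},\pitwo)$ and try to prove $X=\bar B(b_{0},\pitwo)$. Here the exact identity $\abs{Ax}=\pitwo-\abs{xB}$ makes the function $\abs{\cdot\,A}$ attain its maximal value $\pitwo$ precisely on $B$, so $b_{0}$ should play the role of a soul for $\abs{\cdot\,A}$; the aim is to show that the distance-nonincreasing map $\gexp_{b_{0}}(1;\cdot)\colon C_{1}(S_{b_{0}}X)\to X$, whose image is $\bar B(b_{0},\pitwo)$, is onto, equivalently that there is no $x_{0}$ with $\abs{x_{0}b_{0}}>\pitwo$. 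The natural mechanism is to show that along a minimal geodesic $\eta$ from $b_{0}$ to a farthest point $x_{0}$ the function $\abs{\cdot\,A}$ decreases at unit speed from $\pitwo$ to $\alpha=\abs{Ax_{0}}$, which would give $\abs{x_{0}b_{0}}=\pitwo-\alpha\le\pitwo$.

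I expect this upper bound to be the genuine obstacle. For $\curv\ge1$, Toponogov and Hessian comparison bound distances only from below (they bound cosines of distances from above), and along the segment $\sigma$ the subsolution behaviour of $\cos\abs{\cdot\,b_{0}}$ in fact permits an interior value larger than at both endpoints; so $\abs{x_{0}b_{0}}\le\pitwo$ cannot be extracted from comparison inequalities alone. The decisive input must be the rigidity carried by the \emph{exact} relation $\abs{Ax}+\abs{xB}=\pitwo$ together with the convexity of $A$ and $B$, forcing $\abs{\cdot\,A}$ to be affine of unit slope along $\eta$ (equivalently forcing $\eta$ to be a gradient segment of $\abs{\cdot\,A}$ reaching $B$ at $b_{0}$). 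Converting the pointwise identity into this unit-speed statement is the crux, and it is the part of the argument I would expect to require the most care.
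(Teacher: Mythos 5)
Your first half is correct and complete. The observation that every $p\in X$ sits at parameter $\alpha$ on a minimal geodesic $\sigma$ of length $\pitwo$ from $A$ to $B$ (because the concatenation of the two foot-point segments realizes $\abs{a^{*}b^{*}}=\pitwo$), followed by the comparison $\ddot g+g\geq0$ for $g=\cos\abs{\sigma(\cdot)\,a'}$ and the two-point maximum principle on an interval of length $<\pi$ with $g(0)\leq0$, $g(\pitwo)=0$, does yield $\abs{pa'}\geq\pitwo$ and hence $\rad X\geq\pitwo$. This is a genuinely different route from the paper's: there, the segment $[a^{*}a']\subset A$ is observed to lie at constant distance $\pitwo$ from $b^{*}=\sigma(\pitwo)$, which is the equality case of comparison and, by the rigidity results of \cite{GM}, spans a totally geodesic spherical triangle containing $\sigma$, inside which $\abs{xa'}=\pitwo$ is read off exactly. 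Your ODE argument is more elementary and avoids the rigid-triangle machinery, at the cost of producing only the inequality $\geq\pitwo$ -- which is all that is needed for this direction.

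The genuine gap is that the upper bound $\rad X\leq\pitwo$ is never proved: you correctly identify that one must exhibit a center $b_{0}\in B$ with $X\subset\bar{B}\left(b_{0},\pitwo\right)$, correctly note that $\ddot g+g\geq0$ only bounds distances from below, and then stop, labelling the remaining step ``the crux'' without carrying it out. A proof of the proposition must contain this half. For what it is worth, the paper dispatches it in one sentence (``Clearly $X\subset\bar{B}\left(b,\pitwo\right)$ for any $b\in B$''), and that sentence as literally stated is an overstatement -- in a join $A*\left[0,\pi\right]$ the endpoint $b=0$ fails, since the other endpoint of $B$ is at distance $\pi$ -- so the statement really does require an argument for a center $b_{0}$ of $B$. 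The missing mechanism is precisely the equality/rigidity phenomenon you suspected: the segment from the foot point $b^{*}$ to $b_{0}$ lies in $B$ by convexity and is therefore at constant distance $\pitwo$ from $a^{*}$, which is again the equality case of the comparison you used and forces a totally geodesic spherical lune via \cite{GM}; together with $\abs{b^{*}b_{0}}\leq\pitwo$ this is what delivers $\abs{xb_{0}}\leq\pitwo$. Since you neither supply this nor any substitute, your proposal establishes only $\rad X\geq\pitwo$, not the asserted equality.
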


\begin{proof}
Clearly $X\subset\bar{B}\left(b,\frac{\pi}{2}\right)$ for any $b\in B$.
We claim that for each $x\in X$, there exists $a\in A$ with $\abs{xa}=\frac{\pi}{2}$.
By assumption there is a geodesic $c:\left[0,\frac{\pi}{2}\right]\rightarrow X$
such that $c\left(0\right)\in A$, $c\left(\frac{\pi}{2}\right)\in B$,
and $x=c\left(t\right)$ for some $t$. Let $a\in A$ be chosen so
that $\abs{a\,c\left(0\right)}=\frac{\pi}{2}$. Since $A$ is at constant
distance $\frac{\pi}{2}$ from $c\left(\frac{\pi}{2}\right)$ we obtain
totally geodesic triangles that contain $c$ and $a$. In each of
these triangles the intrinsic distance from $x$ to $a$ is $\frac{\pi}{2}$.
These triangles are uniquely determined by a minimal geodesic from
$x$ to an interior point on a geodesic from $c\left(0\right)$ to
$a$ (see \cite{GM}). If we select the point very close to $a$,
then we obtain a contradiction provided $\abs{xa}<\frac{\pi}{2}$. 
\end{proof}
\begin{rem}
Note that any spherical join $A*B$ with $\rad B\leq\frac{\pi}{2}$
and $\rad A\geq\frac{\pi}{2}$ satisfies the conditions of the proposition.
Below we give examples which are quotients of such joins but not themselves
spherical joins. Moreover, in the special case of a join the conditions
are necessary in the sense that when $\rad A,\rad B<\pitwo$, then
$\rad A*B<\pitwo$. The sphere of radius $\frac{1}{2}$, $X=\bbS^{n}\left(\frac{1}{2}\right)$,
with $A,B$ being a pair of antipodal points is an example that has
$\rad X=\pitwo$; satisfies the first condition in proposition \ref{prop:rad=00003Dpi/2};
while $\rad A=\rad B=0$.
\end{rem}

We start with a simple example to show that one cannot always expect
to obtain a submetry $X\rightarrow\left[0,\frac{\pi}{2}\right]$ when
$\curv\geq1$ and $\diam=\frac{\pi}{2}$. 
\begin{example}
Consider an ellipse $X$ given by 
\[
\frac{x^{2}}{a^{2}}+\frac{y^{2}}{b^{2}}+\frac{z^{2}}{c^{2}}=1,
\]
where $1>a>b>c>0$. The smallest curvature is obtained at $z=\pm c$
and is given by $\frac{c^{2}}{a^{2}b^{2}}$. We select $c=\frac{1}{4}$
and $b=\frac{1}{3}$. In order to have curvature $\geq1$ we then
need $a<\frac{3}{4}$. The diameter of the ellipse is the distance
between the two points $x=\pm a$. This distance is half the perimeter
of the ellipse 
\[
\frac{x^{2}}{a^{2}}+\frac{z^{2}}{c^{2}}=1
\]
and can be estimated by 
\[
\frac{\pi}{2}\left(a+c\right)<\diam X<2\left(a+c\right).
\]
When $a=\frac{3}{4}$ we obtain $\diam X>\frac{\pi}{2}$, while $a=\frac{1}{3}$
gives $\diam X<\frac{7}{6}<\frac{\pi}{2}$. So for some $a\in\left(\frac{1}{3},\frac{4}{3}\right)$
we obtain an ellipse with $\curv\geq1$ and $\diam=\frac{\pi}{2}$.
This gives an example where there is no submetry $X\rightarrow\left[0,\frac{\pi}{2}\right]$. 
\end{example}

The remainder of the section contains various examples of Alexandrov
spaces with nonempty boundary, $\curv\geq1$, and $\rad=\frac{\pi}{2}$. 
\begin{example}
The most basic construction is a spherical join $S*E$, where $S$
has $\curv\geq1$, $\partial S\neq\emptyset$, $\rad S<\frac{\pi}{2}$;
and $E$ has $\curv\geq1$, $\partial E=\emptyset$, $\rad E\geq\frac{\pi}{2}$.
To see how we might obtain such a decomposition consider a spherical
join $\left[0,\pi\right]*\left[0,\pi\right]$ where both factors have
boundary and radius $\frac{\pi}{2}$. This space has radius $\frac{\pi}{2}$
and can be rewritten as follows: 
\begin{eqnarray*}
\left[0,\pi\right]*\left[0,\pi\right] & = & \left[0,\pi\right]*\left(\left\{ \frac{\pi}{2}\right\} *\left\{ 0,\pi\right\} \right)\\
 & = & \left(\left[0,\pi\right]*\left\{ \frac{\pi}{2}\right\} \right)*\left\{ 0,\pi\right\} \\
 & = & \left(\left[0,\frac{\pi}{2}\right]*\left\{ 0,\pi\right\} \right)*\left\{ 0,\pi\right\} \\
 & = & \left[0,\frac{\pi}{2}\right]*\left(\left\{ 0,\pi\right\} *\left\{ 0,\pi\right\} \right)\\
 & = & \left[0,\frac{\pi}{2}\right]*S^{1}\left(1\right).
\end{eqnarray*}
Similarly, we have 
\begin{eqnarray*}
\left[0,\alpha\right]*\left[0,\pi\right] & = & \left(\left[0,\alpha\right]*\left\{ \frac{\pi}{2}\right\} \right)*\left\{ 0,\pi\right\} \\
 & = & \Sigma_{1}\left(\left[0,\alpha\right]*\left\{ \frac{\pi}{2}\right\} \right).
\end{eqnarray*}
\end{example}

\begin{example}
Assume we have an example $X=S*E$ as above and a compact group $G$
that acts isometrically and effectively on $S$ and $E$. This action
is naturally extended to $S*E$ in such a way that it preserves the
slices $S\times\left\{ t\right\} \times E$ at constant distance from
$S$ and $E$. On these slices it is the diagonal action by $G$ on
$S\times E$. This leads to a new Alexandrov space $X/G$. Note that
$G$ preserves $\partial X$ and $\partial S$ and consequently also
the common soul of both spaces. In particular, $E/G\subset\left(\partial X\right)/G$
is at maximal distance $\frac{\pi}{2}$ from the soul and $E/G$ and
$S/G$ are dual sets in $X/G$. It follows from proposition \ref{prop:rad=00003Dpi/2}
that $\rad X/G\geq\frac{\pi}{2}$ provided $\rad E/G\geq\frac{\pi}{2}$.
Topologically, $S*E$ is a cone over $\partial X=\left(\partial S\right)*E$,
where the action fixes the soul and preserves the boundary. The quotient
is likewise a topological cone over $\left(\partial X\right)/G$. 
\end{example}

Below we offer some concrete examples of this construction. 
\begin{example}[Projective Lenses]
Consider $E=\left\{ 0,\pi\right\} $ and $S=\left[-\alpha,\alpha\right],\alpha<\pitwo$.
Let $G=\mathbb{Z}_{2}$ be the natural reflection on both spaces.
Note that $S*E=\Sigma_{1}\left[-\alpha,\alpha\right]=L_{2\alpha}^{2}$
looks topologically like a hemisphere. The action fixes $0\in\left[-\alpha,\alpha\right]$
and acts like the antipodal map on $\partial\left(S*E\right)=\Sigma_{1}\left\{ -\alpha,\alpha\right\} =\bbS^{1}\left(1\right)$.
The quotient looks topologically like a cone with vertex $0$. The
boundary has one point at distance $\frac{\pi}{2}$ from $0$. The
issue is that $\rad\left(E/G\right)<\frac{\pi}{2}$ and $\rad\left(X/G\right)<\frac{\pi}{2}$. 

More generally one can consider the $\mathbb{Z}_{2}$ quotient of
the Alexandrov lens $L_{2\alpha}^{n}=S^{n-2}\left(1\right)*\left[-\alpha,\alpha\right]$
(cf. also \cite{GL2}). When $n>2$, this space will have $\rad=\frac{\pi}{2}$
and boundary isometric to $\bbR\bbP^{n-1}$. 
\end{example}

\begin{example}[Edge with Nonempty Boundary]
Consider $S=\left(\Sigma_{1}\left[-\alpha,\alpha\right]\right)/\mathbb{Z}_{2}$
as above and define $X=\bbS^{1}\left(r\right)*S$, where $r\in\left[\frac{1}{2},1\right]$.
The soul of $X$ is the soul of $S$ and $E=C_{1}\bbS^{1}\left(r\right)$.
This is an example where $E$ has nonempty boundary.
\end{example}

Further examples that indicate the complexities in trying to classify
spaces with maximal radius can be obtained as follows: 
\begin{example}[Higher Dimensional Spines]
Select $E=S^{1}\left(1\right)$ and $S=B\left(p,r\right)\subset S^{2}\left(1\right)$,
where $r<\frac{\pi}{2}$. Let $G=\mathbb{Z}_{2}$ be a rotation by
$\pi$ on both $S$ and $E$. This gives a 4-dimensional example where
the boundary is homeomorphic to $\bbR\bbP^{3}$. The same can be done
with $E=S^{n}\left(1\right)$ and $S=B\left(p,r\right)\subset S^{m}\left(1\right)$,
and $G=\mathbb{Z}_{2}$ the antipodal map on both $S$ and $E$, giving
an $n+m+1$-dimensional example with boundary homeomorphic to $\bbR\bbP^{n+m}$.
\end{example}

\begin{example}[Spines with Soul on Boundary]
Select $E=S^{1}\left(1\right)$ and $S=B\left(p,r\right)\subset S^{2}\left(1\right)$,
where $r<\frac{\pi}{2}$. Let $G=\mathbb{Z}_{2}$ be a rotation by
$\pi$ on $E$ and a reflection on $S$. This gives a 4-dimensional
example where the boundary of $S/\mathbb{Z}_{2}$ is connected and
contains the soul. The boundary is homeomorphic to a suspension $\Sigma\bbR\bbP^{2}$.
As in the previous example we can choose $E=S^{n}\left(1\right)$
and $S=B\left(p,r\right)\subset S^{m}\left(1\right)$, with $G=\mathbb{Z}_{2}$
action on $E$ as the antipodal map and on $S$ as a reflection (or
any other isometric involution). The resulting example is $n+m+1$-dimensional
with boundary homeomorphic to $\Sigma^{m-1}\bbR\bbP^{n+1}$.

In both of these examples the key is that $S$ is an Alexandrov space
with curvature at least 1, non-empty boundary and radius $r<\frac{\pi}{2}$,
and with an isometric involution.
\end{example}

In corollary \ref{cor:leq4} we will show that above examples exhaust
all the possibilities in dimensions $\leq4$, while the next examples
shows that one can have more complex behavior in dimensions $\geq5$.
\begin{example}[Dual Pairs of Nonmaximal Dimension]
Select $E=S^{3}\left(1\right)$ and $S=B\left(p,r\right)\subset S^{2}\left(1\right)$,
where $r<\frac{\pi}{2}$. Let $G=S^{1}$ be the Hopf action on $E$
and rotation around $p$ on $S$. This gives a 5-dimensional example
that is not a finite quotient of a spherical join and with boundary
homeomorphic to $\bbC\bbP^{2}$. 
\end{example}

\section{Interior regularity and Lytchak's problem}

We need the following result for convex subsets of the standard sphere.
\begin{prop}
If $A\subset\mathbb{S}^{n-1}\left(1\right)$ is closed, convex and
has $\rad A\geq\frac{\pi}{2}$, then $\diam A=\pi$. 
\end{prop}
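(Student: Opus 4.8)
The plan is to show that a closed convex set $A \subset \mathbb{S}^{n-1}(1)$ with $\rad A \geq \frac{\pi}{2}$ must contain a pair of antipodal points, since a pair of antipodes realizes $\diam A = \pi$ and no larger diameter is possible on the unit sphere. First I would fix an arbitrary point $a_0 \in A$. The radius condition says that for every $x \in A$ there is some point of $A$ at distance $\geq \frac{\pi}{2}$ from $x$; in particular, applying this to $x = a_0$, there is a point $a_1 \in A$ with $|a_0 a_1| \geq \frac{\pi}{2}$. If $|a_0 a_1| = \pi$ we are done, so the real work is to promote a point at distance exactly $\frac{\pi}{2}$ to an honest antipode.

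The key geometric idea is to exploit convexity together with the structure of the round sphere. On $\mathbb{S}^{n-1}(1)$, convexity means $A$ is an intersection of closed hemispheres, equivalently $A = \{ x : \langle x, v \rangle \geq 0 \text{ for all } v \in K \}$ for some set $K$ of unit vectors (the poles of the supporting hemispheres), provided $A$ is contained in an open hemisphere-free configuration. I would instead argue directly: suppose for contradiction that $\diam A < \pi$, so no two points of $A$ are antipodal. Then I claim $A$ lies in an open hemisphere. Indeed, if $A$ meets no closed hemisphere's boundary pair antipodally and $A$ is convex with diameter $< \pi$, a standard separation/centroid argument on the sphere produces a point $p$ with $|px| < \frac{\pi}{2}$ for all $x \in A$: one can take the circumcenter of $A$, which exists and is unique for convex sets of diameter $< \pi$, and show its circumradius is $< \frac{\pi}{2}$. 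But then every $x \in A$ satisfies $|px|<\frac\pi2$, and taking $x=p$ (using $p\in A$, which follows from convexity and closedness once $A$ is in an open hemisphere) forces $\rad A < \frac{\pi}{2}$, contradicting the hypothesis.

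Carrying this out, the main obstacle is establishing the existence and the bound on the circumcenter cleanly in the spherical setting, where the relevant comparison is with curvature $1$ rather than $0$. I would realize $A \subset \mathbb{S}^{n-1}(1) = S_0 \mathbb{R}^n$ and use that a convex set of diameter $< \pi$ spans a convex cone $C_0 A \subset \mathbb{R}^n$ that is contained in an open halfspace; the axis of the smallest enclosing spherical ball is then obtained from the Euclidean circumcenter of this cone cross-section, and convexity of the distance-squared function $x \mapsto |px|^2$ on balls of radius $< \frac{\pi}{2}$ guarantees uniqueness of the minimizer $p$ of $\max_{x\in A}|px|$. The circumradius $\rho = \max_{x\in A}|px|$ then satisfies $\rho < \frac{\pi}{2}$: if $\rho = \frac{\pi}{2}$ the set of farthest points would have to surround $p$ so that $p$ lies in their convex hull, again producing an antipodal pair in $A$ and contradicting $\diam A < \pi$.

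Thus no such circumcenter argument can force $\rho < \frac{\pi}{2}$ while keeping $\rad A \geq \frac{\pi}{2}$, so the assumption $\diam A < \pi$ is untenable, and we conclude $\diam A = \pi$. The delicate point worth writing out carefully is precisely the last implication — that a convex set whose circumradius equals $\frac{\pi}{2}$ and whose circumcenter lies in the set must contain two antipodal points — since this is where convexity, closedness, and the sphere's geometry all combine.
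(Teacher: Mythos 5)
Your proposal is correct in substance but takes a genuinely different route from the paper. You reduce the statement to the dichotomy: either $A$ contains an antipodal pair (whence $\diam A=\pi$), or the cone over $A$ in $\mathbb{R}^{n}$ is a closed, convex, pointed cone, hence contained in an open half-space, so $A$ lies in a metric ball of radius $<\frac{\pi}{2}$; the circumcenter of $A$ then lies in $A$ and witnesses $\rad A<\frac{\pi}{2}$. The paper instead argues by induction on dimension: $A$ lies in a closed supporting hemisphere $H$, and either $H$ can be tilted so that $A$ lies in an open hemisphere (forcing $\rad A<\frac{\pi}{2}$), or $A\cap\partial H$ is again closed, convex, and of radius $\geq\frac{\pi}{2}$ inside the lower-dimensional sphere $\partial H$, terminating at $\mathbb{S}^{0}(1)$. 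Both arguments pivot on the same fact --- for closed convex $A$, $\rad A<\frac{\pi}{2}$ is equivalent to $A$ lying in an open hemisphere --- which the paper simply asserts and you actually prove; your approach therefore buys a more self-contained and dimension-free argument, at the cost of importing the convex-cone separation and circumcenter machinery. Two spots need tightening. First, your justification that $\rho<\frac{\pi}{2}$ via ``the farthest points surround $p$'' is both the shakiest step as written and unnecessary: once $A$ lies in an open hemisphere with pole $q$, compactness gives $\max_{x\in A}\abs{qx}<\frac{\pi}{2}$, and the circumradius is at most this. (Your version can be salvaged --- farthest points at distance exactly $\frac{\pi}{2}$ lie on the totally geodesic equator of $p$, so their convex hull cannot contain $p$ unless they already contain an antipodal pair --- but it is the long way around.) Second, the parenthetical claim that $p\in A$ is the step that actually carries the weight of converting ``$A$ lies in a small ball'' into ``$\rad A<\frac{\pi}{2}$ with center in $A$''; it deserves the explicit first-variation argument that the circumcenter lies in the convex hull of the set of farthest points, which is contained in $A$ by closedness and convexity. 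The preliminary musings about intersections of hemispheres and about promoting a point at distance $\frac{\pi}{2}$ to an antipode play no role and should be cut.
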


\begin{proof}
In case $\partial A=\emptyset$, the radius condition is redundant
and $A$ is totally geodesic unit sphere or two antipodal points.

In general note that $\rad A<\frac{\pi}{2}$ is equivalent to $A$
lying in an open hemisphere. Since $A$ is convex we have that it
lies in a closed hemisphere $H$. In case $\rad\left(A\cap\partial H\right)<\frac{\pi}{2}$
it follows that we can move $H$ so that $A$ lies in an open hemisphere.
In this way we obtain a convex subset $A\cap\partial H\subset\partial H$
inside a lower dimensional sphere with radius $\geq\frac{\pi}{2}$.
This ultimately reduces the problem to the trivial case: $A\subset\mathbb{S}^{0}\left(1\right)$,
where the radius condition forces $A=\mathbb{S}^{0}\left(1\right)$. 
\end{proof}
\begin{cor}
If $S_{s}X=\mathbb{S}^{n-1}\left(1\right)$, then $\diam E^{\prime}=\pi$. 
\end{cor}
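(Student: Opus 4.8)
The plan is simply to apply the preceding proposition to the set $A = E'$. All three hypotheses needed for that proposition are already established for $E'$ in Lemma~\ref{lem:gradexp}, so the corollary should follow in a single step with no new ideas required.

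Concretely, I would first recall from Lemma~\ref{lem:gradexp} that $E' \subset S_s X$ is closed, convex, and satisfies $\rad E' \geq \frac{\pi}{2}$. Under the standing hypothesis of the corollary, $S_s X = \mathbb{S}^{n-1}(1)$, so $E'$ is a closed, convex subset of the round unit sphere with radius at least $\frac{\pi}{2}$. These are exactly the hypotheses of the proposition with $A = E'$, and invoking it immediately gives $\diam E' = \pi$. The one point worth a half-sentence of care is that convexity of $E'$ as a subset of $S_s X$ is the same as convexity inside $\mathbb{S}^{n-1}(1)$, which is automatic since $S_s X$ and $\mathbb{S}^{n-1}(1)$ are isometric as metric spaces and hence share the same geodesics.

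There is no real obstacle here: the substantive work has already been carried out in Lemma~\ref{lem:gradexp} (which produces the convex set $E'$ with the radius bound via the parallel-translation rigidity argument) and in the proposition (which handles the spherical geometry via the hemisphere-reduction induction). The corollary is only the bookkeeping step that combines them. I would therefore keep the proof to one or two sentences, noting in passing that since $\diam E' = \diam E$ by Lemma~\ref{lem:gradexp}, one also obtains $\diam E = \pi$, which is the form used later in the Inner Regularity and Maximal Volume Theorems.
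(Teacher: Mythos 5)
Your proof is correct and is essentially identical to the paper's: both apply the preceding proposition to $A=E'$, using the facts from Lemma \ref{lem:gradexp} that $E'\subset S_{s}X=\mathbb{S}^{n-1}\left(1\right)$ is closed, convex, and has $\rad E'\geq\frac{\pi}{2}$. Nothing is missing.
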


\begin{proof}
As $E^{\prime}\subset\mathbb{S}^{n-1}\left(1\right)$ is closed, $\rad E^{\prime}\geq\frac{\pi}{2}$,
and convex, the result follows from the previous proposition. 
\end{proof}
We are now ready to complete the proof of the Interior Regularity
Theorem from the introduction. 
\begin{thm}
\label{thm:reg_soul}Let $X$ be an $n$-dimensional Alexandrov space
with $\curv\geq1$ and $\partial X\ne\emptyset$. If $\rad X=\frac{\pi}{2}$
and the soul of $X$ is a regular point, then $X$ is isometric to
a spherical join $\mathbb{S}^{k}\left(1\right)*S$ , where $S$ is
an $\left(n-k-1\right)$-dimensional Alexandrov space with $\curv\geq1$,
$\partial S\ne\emptyset$, and $\rad S<\frac{\pi}{2}$. 
\end{thm}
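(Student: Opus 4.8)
The plan is to reduce everything to the roundness $S_sX=\mathbb{S}^{n-1}(1)$ that regularity supplies, and then to peel off a round join factor. First I would record the consequence of regularity: the soul $s$ being regular means $T_sX=\mathbb{R}^n$, i.e.\ $S_sX=\mathbb{S}^{n-1}(1)$. By Lemma~\ref{lem:gradexp} the edge $E$ is isometric, through the spherical gradient exponential map, to the closed convex set $E'\subset S_sX=\mathbb{S}^{n-1}(1)$ with $\rad E'\geq\frac{\pi}{2}$, and the corollary just above then gives $\diam E'=\pi$. Hence $\diam E=\pi$, and since $\diam E\le\diam X\le\pi$ we also obtain $\diam X=\pi$.

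The core of the argument is to show that $X$ splits as the join $E*S$ of the two dual sets of Proposition~\ref{prop:dual}, with $E$ a round sphere. Since $s\in S$ is the soul of $S$, once the join is established the space of directions splits as $S_sX=E*S_sS$; the hypothesis that $S_sX$ is the \emph{round} sphere $\mathbb{S}^{n-1}(1)$ then forces both join factors to be totally geodesic subspheres, so that $E=E'=\mathbb{S}^k(1)$ with $k=\dim E$. I expect this to be exactly the step where regularity is indispensable: a closed convex subset of a round sphere with $\rad\ge\frac{\pi}{2}$ need \emph{not} be a subsphere---a closed hemisphere is a counterexample, with $\rad=\frac{\pi}{2}$ and $\diam=\pi$---so the conclusion $E=\mathbb{S}^k(1)$ cannot be read off from the convexity and radius of $E'$ alone, but only from the roundness of the ambient join $S_sX$.

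To build the join I would argue inductively via maximal-diameter rigidity. Choosing $e_\pm\in E$ with $|e_+e_-|=\pi$, the soul lies on a minimal geodesic between them, so $\overrightarrow{se_+}$ and $\overrightarrow{se_-}$ are antipodal in $S_sX$, and $\diam X=\pi$ yields a suspension splitting $X=\Sigma_1X_1=\{e_+,e_-\}*X_1$ with $X_1=\{x:|xe_+|=|xe_-|=\frac{\pi}{2}\}$ convex. The point is that $X_1$ should inherit the whole hypothesis: it has nonempty boundary, $\rad X_1=\frac{\pi}{2}$, its edge is $E\cap X_1$ and its soul is again $s$, and---because the round $\mathbb{S}^{n-1}$ de-suspends to the round $\mathbb{S}^{n-2}$---the soul of $X_1$ is again regular. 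Iterating peels off antipodal pairs as long as the current edge is positive dimensional (hence again of diameter $\pi$); after $k+1$ steps the accumulated suspensions reassemble into a round factor $\mathbb{S}^k(1)=E$, and the process terminates exactly when the leftover factor $S$ has edge of diameter $<\pi$, i.e.\ $\rad S<\frac{\pi}{2}$.

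It then remains to verify that $S$ has the advertised properties and to dispose of the degenerate cases. By Proposition~\ref{prop:dual}(2) we have $\rad S<\frac{\pi}{2}$; when $\dim S>0$, Lemma~\ref{lem:footpoint} gives $\partial S\neq\emptyset$, while the extreme cases $\dim E=0$ and $\dim S=0$ are the suspension $X=\Sigma_1S$ and the cone $X=C_1E$ already handled by Corollary~\ref{prop:dimE=00003D0} and Proposition~\ref{prop:dimS=00003D0}. The main obstacle I anticipate is making the induction genuinely rigorous: one must check that the maximal-diameter splitting is compatible with the soul, edge, and spine structure at each stage---that the edge of $X_1$ really is $E\cap X_1$ and that regularity of the soul is honestly inherited---and that the $k+1$ suspension factors reassemble \emph{isometrically} into a single round $\mathbb{S}^k(1)$, not merely topologically. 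Equivalently, one must justify the join rigidity $X=E*S$ directly, which is precisely where the roundness of $S_sX$ has to be fed in so as to exclude the hemisphere-type factors.
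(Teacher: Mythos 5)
Your proposal is correct and follows essentially the same route as the paper: regularity gives $S_sX=\mathbb{S}^{n-1}(1)$, whence $\diam E'=\diam E=\diam X=\pi$ by the convexity proposition and Lemma~\ref{lem:gradexp}, then maximal-diameter rigidity splits off a suspension whose equatorial slice inherits nonempty boundary, radius $\frac{\pi}{2}$, and a regular soul (because the round sphere de-suspends to a round sphere), and iterating until the radius drops below $\frac{\pi}{2}$ assembles the $\mathbb{S}^k(1)$ join factor. The only inaccuracy is the side claim that the resulting round factor equals the edge $E$ --- in general $\mathbb{S}^k(1)\subsetneq E$ (e.g.\ the ``Edge with Nonempty Boundary'' example has $E=C_1\mathbb{S}^1(r)$) --- but this does not affect the argument.
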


\begin{proof}
The assumption that the soul is regular means that $S_{s}X=\mathbb{S}^{n-1}\left(1\right)$.
From the preceding corollary we know that $X=\Sigma_{1}S_{p}X$ for
some $p\in E\subset\partial X$. Further note that the soul $s$ lies
in the slice $\left\{ \frac{\pi}{2}\right\} \times S_{p}X$ and also
corresponds to the soul of $S_{p}X$. Moreover as 
\[
\mathbb{S}^{n-1}\left(1\right)=S_{s}X=S_{s}\Sigma_{1}S_{p}X=\Sigma_{1}S_{s}S_{p}X
\]
it follows that $S_{p}X$ also has the property that its soul is a
regular point. When $\rad S_{p}X<\frac{\pi}{2}$ we have obtained
the desired decomposition, otherwise the construction can be iterated
until one reaches the desired decomposition. 
\end{proof}
From this we deduce the rigidity part of Lytchak's problem by first
observing the following reformulation of the results in \cite[3.3.5]{Pe}. 
\begin{prop}
\label{prop:Lytchak}Let $X$ be an $n$-dimensional Alexandrov space
with $\curv\geq1$, $\partial X\neq\emptyset$, and $s$ the soul
of $X$. If $\vol_{n-1}\partial X=\vol_{n-1}\mathbb{S}^{n-1}\left(1\right)$,
then $\rad X=\frac{\pi}{2}$, $X=\bar{B}\left(s,\frac{\pi}{2}\right)$,
\[
\gexp_{s}\left(1;S_{s}X\right)=\partial X=\partial\bar{B}\left(s,\frac{\pi}{2}\right),
\]
and $S_{s}X$ is isometric to $\mathbb{S}^{n-1}\left(1\right)$. 
\end{prop}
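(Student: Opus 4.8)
The inequality $\vol_{n-1}\partial X\le\vol_{n-1}\mathbb{S}^{n-1}(1)$ is Petrunin's theorem \cite[3.3.5]{Pe}, so the entire content here is the rigidity in the equality case. My plan is to run the proof of that inequality through the spherical gradient exponential map at the soul and then force every estimate it uses to be sharp. Recall $X\subset\bar{B}(s,\pi/2)$, so $\gexp_s(1;\cdot):C_1(S_sX)\to X$ is onto and distance nonincreasing. I would organize the bound as a chain
\[
\vol_{n-1}\partial X\le\vol_{n-1}S_sX\le\vol_{n-1}\mathbb{S}^{n-1}(1),
\]
where the right inequality is the standard maximal volume bound for an $(n-1)$-dimensional Alexandrov space with $\curv\ge1$, and the left inequality is realized by the restriction of $\gexp_s(1;\cdot)$ to the outer sphere $S_sX\times\{\pi/2\}\cong S_sX$, which is a distance nonincreasing map onto $\partial X$. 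This surjection is the geometric core of \cite[3.3.5]{Pe}.

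Assuming now $\vol_{n-1}\partial X=\vol_{n-1}\mathbb{S}^{n-1}(1)$, both inequalities become equalities. From the right equality and the rigidity of the maximal volume bound I obtain that $S_sX$ is isometric to $\mathbb{S}^{n-1}(1)$; in particular the soul is a regular point. From the left equality, the distance nonincreasing surjection $S_sX\to\partial X$ preserves $(n-1)$-dimensional volume between spaces of the same dimension and the same finite volume, which forces it to be an isometry whose image is all of $\partial X$. Hence $\gexp_s(1;S_sX)=\partial X$ and $\partial X$ is isometric to $S_sX\cong\mathbb{S}^{n-1}(1)$.

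It remains to pin down $X$ and the radius. Since $\gexp_s(1;\cdot)$ is onto and $X\subset\bar{B}(s,\pi/2)$, we get $X=\bar{B}(s,\pi/2)$. Because the outer sphere maps into $\partial X$ at radial parameter $\pi/2$ and the map is $1$-Lipschitz with $\gexp_s(1;o_s)=s$, each boundary point lies at distance exactly $\pi/2$ from $s$; conversely any $x$ with $|xs|=\pi/2$ has every $\gexp_s$-preimage at radial parameter $\ge\pi/2$, hence equal to $\pi/2$, so $x$ is an image of the outer sphere and lies in $\partial X$. Thus $\partial X=\partial\bar{B}(s,\pi/2)$. Finally, for $\rad X=\pi/2$ I would argue as follows: for any $c\in X$ pick a geodesic direction $v=\overrightarrow{sc}\in S_sX=\mathbb{S}^{n-1}(1)$ and its antipode $-v$. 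The radial curve $t\mapsto\gexp_s(1;(-v,t))$ is a minimal geodesic, being a $1$-Lipschitz image whose endpoint $e\in\partial X$ realizes the parameter length $\pi/2$, so $\overrightarrow{se}=-v$ and $\angle(\overrightarrow{sc},\overrightarrow{se})=\pi$. Toponogov's hinge estimate then gives $|ce|\ge\pi/2$. Hence no point is a center of radius $<\pi/2$, and together with $\rad X\le\pi/2$ this yields $\rad X=\pi/2$.

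The main obstacle is the left inequality together with its rigidity: showing that $\partial X$ is covered distance nonincreasingly and \emph{surjectively} by the outer sphere of the soul's tangent cone, and that equality upgrades this to an isometry onto all of $\partial X$. This is precisely Petrunin's theorem \cite[3.3.5]{Pe} and its equality discussion. Some care is also needed because the a priori structural results of Section 2 (Proposition \ref{prop:E}, Lemma \ref{lem:gradexp}, and the surrounding statements) presuppose $\rad X=\pi/2$, whereas here that identity is only an output; the equality analysis must therefore be carried out directly from the gradient exponential map rather than by invoking those lemmas.
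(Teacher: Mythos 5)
Your overall strategy is the paper's: push Petrunin's volume chain through $\gexp_{s}\left(1;\cdot\right)$ and force every inequality to be an equality, with the surjection of the outer sphere onto $\partial X$ imported from \cite[3.3.5]{Pe}. The equality analysis for $\gexp_{s}\left(1;S_{s}X\right)=\partial X$ (no open set of the outer sphere can map into $\mathrm{int}X$) and for $S_{s}X\simeq\mathbb{S}^{n-1}\left(1\right)$ matches the paper. However, there is a genuine gap in how you recover $\rad X=\frac{\pi}{2}$ and $\partial X=\partial\bar{B}\left(s,\frac{\pi}{2}\right)$. You assert that ``each boundary point lies at distance exactly $\frac{\pi}{2}$ from $s$'' because the map is $1$-Lipschitz with $\gexp_{s}\left(1;o_{s}\right)=s$; but $1$-Lipschitzness only gives $\abs{sx}\leq\frac{\pi}{2}$, which is the trivial inclusion $X\subset\bar{B}\left(s,\frac{\pi}{2}\right)$ you already had. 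The subsequent hinge argument for the radius is built on this unproven ``exactly'': you need the radial curve $t\mapsto\gexp_{s}\left(1;\left(-v,t\right)\right)$ to be a \emph{minimal} geodesic of length $\frac{\pi}{2}$, and that is precisely what does not follow from the map being distance nonincreasing. The paper closes this hole differently: it runs the volume chain at $r=\rad X$ rather than at $\frac{\pi}{2}$, using that if $\rad X=r$ then $X=\bar{B}\left(p,r\right)$ for some $p$ and $\partial X\subset\gexp_{p}\left(1;\partial\bar{B}\left(o_{p},r\right)\right)$, so that
\[
\vol_{n-1}\partial X\leq\vol_{n-1}\partial\bar{B}\left(o_{p},r\right)=\sin^{n-1}\left(r\right)\vol_{n-1}S_{p}X\leq\sin^{n-1}\left(r\right)\vol_{n-1}\mathbb{S}^{n-1}\left(1\right),
\]
which is strictly less than $\vol_{n-1}\mathbb{S}^{n-1}\left(1\right)$ unless $r=\frac{\pi}{2}$. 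This yields $\rad X=\frac{\pi}{2}$ directly from the hypothesis, with no auxiliary geodesic argument.

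A second, lesser issue: you claim the volume-preserving $1$-Lipschitz surjection $S_{s}X\rightarrow\partial X$ must be an isometry. That rigidity statement is usually proved for maps between Alexandrov spaces, and the paper explicitly cautions that $\partial X$ is not known a priori to be an Alexandrov space; in any case the proposition does not need it (the intrinsic identification of $\partial X$ is obtained afterwards from Theorem \ref{thm:reg_soul}). I would drop that claim and replace your radius argument by the paper's $\sin^{n-1}\left(r\right)$ deficit at $r=\rad X$.
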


\begin{proof}
It follows from Petrunin's solution to Lytchak's problem \cite[3.3.5]{Pe}
that $\rad X\leq\frac{\pi}{2}$. Similarly, if $r\leq\frac{\pi}{2}$
and $X=\bar{B}\left(p,r\right)$ for some $p\in M$, then $\partial X\subset\gexp_{p}\left(1;\partial\bar{B}\left(p,r\right)\right)$.
In particular, as $\gexp_{p}\left(1;\cdot\right)$ is distance nonincreasing:
\begin{eqnarray*}
\vol_{n-1}\left(\partial X\right) & \leq & \vol_{n-1}\left(\gexp_{p}\left(1;\partial\bar{B}\left(o_{p},r\right)\right)\right)\\
 & \leq & \vol_{n-1}\left(\partial\bar{B}\left(o_{p},r\right)\right)\\
 & \leq & \vol_{n-1}\mathbb{S}^{n-1}\left(1\right).
\end{eqnarray*}
Here equality can only hold when $r=\frac{\pi}{2}$ and in that case
we can use $X=\bar{B}\left(s,\frac{\pi}{2}\right)$. Moreover, $\partial X=\gexp_{s}\left(1;\partial\bar{B}\left(s,\frac{\pi}{2}\right)\right)$
as otherwise $\partial\bar{B}\left(o_{s},\frac{\pi}{2}\right)\cap\gexp_{s}^{-1}\left(\mathrm{int}X\right)$
is a nonempty open set and that forces $\vol_{n-1}\left(\partial X\right)<\vol_{n-1}\left(\gexp_{s}\left(1;\partial\bar{B}\left(o_{s},\frac{\pi}{2}\right)\right)\right)$.
Finally, we know that $S_{s}X=\partial\bar{B}\left(o_{s},\frac{\pi}{2}\right)$
also has maximal volume, showing that $S_{s}X$ is isometric to $S^{n-1}\left(1\right)$. 
\end{proof}
From \ref{thm:reg_soul} it follows that when $\vol_{n-1}\partial X=\vol_{n-1}\mathbb{S}^{n-1}\left(1\right)$,
then $\partial X$ is isometric to $\mathbb{S}^{k}\left(1\right)*\partial S$
, where $S$ is an $\left(n-k-1\right)$-dimensional Alexandrov space
with $\curv\geq1$. This is isometric to $\mathbb{S}^{n-1}\left(1\right)$
if and only if $S=[0,\alpha]$. Consequently, we have answered Lytchak's
problem as in the Maximal Volume Theorem from the introduction.
\begin{cor}
\label{cor:MaxVol} Let $X$ be an $n$-dimensional Alexandrov space
with $\curv\geq1$ and nonempty boundary. If $\vol\partial X=\vol\mathbb{S}^{n-1}\left(1\right)$,
then $X$ is isometric to $L_{\alpha}^{n}$ for some $0<\alpha\leq\pi$. 
\end{cor}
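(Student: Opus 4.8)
The plan is to feed the volume hypothesis into Proposition \ref{prop:Lytchak} and then invoke the Inner Regularity Theorem \ref{thm:reg_soul}. First I would note that since $\vol\partial X=\vol\mathbb{S}^{n-1}\left(1\right)$, Proposition \ref{prop:Lytchak} gives $\rad X=\pitwo$, shows (in the equality case of its volume estimate) that $\partial X$ is isometric to $\mathbb{S}^{n-1}\left(1\right)$, and shows that $S_{s}X$ is isometric to $\mathbb{S}^{n-1}\left(1\right)$. In particular the tangent cone $T_{s}X=C_{0}S_{s}X$ is Euclidean, so the soul is a regular point and Theorem \ref{thm:reg_soul} applies: $X$ is isometric to a spherical join $\mathbb{S}^{k}\left(1\right)*S$, where $S$ is an $\left(n-k-1\right)$-dimensional Alexandrov space with $\curv\geq1$, $\partial S\neq\emptyset$, and $\rad S<\pitwo$.

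Next I would identify the boundary of this join. Since $\mathbb{S}^{k}\left(1\right)$ is boundaryless, the boundary of the join splits off the boundary of the second factor, $\partial X=\mathbb{S}^{k}\left(1\right)*\partial S$, and by the first paragraph this must be isometric to $\mathbb{S}^{n-1}\left(1\right)$. The key geometric observation is that a spherical join $\mathbb{S}^{k}\left(1\right)*Y$ can be isometric to a round sphere only when $Y$ itself is a round sphere: in the join the factor $\mathbb{S}^{k}\left(1\right)$ sits as a totally geodesic subsphere at join-parameter $0$, and $Y$ appears as its polar set $\{x:\left|x\,\mathbb{S}^{k}\right|=\pitwo\}$ at join-parameter $\pitwo$, embedded isometrically. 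In $\mathbb{S}^{n-1}\left(1\right)$ the polar set of a totally geodesic $\mathbb{S}^{k}$ is the complementary totally geodesic subsphere. Hence $\partial S$ is isometric to $\mathbb{S}^{n-k-2}\left(1\right)=\mathbb{S}^{\dim S-1}\left(1\right)$; that is, the boundary of $S$ has maximal volume.

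The conclusion then comes from a dimension reduction. I would re-apply the volume half of Lytchak's problem (Proposition \ref{prop:Lytchak}, resting on Petrunin's bound) to $S$ itself. If $\dim S\geq2$, then maximal boundary volume forces $\rad S=\pitwo$, contradicting $\rad S<\pitwo$. Since $\partial S\neq\emptyset$ rules out $\dim S=0$, we are left with $\dim S=1$. A one-dimensional Alexandrov space with $\curv\geq1$, exactly two boundary points, and $\rad S<\pitwo$ is an interval $S=[0,\alpha]$ of length $\alpha=2\rad S<\pi$; then $k=n-2$ and $X=\mathbb{S}^{n-2}\left(1\right)*[0,\alpha]=L_{\alpha}^{n}$.

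The main obstacle I anticipate is twofold. The conceptual heart is the ``de-joining'' step of the second paragraph: one must argue rigorously that the equality $\mathbb{S}^{k}\left(1\right)*Y\cong\mathbb{S}^{n-1}\left(1\right)$ propagates to the factor $Y$, i.e. that the polar of a totally geodesic subsphere is itself a round subsphere carrying the induced join metric, so that $\partial S$ genuinely has maximal volume and the reduction to $S$ is legitimate. The delicate endpoint is the hemisphere $\alpha=\pi$, namely $L_{\pi}^{n}$: here $\rad S=\pitwo$ sits exactly on the boundary of the hypothesis of Theorem \ref{thm:reg_soul}, so this value escapes the strict decomposition $\rad S<\pitwo$ and must be recovered separately, as the degenerate limit $X=\mathbb{S}^{n-1}\left(1\right)*\{\mathrm{pt}\}=C_{1}\mathbb{S}^{n-1}\left(1\right)$, the spherical cone over the equatorial sphere. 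This supplies the remaining value $\alpha=\pi$ and completes the stated range $0<\alpha\leq\pi$.
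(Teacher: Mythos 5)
Your proof follows the paper's own route exactly: Proposition \ref{prop:Lytchak} makes the soul regular, Theorem \ref{thm:reg_soul} yields the join $X=\mathbb{S}^{k}\left(1\right)*S$, and the roundness of $\partial X=\mathbb{S}^{k}\left(1\right)*\partial S$ forces $S=\left[0,\alpha\right]$. The paper compresses your second and third paragraphs into the single assertion that $\mathbb{S}^{k}\left(1\right)*\partial S$ is isometric to $\mathbb{S}^{n-1}\left(1\right)$ if and only if $S=\left[0,\alpha\right]$; your dimension reduction via a second application of Petrunin's volume bound (and your separate treatment of the hemisphere endpoint $\alpha=\pi$) simply supplies details the paper leaves implicit.
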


These results can be used to complement (if not complete) the main
theorem in \cite{GL2}.
\begin{prop}
If $X^{n}$ is an Alexandrov space with $\curv\geq1$, $\rad X=\pitwo$,
and boundary $\partial X=M$ that is a Riemannian manifold and a topological
sphere with $\sec\ge1$, then $M=\mathbb{S}^{n-1}\left(1\right)$.
Consequently, $X$ is an Alexandrov lens.
\end{prop}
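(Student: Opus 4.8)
The plan is to funnel everything into the Maximal Volume Corollary \ref{cor:MaxVol}: once one knows that the induced metric on $M$ is the round one, i.e. $M=\mathbb{S}^{n-1}(1)$, then $\vol\partial X=\vol\mathbb{S}^{n-1}(1)$ and \ref{cor:MaxVol} (via \ref{prop:Lytchak}) immediately identifies $X$ with a lens $L_\alpha^n$. So the whole content is to promote the hypothesis ``$M$ is a topological sphere carrying a Riemannian metric with $\sec\ge1$'' to ``that metric is round.'' The natural route is through the Inner Regularity Theorem \ref{thm:reg_soul}: it suffices to show that the soul $s$ is a \emph{regular} point, i.e. $S_sX=\mathbb{S}^{n-1}(1)$. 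Granting this, \ref{thm:reg_soul} splits $X=\mathbb{S}^k(1)*S$ with $\rad S<\pitwo$, so $\partial X=\mathbb{S}^k(1)*\partial S$; a spherical join can carry a smooth Riemannian metric only when it is a round sphere, which forces $\partial S$ to be a round sphere, and then applying \ref{cor:MaxVol} to the lower-dimensional factor $S$ makes $S$ a lens and hence $X=\mathbb{S}^k(1)*S$ a lens with $\partial X=\mathbb{S}^{n-1}(1)$.

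To attack the regularity of the soul I would exploit the dual sets inside the now-smooth boundary. By Proposition \ref{prop:dual} the edge $E$ and the set $F=S\cap\partial X$ are closed and $\pi$-convex in $M$, and by Proposition \ref{prop:E} the intrinsic and extrinsic metrics of $E$ coincide with $\rad E\ge\pitwo$. Since $M$ is a genuine Riemannian manifold, total convexity (Cheeger--Gromoll) upgrades $E$ and $F$ to smooth totally geodesic submanifolds, whose induced metrics again have $\sec\ge1$. Lemma \ref{lem:gradexp} identifies $E$ isometrically with the convex set $E'\subset S_sX$ and yields $\diam E=\diam E'\le\diam X\le\pi$; moreover the soul is regular precisely when $E'$, hence $E$, attains diameter $\pi$ and thereby suspends $S_sX$. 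Thus the problem is to force maximal diameter on $E$, which I would try to do by a join-rigidity argument: when $\dim S>0$ the sets $E$ and $F$ are complementary, every point of $M$ should lie on a minimal $\pitwo$-geodesic from $E$ to $F$ with $|ef|=\pitwo$ for $e\in E,\ f\in F$, so the hypotheses of Proposition \ref{prop:rad=00003Dpi/2} hold \emph{inside} $M$, and a Toponogov rigidity of spherical-join type ought to identify $M$ isometrically with $E*F$. Smoothness of $M$ then forces this join to be round, giving $M=\mathbb{S}^{n-1}(1)$ at once.

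I expect the genuine obstacle to be the degenerate case $\dim S=0$. Here $E=\partial X$, Proposition \ref{prop:dimS=00003D0} gives $X=C_1M$, and Lemma \ref{lem:gradexp} makes $S_sX$ isometric to $M$ itself, so there is no complementary factor and no join to exploit; one is reduced to showing directly that a smooth topological sphere $M$ with $\sec\ge1$ and $\rad M\ge\pitwo$ bounding such a cone must be round. This cannot follow from the intrinsic Riemannian geometry of $M$ alone --- the subtle point is precisely to upgrade $\diam M\le\pi$ to $\diam M=\pi$ so that Toponogov's maximal-diameter rigidity applies --- and it is exactly where the full strength of the smooth boundary, together with the rigidity that complements \cite{GL2}, has to be invoked. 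I would therefore isolate this cone case and treat the extra input from \cite{GL2} as the essential ingredient, with the join case $\dim S>0$ handled more routinely by the rigidity sketched above.
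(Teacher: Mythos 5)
The central gap is that you never supply the mechanism that actually does the work. The paper does not try to show that the soul of $X$ is a regular point (a statement essentially equivalent to the conclusion); instead it applies the Inner Regularity Theorem \ref{thm:reg_soul} to the edge $E$ itself: $E$ is a convex subset of the smooth Riemannian manifold $M$, hence a smooth Alexandrov space whose own soul is automatically regular, and when $\partial E\neq\emptyset$ it has $\rad E=\pitwo$, so \ref{thm:reg_soul} applied to $E$ makes it a join with a unit sphere, forcing $\diam E=\pi$, hence $\diam M=\pi$, and Toponogov's maximal diameter theorem gives $M=\mathbb{S}^{n-1}\left(1\right)$. Your substitute criterion --- that the soul of $X$ is regular ``precisely when'' $E^{\prime}$ attains diameter $\pi$ --- is only true in the direction you do not need: $S_{s}X=\mathbb{S}^{n-1}\left(1\right)$ does imply $\diam E^{\prime}=\pi$, but $\diam E^{\prime}=\pi$ only exhibits $X$ as a spherical suspension and says nothing about regularity of the soul (consider $\Sigma_{1}S$ with $S_{s}S$ non-round). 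Likewise, the ``Toponogov rigidity of spherical-join type'' that is supposed to identify $M$ with $E*F$ is not a comparison argument: $\bbC\bbP^{m}$ contains dual totally geodesic submanifolds at distance $\pitwo$ satisfying all the incidence conditions you list, yet admits no join splitting, so the step is false without using that $M$ is a topological sphere. What the paper actually invokes in this case is the Grove--Gromoll diameter rigidity structure of \cite{GG} (uniqueness of minimal connections off $E\cup\left(S\cap M\right)$, Riemannian submersions from normal spheres) together with Wilking's classification \cite{Wi}; that is the real engine of the second case, and your sketch omits it.

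Your instinct about the degenerate case is sound, but you do not close it. When $\dim S=0$ one has $E=\partial X=M$ and $X=C_{1}M$ by proposition \ref{prop:dimS=00003D0}, and the hypotheses then reduce to: $M$ is a smooth topological sphere with $\sec\geq1$ and $\rad M\geq\pitwo$. Deferring to \cite{GL2} is not an argument, and no argument from this data alone can succeed: a small perturbation of the round metric, rescaled to restore $\sec\geq1$, still has radius greater than $\pitwo$, and the spherical cone over it satisfies every hypothesis of the proposition with $M$ non-round. (The paper's own proof tacitly assumes at the corresponding point that the dual submanifolds of $M$ are nontrivial, so you have in fact put your finger on a case that requires separate treatment or an added hypothesis.) As written, then, the proposal is not a proof: the first case rests on a false equivalence, the second on an unproved and non-elementary rigidity statement, and the third on a deferral.
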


\begin{proof}
Suppose $\partial E\ne\emptyset$. Since $E$ is a $\pi$-convex subset
of $M$ with $\rad E\geq\pitwo$ it must have radius $\pitwo$ and
by the Regularity Theorem \ref{thm:reg_soul} $E$ becomes a join
with a unit sphere. In particular,
\[
\pi=\diam E\leq\diam M
\]
and by Toponogovs maximal diameter theorem $M=\mathbb{S}^{n-1}\left(1\right)$.

It remains to consider the case where $E$ is a smooth totally geodesic
submanifold of $M$ with $\rad\ge\pitwo$. Now $M\cap S$ is a $\pi$-convex
subset of $M$ and dual to $E$ in the sense of \cite{GG}. The arguments
in \cite{GG} show that $M\cap S$ is a smooth totally geodesic submanifold
of $M$ without boundary. Since $M$ is topologically a sphere with
nontrivial dual submanifolds it follows again from \cite{GG} that
all points in $M-(E\cup(S\cap M))$ lie on a unique minimal geodesic
of length $\pitwo$ from $E$ to $S\cap M$. Further, for each $x\in S\cap M$
the corresponding map from the normal sphere to $S\cap M$ at $x$
to $E$ is a Riemannian submersion (and likewise for points in $E$).
The classification of Riemannian submersions from spheres (cf. \cite{GG,Wi})
and the fact that $M$ is a topological sphere implies that $M=\mathbb{S}^{n-1}\left(1\right)$. 
\end{proof}

\section{Rigidity from Topology }

In this section we discuss several more general results for Alexandrov
spaces $X^{n}$ with $\curv\ge1$, $\partial X\ne\emptyset$, and
maximal radius $\rad=\pitwo$. These include the generalizations to
the inner regularity theorem mentioned in the introduction and lead
to a classification in dimensions $\leq4$.

For the purposes of this section we shall need an improved dual set
decomposition of $X$.
\begin{prop}
For an Alexandrov space $X^{n}$ with $\curv\ge1$, $\partial X\neq\emptyset$,
and $\rad X=\pitwo$ there exists a dual space decomposition $\hat{E},\hat{S}\subset X$
with the properties that $\hat{E}\subset E$, $\partial\hat{E}=\emptyset$,
$\rad\hat{E}\geq\pitwo$, $S\subset\hat{S}$, and $\partial\hat{S}\neq\emptyset$. 
\end{prop}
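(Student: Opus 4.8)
The plan is to regard $(E,S)$ as an already mutually dual pair and then refine it. Writing $A^{*}=\{x\in X:\abs{xA}=\pitwo\}$, note that $S=E^{*}$ by definition, and conversely $E=S^{*}$: if $x\in S$ then $\abs{xe}\ge\pitwo$ for all $e\in E$ with equality at the soul $s$, so every $e\in E$ has $\abs{eS}=\pitwo$, while $X\subset\bar B(s,\pitwo)$ forces the reverse. I would dispose of the degenerate cases first. If $\dim E=0$ then $X=\Sigma_{1}S$ (Corollary \ref{prop:dimE=00003D0}) and if $\dim S=0$ then $X=C_{1}E$ with $E=\partial X$ (Proposition \ref{prop:dimS=00003D0}); the conclusion is then immediate (or vacuous in the fully collapsed cone-over-$\bbS^{0}$ situation), so from now on I assume $\dim E,\dim S>0$, equivalently $E\neq\partial X$ (Lemma \ref{lem:footpoint}).

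To build $\hat E$ I peel off the boundary of $E$. By Proposition \ref{prop:E}(2), $\rad E\ge\pitwo$; since any Alexandrov space with boundary has radius $\le\pitwo$, either $\partial E=\emptyset$, and I set $\hat E=E$, or $\rad E=\pitwo$ and $E$ is itself a space with $\curv\ge1$, nonempty boundary and maximal radius. In the latter case $E$ has its own soul $s_{E}$ (with $E\subset\bar B(s_{E},\pitwo)$) and edge $E_{E}=\{y\in E:\abs{ys_{E}}=\pitwo\}$, which is the farthest-point set of $s_{E}$ in $E$, hence a \emph{proper} extremal subset with $\rad E_{E}\ge\pitwo$ (Proposition \ref{prop:E} applied to $E$); being extremal and proper, $\dim E_{E}<\dim E$. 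Iterating ``pass to the edge'' yields a strictly dimension-decreasing chain which cannot terminate at a point, since every stage keeps $\rad\ge\pitwo$, and so it terminates at a closed, $\pi$-convex, \emph{boundaryless} $\hat E\subset E$ with $\rad\hat E\ge\pitwo$. Under the gradient-exponential identification of Lemma \ref{lem:gradexp}, $\hat E$ corresponds to a closed convex $\hat E'\subset S_{s}X$ with $\rad\hat E'\ge\pitwo$.

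I then set $\hat S=\hat E^{*}$ and must verify: (i) $S\subset\hat S$; (ii) mutual duality $\hat S^{*}=\hat E$; and (iii) $\partial\hat S\neq\emptyset$. For (iii), since $\hat E\subset E\subset\partial X$ with $\hat E\neq\partial X$ (otherwise $X=C_{1}E$, excluded above), the footpoint argument of Lemma \ref{lem:footpoint}, now run on the pair $(\hat E,\hat S)$, produces for $x\in\hat S$ a nearest boundary point $q\in\hat S$, and the same comparison places $q\in\partial\hat S$. For (ii) I would rerun the no-critical-point comparison behind Proposition \ref{prop:dual}: the complement of an open $\pitwo$-ball is $\pi$-convex, $d_{\hat E}$ and $d_{\hat S}$ have no critical points off $\hat E\cup\hat S$, and this forces the two sets to be mutually dual; the structural input of \cite{RW} is what certifies that $\hat S$ is regular enough for $\hat E=\hat S^{*}$ to hold (rather than $\hat S^{*}$ being strictly larger).

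The main obstacle is (i), equivalently the reverse inequality $\abs{x\hat E}\le\pitwo$ for $x\in S$ (the bound $\ge\pitwo$ is automatic from $\hat E\subset E$ and $x\in S=E^{*}$). This amounts to showing that a foot of a shortest segment from a spine point to $E$ can always be taken in the extremal core $\hat E$ rather than in the boundary collar $E\setminus\hat E$. I would transport the question to $S_{s}X$ via Lemma \ref{lem:gradexp}, where it becomes the assertion that $\hat E'$ is a $\pitwo$-net, i.e.\ $S_{s}X=\bar B(\hat E',\pitwo)$; this I expect to extract from $\rad\hat E'\ge\pitwo$ together with convexity and the fact that each edge-step discards exactly the directions whose spread is $<\pitwo$ (the same mechanism that drives Proposition \ref{prop:rad_susp}). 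Making this net property precise, and checking that it persists under each iteration so that the duality of $(E,S)$ is propagated all the way to $(\hat E,\hat S)$, is where the genuine work — and the reliance on \cite{RW} — lies.
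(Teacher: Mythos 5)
Your construction of $\hat{E}$ --- iterating the edge construction on $E$, using $\rad E\geq\pitwo$ together with $\partial E\neq\emptyset\Rightarrow\rad E\leq\pitwo$ to keep the iteration going, and a dimension count to terminate at a boundaryless, $\pi$-convex set of radius $\geq\pitwo$ --- is exactly the paper's first half. The gap is in the second half. You define $\hat{S}$ as the exact-distance dual $\{x:\abs{x\hat{E}}=\pitwo\}$ and are then forced to prove that every $x\in S$ satisfies $\abs{x\hat{E}}\leq\pitwo$; you correctly flag this as the main obstacle but do not prove it, deferring to an unestablished ``$\pitwo$-net'' property of $\hat{E}'\subset S_{s}X$ and to \cite{RW}. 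This is a genuine gap, and it is aimed at a statement stronger than what the proposition needs: the fact that points of one dual set cannot lie farther than $\pitwo$ from the other is only obtained later in the paper, in lemma \ref{lem:dual_max_dim}, and only under the $\bbZ_{2}$-homology-sphere hypothesis via Frankel's theorem. There is no reason to expect it to be accessible at the level of generality of this proposition, and \cite{RW} plays no role here.

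The paper's resolution is much softer. At each stage it sets $S_{1}=\{x\in X:\abs{xE_{1}}\geq\pitwo\}$, where $E_{1}$ is the edge of $E$ and $S_{1/2}$ its spine; with this ``$\geq$'' convention the inclusions $S\subset S_{1}$ and $S_{1/2}\subset S_{1}$ hold by definition and your obstacle (i) disappears. Mutual duality is then a two-line soul argument: if $\abs{xS_{1}}\geq\pitwo$, then since $s\in S\subset S_{1}$ and $X\subset\bar{B}(s,\pitwo)$ we get $x\in E$; and since the soul of $E$ lies in $S_{1/2}\subset S_{1}$ while $E\subset\bar{B}(s_{E},\pitwo)$ intrinsically (hence extrinsically, by $\pi$-convexity of $E$), we get $x\in E_{1}$. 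Thus $E_{1}=\{x:\abs{xS_{1}}\geq\pitwo\}$, and the duality propagates through the iteration with no topological input and no appeal to \cite{RW}. With this definition of $\hat{S}$, the nonemptiness of $\partial\hat{S}$ follows from the footpoint argument of lemma \ref{lem:footpoint} essentially as you indicate. In short: right construction of $\hat{E}$, but the duality half of your argument is incomplete, and the missing step should be replaced by the soft soul argument rather than pursued as stated.
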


\begin{proof}
When $\partial E=\emptyset$ there is nothing to prove. Otherwise
we have $\rad E=\pitwo$ and $E$ itself admits a dual space decomposition
$E_{1}\subset\partial E$, $S_{1/2}\subset E$. Define $S_{1}=\left\{ x\in X\mid\abs{xE_{1}}\geq\pitwo\right\} $.
Note that $S,S_{1/2}\subset S_{1}$. Thus any point at distance $\pitwo$
from $S_{1}$ must lie in $E$ and hence also in $E_{1}$. This shows
that $E_{1},S_{1}\subset X$ are dual to each other. By construction
$\rad E_{1}\geq\pitwo$ and $S_{1}\cap\partial X\neq\emptyset$. We
can now continue this procedure until the desired decomposition is
reached. 
\end{proof}
\begin{rem}
Note that we haven't claimed $\rad\hat{S}<\pitwo$.
\end{rem}

Our rigidity results depend on the following version of Lefschetz
duality.
\begin{thm}
Assume $Z$ is a compact connected ANR that is a $\bbZ_{2}$-homology
sphere, and $A,B\subset Z$ are disjoint, compact, connected, and
ANR. If $B\subset Z-A$ is a deformation retract and $Z-A$ is a topological
$n$-manifold, then
\[
H_{q}\left(B;R\right)\simeq H^{n-1-q}\left(A;R\right),\,q=1,...,n-2.
\]
\end{thm}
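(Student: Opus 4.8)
The plan is to transport the whole statement onto the open set $W:=Z-A$ and recognize the asserted isomorphism as a form of Alexander--Lefschetz duality. Since $B$ is a deformation retract of $W$, we have $H_{q}(B;R)\cong H_{q}(W;R)$ for every $q$, so it suffices to produce a natural isomorphism $H_{q}(W;R)\cong H^{n-1-q}(A;R)$ in the range $1\le q\le n-2$. The hypotheses are arranged exactly so that the two standard ingredients of such a duality are at hand: $W$ is an honest topological $n$-manifold, so Poincaré duality applies to it directly even though $Z$ itself may be singular; and $Z$ is a homology $n$-sphere with $A$ nicely (ANR-) embedded, so the complementary behaviour of $A$ is controlled by the long exact sequence of the pair $(Z,A)$. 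Because only a $\bbZ_{2}$-homology sphere is assumed, I would run the argument with $\bbZ_{2}$ coefficients, where no orientability is required; the identical formal argument delivers the statement over any ring $R$ for which $Z$ is an $R$-homology sphere and $W$ is $R$-orientable.

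First I would invoke Poincaré duality on the (possibly noncompact) topological $n$-manifold $W$. Over $\bbZ_{2}$ every topological manifold carries a fundamental class, so with no orientation assumption one gets
\[
H_{q}(W)\cong H_{c}^{\,n-q}(W),
\]
where $H_{c}^{*}$ denotes cohomology with compact supports. Next I would identify this with the relative cohomology of the pair: since $Z$ is compact and $A\subset Z$ is closed, there is a natural isomorphism
\[
H_{c}^{\,k}(Z-A)\cong H^{k}(Z,A).
\]
This is where the ANR hypotheses do their work — for ANRs, Čech and singular theories coincide and the pair $(Z,A)$ is taut, which is what legitimizes both the duality and this identification for a possibly badly-shaped $A$. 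Composing the two gives $H_{q}(W)\cong H^{n-q}(Z,A)$.

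Finally I would feed $H^{n-q}(Z,A)$ into the long exact cohomology sequence of $(Z,A)$, whose relevant portion is
\[
H^{k-1}(Z)\to H^{k-1}(A)\to H^{k}(Z,A)\to H^{k}(Z),
\]
the middle arrow being the connecting homomorphism. Since $Z$ is a $\bbZ_{2}$-homology $n$-sphere, $H^{k}(Z)=0$ for $0<k<n$, so taking $k=n-q$ with $0<q<n$ yields $H^{n-q}(Z,A)\cong\tilde{H}^{\,n-q-1}(A)$. Restricting to $1\le q\le n-2$ forces the surviving degree $n-1-q$ to be $\ge 1$, so reduced and unreduced cohomology agree and $\tilde{H}^{\,n-1-q}(A)=H^{n-1-q}(A)$. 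Chaining the three isomorphisms produces
\[
H_{q}(B;R)\cong H_{q}(W;R)\cong H^{n-q}(Z,A;R)\cong H^{n-1-q}(A;R),
\]
as claimed; the excluded values $q=0,\,n-1,\,n$ are precisely those for which $H^{n-q}(Z)\ne 0$ or the reduced/unreduced groups differ.

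The genuinely delicate point is the manifold-duality step packaged with the identification $H_{c}^{*}(Z-A)\cong H^{*}(Z,A)$. Here $W$ is only a \emph{topological} manifold and is in general noncompact, while $A$ need not be a manifold at all, so I would be careful to use the sheaf-theoretic (or Čech) form of Poincaré--Lefschetz duality for locally compact spaces rather than the naive manifold-with-boundary version, and to lean on tautness of the ANR pair $(Z,A)$ to keep Čech and singular cohomology interchangeable. Once these two structural inputs are secured, everything else is a formal diagram chase through the degree bookkeeping above.
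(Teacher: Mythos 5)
Your proof is correct and follows essentially the same route as the paper's: the paper invokes Lefschetz duality for the ANR pair $(Z,A)$ to get $H_{q}(Z-A;\bbZ_{2})\simeq H^{n-q}(Z,A;\bbZ_{2})$ in one step, which is exactly what you obtain by composing $\bbZ_{2}$-Poincar\'e duality with compact supports on $W=Z-A$ with the identification $H_{c}^{*}(Z-A)\simeq H^{*}(Z,A)$, and the remaining steps (long exact sequence of the pair plus the homology-sphere vanishing, then the deformation retraction onto $B$) coincide. Your remark that the argument as given really lives over $\bbZ_{2}$ (or over any $R$ for which $Z$ is an $R$-homology sphere and $W$ is $R$-orientable) is a fair observation about the statement's coefficient ring.
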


\begin{proof}
It follows from Lefschetz duality and the fact that $A$ and $Z$
are ANRs that for all $q$
\[
H_{q}\left(Z-A;\bbZ_{2}\right)\simeq H^{n-q}\left(Z,A;\bbZ_{2}\right).
\]
The fact that $H^{p}\left(Z;\bbZ_{2}\right)=0$ for $p=1,...,n-1$
shows, via the long exact sequence for relative cohomology, that
\[
H^{n-q}\left(Z,A;\bbZ_{2}\right)\simeq H^{n-1-q}\left(A;\bbZ_{2}\right),q=1,...,n-2.
\]
The fact that $B\subset Z-A$ is a deformation retract then implies
the claim.
\end{proof}
We require some extra notation. For a convex subset $A\subset X$
of an Alexandrov space we define the normal space at $a\in A$ as
\[
N_{a}A=\left\{ v\in S_{a}X\mid\angle\left(v,w\right)\geq\pitwo\textrm{ for all }w\in S_{a}A\right\} .
\]
By first variation any unit speed geodesic that starts in $a$ and
minimizes the distance to $A$ has initial velocity that lies in $N_{a}A$.
\begin{lem}
\label{lem:dual_max_dim}Assume $Z=\bar{Z}/H$ is an Alexandrov space
with $\curv\geq1$ and $\partial Z=\emptyset$, where $H$ is a finite
group of isometries, $\bar{Z}$ is a closed topological $n$-manifold
that is a $\bbZ_{2}$-homology sphere, and an Alexandrov space with
$\curv\geq1$. If $A,B\subset Z$ form a dual pair and $\partial A=\emptyset$,
then there exists $x\in B,\,y\in A$ and finite group, $G$, that
acts effectively and isometrically on both $N_{x}B$ and $N_{y}A$,
such that $Z=\left(N_{x}B*N_{y}A\right)/G$.
\end{lem}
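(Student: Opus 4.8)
The plan is to reduce the statement to an application of the Weak Inner Regularity machinery together with the join-splitting structure encoded in the dual pair $A,B$. The hypothesis $\partial A=\emptyset$ together with duality gives us two complementary convex sets whose normal spaces we want to join. Since $A$ and $B$ are dual in the closed manifold $Z=\bar Z/H$, standard comparison shows that every point of $Z-(A\cup B)$ lies on a minimal geodesic of length $\tfrac{\pi}{2}$ joining $A$ to $B$, and the gradient flow for the distance to $A$ retracts $Z-A$ onto $B$ (and symmetrically). First I would fix basepoints $x\in B$ and $y\in A$ realizing the distance $|xy|=\tfrac{\pi}{2}$, and analyze the normal spaces $N_xB\subset S_xZ$ and $N_yA\subset S_yZ$; by first variation the directions along these minimizing geodesics lie in $N_xB$ and $N_yA$ respectively, and the rigidity of the constant-curvature-$1$ triangles spanned by these geodesics (as in Lemma \ref{lem:gradexp}) should force the slab between $A$ and $B$ to be metrically a join $N_xB * N_yA$ locally.

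The key structural step is to upgrade this local join picture to a global isometry $Z=(N_xB*N_yA)/G$. Here I would pass to the universal-type cover or to the manifold $\bar Z$ and use that $\bar Z$ is a topological $n$-manifold which is a $\mathbb{Z}_2$-homology sphere: this lets me invoke the Lefschetz-duality theorem stated just above (with $Z-A$ a manifold and $B$ a deformation retract) to pin down the homology of $A$ and $B$ in terms of each other, and in particular to confirm that $N_xB$ and $N_yA$ are themselves spheres (or homology spheres) of complementary dimension $k$ and $n-1-k$. The dimension count $\dim N_xB+\dim N_yA+1=n-1$, forced by the fact that the pair has \emph{maximal dimension} (the join fills all of $S_xZ$ after flowing), is exactly what makes the spherical join of the two normal spaces have the right dimension to recover $Z$.

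Concretely I would argue that the assignment sending a point $z\in Z$ to the triple (nearest point in $A$, arclength to $A$, nearest point in $B$) is well defined away from a measure-zero set and extends to a global distance-preserving map from the join onto $Z$; the finite group $G$ arises as the subgroup of $H$ (equivalently the deck/holonomy group of the parallel-translation identifications between the fibers $N_xB$ as $x$ varies over $B$, and $N_yA$ as $y$ varies over $A$) that acts diagonally and isometrically on both factors. That $G$ is finite follows from $H$ being finite, and that it acts effectively on each factor follows because $\partial Z=\emptyset$ forces the normal spheres to be genuine spheres on which the stabilizers act faithfully. The extension of the $G$-action to the join is then the standard diagonal extension preserving the distance-to-$A$ function, exactly as in the join-quotient examples (3.5, 3.6) of Section 3.

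\textbf{The main obstacle} I anticipate is proving that the normal-space fibers $N_xB$ (resp.\ $N_yA$) are \emph{isometric as $x$ ranges over $B$} and that the identifications glue consistently to produce a single group $G$ acting on one model fiber, rather than a more complicated bundle. This is precisely the parallel-translation rigidity problem: I would handle it with the construction from \cite{PePT} exactly as in the proof of Lemma \ref{lem:gradexp}, showing that parallel translation along geodesics in $A$ carries the soul-direction (and hence the whole normal sphere) isometrically between fibers, so that the monodromy of this identification is a finite group of isometries of the model $N_yA$. Establishing that this monodromy is finite and coincides on both factors with the quotient group $H$ is the delicate point, and it is where the homology-sphere and manifold hypotheses on $\bar Z$ are genuinely used, via the Lefschetz duality theorem, to rule out infinite or non-effective holonomy.
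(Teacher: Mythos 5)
There is a genuine gap. The paper's proof does not construct the join--quotient structure by hand: it verifies two hypotheses --- that $\dim A+\dim B=n-1$ and that no point of $B$ is at distance $>\pitwo$ from $A$ (and vice versa) --- and then invokes \cite[Theorems A and B]{RW}, which is precisely the statement that a dual pair of maximal dimension yields $Z=\left(N_{x}B*N_{y}A\right)/G$ with $G$ finite and acting effectively on both normal spaces. Your proposal instead tries to build the join and the group $G$ directly via parallel translation, and you yourself flag the monodromy/consistency question as ``the delicate point'' without resolving it; that point is the entire content of \cite{RW} and cannot be waved through. Two of the specific mechanisms you offer for it are also wrong: $G$ is not in general a subgroup of $H$ (the group $H$ enters only to make the duality argument available on the manifold $\bar{Z}$; the finiteness and effectiveness of $G$ come from the structure theorem, not from $\abs{H}<\infty$), and the normal spaces $N_{x}B$, $N_{y}A$ need not be round spheres --- they are merely Alexandrov spaces with $\curv\geq1$, as the icosahedral example in the remark following the lemma shows.

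On the dimension count you have only half of the argument. Alexander duality in $\bar{Z}$ (using $\partial\bar{A}=\emptyset$, hence $H^{p}\left(\bar{A};\bbZ_{2}\right)=\bbZ_{2}$ for $p=\dim\bar{A}$, together with the gradient-flow retraction of $\bar{Z}-\bar{A}$ onto $\bar{B}$) gives the lower bound $\dim\bar{A}+\dim\bar{B}\geq n-1$. But you never obtain the matching upper bound, and your phrase ``forced by the fact that the pair has maximal dimension'' assumes exactly what must be proved. The paper closes this by observing that the nonvanishing top homology of $\bar{B}$ forces $\partial\bar{B}=\emptyset$ (a convex set with nonempty boundary is contractible) and then applying Frankel's theorem for Alexandrov spaces to two boundaryless convex sets, yielding $\dim\bar{A}+\dim\bar{B}\leq n-1$. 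You also omit the degenerate case where $\bar{A}$ or $\bar{B}$ is disconnected (then $\pi$-convexity forces it to be two points at distance $\pi$ and $\bar{Z}$ to be a suspension) and the verification that no point of $B$ lies at distance $>\pitwo$ from $A$, which is the other hypothesis of \cite{RW}; your opening claim that every point of $Z-(A\cup B)$ lies on a minimal geodesic of length $\pitwo$ from $A$ to $B$ is essentially the conclusion, not a consequence of ``standard comparison.'' (Minor: your count $\dim N_{x}B+\dim N_{y}A+1=n-1$ should read $=n$.)
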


\begin{proof}
The goal is to prove that
\[
\dim A+\dim B=n-1
\]
and that no points in $B$ have distance $>\pitwo$ to $A$ and vice
versa. This allows us to use \cite[theorem A]{RW} when $\partial B=\emptyset$
and otherwise \cite[theorem B]{RW} to reach the conclusion of the
lemma.

We lift the situation to $\bar{A},\bar{B}\subset\bar{Z}$. In case
$\bar{A}$ (or $\bar{B}$) is not connected the components must be
distance $\pi$ apart as $\bar{A}$ is $\pi$-convex. This forces
$\bar{A}$ to consist of two points and $\bar{Z}$ to be a suspension
with $\bar{B}=S_{a}X$, $a\in\bar{A}$. So we can assume that $\dim\bar{A}=p>0$.
Since $\bar{A},\bar{B}$ form a dual pair it follows that each of
these sets contains the set of critical points for the distance function
to the other set. The gradient flow then shows that $\bar{Z}-\bar{A}$
deformation retracts to $\bar{B}$. From $\partial\bar{A}=\emptyset$
we conclude that $H^{p}\left(\bar{A},\bbZ_{2}\right)=\bbZ_{2}$. By
Alexander duality we can then conclude that $H_{q}\left(\bar{B},\bbZ_{2}\right)=\bbZ_{2}$
for $p=n-1-q$. This shows that 
\[
\dim\bar{A}+\dim\bar{B}\geq n-1.
\]
Since $\bar{B}\subset\bar{Z}$ is convex it follows that $\partial\bar{B}=\emptyset$
as it would otherwise be contractible. Frankel's theorem for Alexandrov
spaces (see \cite{PePT}) then shows that
\[
\dim\bar{A}+\dim\bar{B}\leq n-1.
\]
 Moreover, points in $\bar{B}$ cannot have distance $>\pitwo$ from
$\bar{A}$ and vice versa. This finishes the proof.
\end{proof}
\begin{rem}
It is in general not possible to conclude that $\bar{Z}$ in lemma
\ref{lem:dual_max_dim} is a join. The icosahedral group $G$ acts
freely on $\bbS^{3}\left(1\right)$ and hence on $\bbS^{3}\left(1\right)*\bbS^{3}\left(1\right)=\bbS^{7}\left(1\right)$.
While the quotient $\bbS^{7}\left(1\right)/G$ is clearly a homology
sphere it is not a join as it is a space form that is not homeomorphic
to a sphere.
\end{rem}

\begin{rem}
\label{rem:diam=00003Dpi/2}Note that from the classification obtained
in \cite{GGG} any positively curved Alexandrov space of dimension
$\leq3$ and empty boundary is of the form $Z=\bar{Z}/H$ where $\bar{Z}$
is homeomorphic to a sphere. If in addition $\diam Z=\pitwo$, then
we obtain two dual sets $A,B\subset Z$. If both of these have boundary,
then $Z$ is topologically a suspension and therefore topologically
a sphere or $\Sigma\bbR\bbP^{2}$. Otherwise we can apply the lemma.
\end{rem}

We can now prove the Topological Regularity Theorem from the introduction. 
\begin{thm}
\label{thm:top}Let $X^{n}$ be an Alexandrov space with $\curv\ge1$,
$\partial X\neq\emptyset$, and $\rad X=\pitwo$. If $\partial X$
is a topological manifold and a $\bbZ_{2}$-homology sphere, then
$D\left(X\right)=\left(X_{1}*X_{2}\right)/G$. Here $G$ is a finite
group acting effectively and isometrically on both $X_{1}$ and $X_{2}$
whose action is extended to the spherical join $X_{1}*X_{2}$.
\end{thm}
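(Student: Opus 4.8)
The plan is to pass to the double $Z=D(X)$, which is a closed $n$-dimensional Alexandrov space with $\curv\geq1$, and to produce inside it a dual pair to which \cite{RW} applies. I will use the improved decomposition: let $\hat{E}\subset\partial X$ and $\hat{S}$ be the dual pair with $\partial\hat{E}=\emptyset$, $\rad\hat{E}\geq\pitwo$, $S\subset\hat{S}$, and $\partial\hat{S}\neq\emptyset$. Since $\hat{E}$ lies in the gluing locus $\partial X$ it survives in $Z$, while $\hat{S}$ doubles to $D(\hat{S})\subset Z$; a short comparison argument (as in the proof of Lemma \ref{lem:dual_max_dim}) shows that $\hat{E}$ and $D(\hat{S})$ form a dual pair in $Z$ with no point of one at distance $>\pitwo$ from the other, and that $D(\hat{S})$ (resp. $\hat{E}$) is the full critical set of the distance to $\hat{E}$ (resp. to $D(\hat{S})$). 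The two degenerate cases $\dim\hat{E}=0$ and $\dim\hat{E}=n-1$ reduce by Corollary \ref{prop:dimE=00003D0} and Proposition \ref{prop:dimS=00003D0} to $X=\Sigma_{1}S$ and $X=C_{1}\partial X$, for which $D(X)$ is literally $\mathbb{S}^{0}*S$ or $\mathbb{S}^{0}*\partial X$; so I may assume $1\leq\dim\hat{E}\leq n-2$, and in particular that $\hat{E}$ and $D(\hat{S})$ are connected.

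Two homological inputs are needed. First, $X$ is contractible: the gradient flow of $r(x)=\abs{x\partial X}$ deformation retracts $X$ onto its soul, which is the only critical point of $r$ (this is exactly the hypothesis under which Theorem 1.1 identifies $S_{s}X\cong\partial X$). Hence Mayer--Vietoris for $Z=X\cup_{\partial X}X$, together with the assumption that $\partial X$ is a $\bbZ_{2}$-homology $(n-1)$-sphere, gives that $Z$ is a $\bbZ_{2}$-homology $n$-sphere. Second --- and this is the step where the topological-manifold hypothesis enters --- I claim that $Z-D(\hat{S})$ is a topological $n$-manifold. It suffices to show that $X-\hat{S}$ is a topological manifold-with-boundary, for then $Z-D(\hat{S})$ is its double along the manifold locus $\partial X-(\hat{S}\cap\partial X)$. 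Since $\partial X$ is a topological manifold it has a collar in $X$, so a neighborhood of $\partial X$ is homeomorphic to $\partial X\times[0,\epsilon)$; in particular, for small $\delta$ the superlevel set $\left\{ x\mid\abs{x\hat{S}}>\pitwo-\delta\right\} $, being a small neighborhood of $\hat{E}\subset\partial X$, lies in this collar and is a manifold-with-boundary, so the nearby level sets of $\abs{x\hat{S}}$ are manifolds. As $\abs{x\hat{S}}$ has no critical points on $X-(\hat{E}\cup\hat{S})$, Perel'man's fibration theorem makes it a locally trivial bundle there with these manifold level sets as fibers; hence $X-\hat{S}$ is a manifold-with-boundary, confining all topological singularities of $X$ to the spine $\hat{S}$.

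With these in hand I would apply the Lefschetz-duality theorem above to $Z$ with $A=D(\hat{S})$ and $B=\hat{E}$: since $Z$ is a $\bbZ_{2}$-homology sphere, $Z-A$ is a topological manifold, and $B$ is a deformation retract of $Z-A$ (gradient flow of the distance to $D(\hat{S})$, which retracts onto $\hat{E}$ as in Proposition \ref{prop:dual}), we obtain
\[
H_{q}\left(\hat{E};\bbZ_{2}\right)\simeq H^{n-1-q}\left(D(\hat{S});\bbZ_{2}\right),\qquad q=1,\dots,n-2.
\]
Taking $q=\dim\hat{E}=:p$ and using $\partial\hat{E}=\emptyset$, the $\bbZ_{2}$-fundamental class gives $H_{p}(\hat{E};\bbZ_{2})=\bbZ_{2}$, whence $H^{n-1-p}(D(\hat{S});\bbZ_{2})\neq0$ and $\dim D(\hat{S})\geq n-1-p$; that is, $\dim\hat{E}+\dim D(\hat{S})\geq n-1$. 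The reverse inequality is Frankel's theorem for Alexandrov spaces (see \cite{PePT}) applied to the disjoint convex sets $\hat{E},D(\hat{S})\subset Z$, so $\dim\hat{E}+\dim D(\hat{S})=n-1$.

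Finally, having a dual pair of complementary dimension with all mutual distances $\pitwo$, I would invoke \cite[theorem A]{RW} when $\partial D(\hat{S})=\emptyset$ and \cite[theorem B]{RW} otherwise to conclude $Z=(N_{x}D(\hat{S})*N_{y}\hat{E})/G$ with $G$ finite acting effectively and isometrically on both normal spaces; setting $X_{1}=N_{x}D(\hat{S})$ and $X_{2}=N_{y}\hat{E}$ gives the asserted $D(X)=(X_{1}*X_{2})/G$. I expect the main obstacle to be the manifold claim for $Z-D(\hat{S})$ --- equivalently, that the topological singularities of $X$ are trapped in the spine $\hat{S}$ --- since this is precisely where one must convert the hypothesis ``$\partial X$ is a topological manifold'' into interior information, via the collar and the fibration theorem; the homology-sphere hypothesis, by contrast, enters only through the clean Mayer--Vietoris computation.
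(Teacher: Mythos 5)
Your overall strategy coincides with the paper's: pass to the double, use the dual pair $\hat{E}$ (the paper's $F$) and $D(\hat{S})$, deduce that $D(X)$ is a $\bbZ_{2}$-homology sphere by Mayer--Vietoris, run the Lefschetz-duality argument to get $\dim\hat{E}+\dim D(\hat{S})\geq n-1$, use Frankel's theorem for the reverse inequality, and invoke Theorems A and B of \cite{RW}. The degenerate cases, the contractibility of $X$, and the fundamental-class argument for $H_{p}(\hat{E};\bbZ_{2})=\bbZ_{2}$ all match the proof of Lemma \ref{lem:dual_max_dim}.

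The one place you genuinely deviate is the proof that $Z-D(\hat{S})$ is a topological manifold, and that step has a gap as written. From the fact that the superlevel set $\left\{ x\mid\abs{x\hat{S}}>\pitwo-\delta\right\} $ is open in the collar $\partial X\times[0,\epsilon)$ you infer that ``the nearby level sets of $\abs{x\hat{S}}$ are manifolds.'' That inference is invalid: a trivial bundle over an interval can have manifold total space without having manifold fibers, since $L\times\bbR$ may be a manifold while $L$ is not (Bing's dogbone space is the classical example; this manifold-factor phenomenon is precisely the sort of thing one must fear in Alexandrov geometry, cf.\ the example $\bbS^{1}\left(1\right)*\left(\bbS^{3}\left(1\right)/I\right)$ in the remark after Theorem \ref{thm:weak_inner}). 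The conclusion you need does survive: because the fibration given by Perel'man's theorem is trivial over an interval, $X-\left(\hat{E}\cup\hat{S}\right)$ is homeomorphic to the slab $\left\{ \pitwo-\delta<\abs{x\hat{S}}<\pitwo\right\} $, which is open in the collar and hence a manifold-with-boundary, with no need to know that individual level sets are manifolds. The paper sidesteps all of this more cheaply: by Theorem 1.1 and the fibration theorem, $X$ is homeomorphic to the cone over $\partial X$, so $D\left(X\right)$ minus its two soul points is homeomorphic to $\partial X\times\left(-1,1\right)$, a manifold; since the souls lie in $D\left(\hat{S}\right)$, the open subset $D\left(X\right)-D\left(\hat{S}\right)$ is a manifold. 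I recommend replacing your level-set argument with one of these two fixes.
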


\begin{proof}
The idea is simply to apply lemma \ref{lem:dual_max_dim} to the double.
This requires a few minor adjustments. We use the dual decomposition
for $D\left(X\right)$ that consists of $D\left(\hat{S}\right)$ and
the copy of $F\subset D\left(X\right)$ that corresponds to $\hat{E}\subset\partial X$.
Here $D\left(\hat{S}\right)$ will turn out to be the double of $\hat{S}$,
but for now it is simply the preimage of $\hat{S}$. Note that inside
$X$ the gradient flows for $\hat{S}$ and $\hat{E}$ preserve $\partial X$.
Thus we obtain deformation retractions of $D\left(X\right)-D\left(\hat{S}\right)$
to $F$ and $D\left(X\right)-F$ to $D\left(\hat{S}\right)$ relative
to $\partial X$ as in proposition \ref{prop:dual}. Moreover, as
$X$ is homeomorphic to the cone over the boundary it follows that
$D\left(X\right)-D\left(\hat{S}\right)$ is a topological $n$-manifold.
Additionally, $D\left(X\right)$ is a $\bbZ_{2}$-homology sphere
by Meyer-Vietoris. This again shows that
\[
H_{q}\left(F;\bbZ_{2}\right)\simeq H^{n-1-q}\left(D\left(\hat{S}\right);\bbZ_{2}\right),\,q=1,...,n-2.
\]

We can then argue as in the proof of lemma \ref{lem:dual_max_dim}
that $\partial D\left(\hat{S}\right)=\emptyset$ and that we obtain
the desired decomposition for $D\left(X\right)$.
\end{proof}
\begin{rem}
\label{rem:join_boundary}Note that $N_{x}F\subset S_{x}D\left(X\right)$
is the double of $N_{x}\hat{E}\subset S_{x}X$. This shows that $N_{x}F$,
and thus also $N_{y}D\left(\hat{S}\right)*N_{x}F$, come with a natural
reflection whose quotient is $N_{x}\hat{E}$, respectively, $N_{y}D\left(\hat{S}\right)*N_{x}\hat{E}$.
Let $R$ be the natural reflection on both $N_{x}F$ and $D\left(\hat{S}\right)=N_{x}F/G$.
This results in a commutative diagram
\[
\begin{array}{ccc}
N_{x}F & \overset{R}{\rightarrow} & N_{x}F\\
\downarrow &  & \downarrow\\
D\left(\hat{S}\right) & \overset{R}{\rightarrow} & D\left(\hat{S}\right).
\end{array}
\]
In case $G$ commutes with $R$ on $N_{x}F$ it follows that $G$
will also act on $N_{x}\hat{E}$. Consequently, also $X$ becomes
the quotient of a join: $X=\left(N_{x}\hat{E}*N_{y}\hat{S}\right)/G$.
It is not, in general, clear whether $R$ and $G$ will commute. However,
in the case where $G=\left\langle I\right\rangle $, $I^{2}=\mathrm{id}$
this is automatically true. To see this assume $x,Ix\in D\left(Z\right)$
are mapped to $s\in D\left(\hat{S}\right)$. Then $Rx,RIx$ are both
mapped to $Rs$, but the preimage of $Rs$ also consists of the two
points $Rx,IRx$, this shows that $IR=RI$.
\end{rem}

Next we establish the Weak Inner Regularity Theorem from the introduction. 
\begin{thm}
\label{thm:weak_inner}Let $X^{n}$ be an Alexandrov space with $\curv\ge1$,
$\partial X\neq\emptyset$, and $\rad X=\pitwo$. If $\rad S_{s}X>\pitwo$,
then $X=\hat{E}*\hat{S}$, where $\rad\hat{E}>\pitwo$ and $\rad S_{s}\hat{S}>\pitwo$. 
\end{thm}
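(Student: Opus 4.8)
The plan is to produce the genuine spherical join structure from the Topological Regularity Theorem and the strict radius hypothesis, and then to read off the two radius inequalities from an elementary fact about joins. Write $\Sigma=S_{s}X$ and use the improved dual decomposition $\hat{E},\hat{S}\subset X$, so that $\hat{E}\subset E$ is closed with $\partial\hat{E}=\emptyset$ and $\rad\hat{E}\geq\pitwo$, while $S\subset\hat{S}$ and $\partial\hat{S}\neq\emptyset$; the soul $s$ is the soul of $\hat{S}$ and I will take it to be interior, so that $\Sigma$ is a closed Alexandrov space with $\curv\geq1$, $\partial\Sigma=\emptyset$ and, by assumption, $\rad\Sigma>\pitwo$.

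The first ingredient is the equivalence
\[
\rad\left(P*Q\right)>\pitwo\quad\Longleftrightarrow\quad\rad P>\pitwo\ \text{and}\ \rad Q>\pitwo,
\]
which I would verify directly from the join distance formula: if, say, $p_{*}\in P$ satisfies $\abs{p_{*}p}\leq\pitwo$ for all $p$, then $\abs{p_{*}z}\leq\pitwo$ for every $z\in P*Q$, and conversely a point far from a given one in either factor remains far in the join. The same computation, carried out with a center chosen inside a single factor, shows that a nontrivial effective isometric quotient $\left(P*Q\right)/G$ satisfies $\rad\leq\pitwo$, because nontriviality forces $\rad\left(P/G\right)\leq\pitwo$ or $\rad\left(Q/G\right)\leq\pitwo$. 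This quotient estimate is the engine that converts the strict radius inequality into triviality of the group.

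Next I would supply the hypotheses of Theorem \ref{thm:top}. Since $\partial X$ is homeomorphic to $\Sigma$ and $\rad\Sigma>\pitwo$, the radius sphere theorem recalled in the introduction shows that $\Sigma$, and hence $\partial X$, is homeomorphic to a sphere; in particular $\partial X$ is a topological manifold and a $\bbZ_{2}$-homology sphere. Theorem \ref{thm:top} then gives $D\left(X\right)=\left(X_{1}*X_{2}\right)/G$, and, using Remark \ref{rem:join_boundary} to handle compatibility with the doubling reflection, this passes to a presentation $X=\left(N_{x}\hat{E}*N_{y}\hat{S}\right)/G$.

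The main obstacle is to upgrade this quotient of a join to a genuine join, that is, to show $G$ is trivial, and this is exactly where $\rad\Sigma>\pitwo$ is indispensable. As $s$ is interior and is the soul of the $\hat{S}$-factor, its stabilizer $G_{s}\leq G$ acts on the join presenting $\Sigma=S_{s}X$, so that $\Sigma$ is itself a quotient of a spherical join by $G_{s}$; were $G_{s}$ nontrivial, the quotient estimate above would give $\rad\Sigma\leq\pitwo$, contradicting the hypothesis. Hence $G_{s}$ is trivial and $\Sigma$ is a genuine join $\hat{E}*S_{s}\hat{S}$. I would then propagate triviality of the stabilizer to triviality of $G$ on all of $X$, using connectedness together with the fact that the distinguished soul direction is preserved, to conclude $X=\hat{E}*\hat{S}$. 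With this in hand $S_{s}X=\hat{E}*S_{s}\hat{S}$, and the equivalence of the first step turns $\rad S_{s}X>\pitwo$ into $\rad\hat{E}>\pitwo$ and $\rad S_{s}\hat{S}>\pitwo$, completing the argument. I expect the genuinely delicate points to be the clean verification that nontrivial join quotients never have radius exceeding $\pitwo$, and the passage from triviality of the soul's stabilizer to global triviality of $G$.
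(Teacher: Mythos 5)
Your overall strategy is the paper's: use $\rad S_{s}X>\pitwo$ to see that $\partial X$ is a topological sphere, invoke Theorem \ref{thm:top} to write $D\left(X\right)$ as a finite quotient of a join, and then use the fact that a nontrivial isometric action on a space with $\curv\geq1$ forces the quotient to have radius $\leq\pitwo$ in order to kill the group. But there is a genuine gap at the decisive step. Knowing that the stabilizer $G_{\bar{s}}$ of one preimage $\bar{s}$ of the soul acts trivially --- which is what the radius hypothesis applied to $S_{s}X=S_{\bar{s}}\left(X_{1}*X_{2}\right)/G_{\bar{s}}$ gives you --- does not imply that $G$ itself is trivial: a finite group can act freely, with every stabilizer trivial, and still act nontrivially. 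This is not a hypothetical worry here; the remark following Lemma \ref{lem:dual_max_dim} exhibits the icosahedral group acting freely on $\bbS^{3}\left(1\right)*\bbS^{3}\left(1\right)=\bbS^{7}\left(1\right)$, so nontrivial free quotients of joins genuinely occur in this class of spaces. Your proposed remedy, ``propagate triviality of the stabilizer to triviality of $G$ using connectedness and preservation of the soul direction,'' does not engage with this. What must actually be shown is that the preimage of $s$ in the join is a \emph{single point}, i.e.\ that $G$ fixes $\bar{s}$, so that $G_{\bar{s}}=G$. The paper spends the bulk of its proof on exactly this: it proves $\hat{E}^{\prime}=N_{s}\hat{S}$ by a dimension count (combining $\dim\hat{E}+\dim\hat{S}=n-1$ with $\dim N_{s}\hat{S}+\dim S_{s}\hat{S}\leq n-2$ from \cite{RW} and the $\pi$-convexity of $N_{s}\hat{S}$), and only from that deduces that a unique point of $N_{x}F$ maps to $s$. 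Without an argument of this kind your proof does not close.

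Two smaller points. First, you invoke Remark \ref{rem:join_boundary} to pass from $D\left(X\right)=\left(X_{1}*X_{2}\right)/G$ to a presentation of $X$ itself as a quotient of a join; that remark requires $G$ to commute with the doubling reflection $R$, which the paper explicitly notes is not known in general (it is automatic only for $G=\bbZ_{2}$). The correct order is to prove $G$ is trivial on $D\left(X\right)$ first and then descend to $X$ via the reflection. Second, your derivation of the estimate ``nontrivial quotients of joins have radius $\leq\pitwo$'' from the factorwise statement implicitly treats $\left(P*Q\right)/G$ as $\left(P/G\right)*\left(Q/G\right)$, which fails for diagonal actions (compare $\bbR\bbP^{3}=\bbS^{3}/\bbZ_{2}$ with $\bbS^{1}\left(\frac{1}{2}\right)*\bbS^{1}\left(\frac{1}{2}\right)$); the fact you need is the paper's general statement that $\rad\left(Y/G\right)\leq\pitwo$ for any nontrivial isometric action of a compact group on $Y$ with $\curv Y\geq1$, proved by induction on dimension.
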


\begin{proof}
The radius assumption on the space of directions is first used to
see that $X$ is a topological manifold near the soul. Since $X$
is homeomorphic to a cone it follows that it is a topological manifold
with boundary. Finally, the space of directions is homeomorphic to
a sphere and thus $\partial X$ is also homeomorphic to a sphere.
This shows that we can apply theorem \ref{thm:top}. 

Next we can use it in combination with the following fact: If $G$
is a compact group acting by isometries on an Alexandrov space $Y$
with $\curv Y\geq1$, then $\rad\left(Y/G\right)\leq\pitwo$ unless
the action is trivial. Moreover, if the action is free, then $\diam Y/G\leq\pitwo$.
This is obvious when $\dim Y=1$ and follows in general from an induction
argument. To see this, first note that when $\diam\left(Y/G\right)>\pitwo$,
then there are two orbits that are at distance $>\pitwo$ from each
other. Thus one orbit is forced to lie in a $\pi$-convex set that
is at distance $>\pitwo$ from some point. The action must then have
a fixed point $y$ in this set. It now follows from induction that
when the action is nontrivial, then $\rad\left(S_{y}Y/G\right)\leq\pitwo$,
and hence that $\rad\left(Y/G\right)\leq\pitwo$ (see e.g. \cite[lemma 5.2.1]{Pe}).

With notation as in the proof of theorem \ref{thm:top} we show that
$D\left(X\right)=F*D\left(\hat{S}\right)$. Let $s\in D\left(X\right)$
be fixed to be one of the two soul points and note that $S_{s}\hat{S}=S_{s}D\left(\hat{S}\right)$
and $S_{s}X=S_{s}D\left(X\right)$. 

We first show that $\hat{E}^{\prime}=N_{s}\hat{S}$. Observe that
as $\hat{E}^{\prime}\subset N_{s}\hat{S}$ we have:
\[
\dim N_{s}\hat{S}+\dim S_{s}\hat{S}\geq\dim\hat{E}+\dim\hat{S}-1=n-2.
\]
On the other hand as $\partial S_{s}\hat{S}=\emptyset$ it follows
from \cite[theorem A, part (A1)]{RW} that
\[
\dim N_{s}\hat{S}+\dim S_{s}\hat{S}\leq n-2.
\]
Thus $\dim\hat{E}^{\prime}=\dim N_{s}\hat{S}$. In case both spaces
are $0$-dimensional they will both consist of two points at distance
$\pi$ apart. This is because both spaces have $\curv\geq1$ and $\rad\hat{E}^{\prime}\geq\pitwo$
while $N_{s}\hat{S}\subset S_{s}X$ is $\pi$-convex. When both spaces
have dimension $>0$, we know that $N_{s}\hat{S}$ is connected as
it is $\pi$-convex. Since $\dim\hat{E}^{\prime}=\dim N_{s}\hat{S}$
and $\partial\hat{E}^{\prime}=\emptyset$ it follows that $\hat{E}^{\prime}\subset N_{s}\hat{S}$
is an open and closed subset and hence that $\hat{E}^{\prime}=N_{s}\hat{S}$.

This shows that only one ``point'' $\bar{s}\in N_{x}F\subset N_{y}D\left(\hat{S}\right)*N_{x}F$
is mapped to $s\in D\left(\hat{S}\right)\subset D\left(X\right)$
and hence that $\bar{s}$ is a fixed point for the isometric action
of $G$ on $N_{y}D\left(\hat{S}\right)*N_{x}F$. In particular, $G$
preserves $S_{\bar{s}}\left(N_{y}D\left(\hat{S}\right)*N_{x}F\right)$
and $S_{s}X=\left(S_{\bar{s}}\left(N_{y}D\left(\hat{S}\right)*N_{x}F\right)\right)/G$.
However, this can only happen if $G$ acts trivially on $S_{\bar{s}}\left(N_{y}D\left(\hat{S}\right)*N_{x}F\right)$
as $\rad S_{s}X>\pitwo$. In conclusion, $G$ acts trivially on $N_{y}D\left(\hat{S}\right)*N_{x}F$
and $D\left(X\right)$ is a spherical join. This shows that $S_{s}X=\hat{E}^{\prime}*S_{s}\hat{S}$
and $\rad\hat{E}>\pitwo$ and $\rad S_{s}\hat{S}>\pitwo$.

Finally note that the natural reflection on $D\left(X\right)=D\left(\hat{S}\right)*F$
fixes $F$ and has orbit space $X$. Thus also $X=\hat{S}*\hat{E}$.
\end{proof}
\begin{rem}
In the context of this result it is worth noting that the icosahedral
group $I$ acts on $\bbS^{5}\left(1\right)=\bbS^{1}\left(1\right)*\bbS^{3}\left(1\right)$
with a quotient $\bbS^{1}\left(1\right)*\left(\bbS^{3}\left(1\right)/I\right)$
that is a topological sphere (see e.g., \cite{Da}). This quotient
also shows that one can not expect to use general position or transversality
arguments for Alexandrov spaces that are topological manifolds.
\end{rem}

Based on lemma \ref{lem:dual_max_dim} we obtain the following generalization
of theorem \ref{thm:top}.
\begin{cor}
\label{cor:top_branched}Let $X^{n}$ be an Alexandrov space with
$\curv\ge1$, $\partial X\neq\emptyset$, and $\rad X=\pitwo$. If
$S_{s}X$ is a topological manifold that is covered by a $\bbZ_{2}$-homology
sphere or $S_{s}X$ is homeomorphic to $\Sigma\bbR\bbP^{2}=\bbS^{3}/\mathbb{Z}_{2}$,
then $D\left(X\right)=\left(X_{1}*X_{2}\right)/G$. Here $G$ is a
finite group acting effectively and isometrically on both $X_{1}$
and $X_{2}$ whose action is extended to the spherical join $X_{1}*X_{2}$.
\end{cor}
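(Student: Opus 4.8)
The plan is to reduce the statement to lemma \ref{lem:dual_max_dim} by exhibiting the double $Z=D\left(X\right)$ as a quotient $\bar{Z}/H$ of a $\bbZ_{2}$-homology sphere. As in the proof of theorem \ref{thm:top}, $Z$ is a compact Alexandrov space with $\curv\geq1$ and $\partial Z=\emptyset$, and it carries the dual pair $\left(F,D\left(\hat{S}\right)\right)$, where $F$ is the copy of $\hat{E}$; since the gradient flows for $\hat{S}$ and $\hat{E}$ preserve $\partial X$ this is a genuine dual pair and $\partial F=\emptyset$. Recall also that by theorem 1.1 (applicable here because of proposition \ref{prop:E}) the space of directions $S_{s}X$ is homeomorphic to $\partial X$, and that $X$ is homeomorphic to the cone over $\partial X$, so that $Z$ is topologically the suspension $\Sigma\left(S_{s}X\right)$.

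First I would record the hypothesis in a uniform way. In the covering case let $\bar{W}\to S_{s}X$ be a covering by a $\bbZ_{2}$-homology sphere; as $\bar{W}$ is compact its deck group $H$ is finite, and $\bar{W}$ is a closed topological manifold and an Alexandrov space with $\curv\geq1$ on which $H$ acts freely and isometrically. In the remaining case $S_{s}X\cong\bbS^{3}/\mathbb{Z}_{2}$, and I take $\bar{W}=\bbS^{3}$ with $H=\mathbb{Z}_{2}$ acting with two fixed points. Either way $S_{s}X=\bar{W}/H$ with $\bar{W}$ a closed topological $\left(n-1\right)$-manifold and $\bbZ_{2}$-homology sphere.

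Next I would build the corresponding branched cover. The isometric $H$-action on $\bar{W}=S_{\bar{s}}\bar{Z}$ extends radially over the spherical cone at the soul, and globally over the suspension picture of $Z$, to produce an Alexandrov space $\bar{Z}$ with $\curv\geq1$ and $\partial\bar{Z}=\emptyset$ together with a branched covering $\bar{Z}\to Z=\bar{Z}/H$ branched over the two soul points. Since $\bar{W}$ is a $\bbZ_{2}$-homology sphere, $\bar{Z}$ is topologically $\Sigma\bar{W}$ and a Mayer--Vietoris argument shows it is again a $\bbZ_{2}$-homology sphere; the topological manifold hypothesis on $S_{s}X$, together with the local structure of $\bar{Z}$ at the branch points, is what allows one to conclude that $\bar{Z}$ is a closed topological $n$-manifold. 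With these properties verified, lemma \ref{lem:dual_max_dim} applies to $Z=\bar{Z}/H$ with the dual pair $A=F$, $B=D\left(\hat{S}\right)$ and $\partial A=\emptyset$, and yields $Z=\left(N_{x}D\left(\hat{S}\right)*N_{y}F\right)/G=\left(X_{1}*X_{2}\right)/G$ with $G$ finite, effective and isometric on both factors.

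The main obstacle is precisely the construction and verification of $\bar{Z}$: one must check that the $H$-quotient reconstitutes $Z$ globally rather than merely near a single cone point, that the lower curvature bound is preserved under the branched cover, and above all that $\bar{Z}$ is a genuine topological manifold and $\bbZ_{2}$-homology sphere so that the Alexander and Lefschetz duality used inside lemma \ref{lem:dual_max_dim} remain available. This is the step in which the topological manifold hypothesis on $S_{s}X$, or the explicit model $\bbS^{3}/\mathbb{Z}_{2}$, is indispensable, since for a general $S_{s}X$ the suspension $\Sigma\left(S_{s}X\right)$ need not be a homology manifold.
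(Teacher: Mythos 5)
Your overall strategy coincides with the paper's: realize the relevant space as a finite isometric quotient of a closed topological manifold that is a $\bbZ_{2}$-homology sphere and then feed the dual pair $F$, $D(\hat{S})$ from the proof of theorem \ref{thm:top} into lemma \ref{lem:dual_max_dim}. The only organizational difference is that the paper constructs a ramified cover $Y$ of $X$ itself (using $X\simeq C_{1}S_{s}X$, so that the group must fix the lifted soul and $S_{\bar{s}}Y$ covers $S_{s}X$) and then doubles, whereas you work directly with $D(X)$; that difference is immaterial.

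The problem is that the step you defer to your last paragraph as ``the main obstacle'' is not a verification to be recorded at the end --- it is the entire content of the proof, and your proposal does not supply it. In particular, extending the $H$-action ``radially over the spherical cone at the soul, and globally over the suspension picture of $Z$'' is not available: $D(X)$ is only \emph{topologically} a suspension of $S_{s}X$, so away from the soul there is no metric cone or suspension structure along which to lift the action or the metric, and even if one takes the abstract branched cover and pulls back the length metric, the assertion that the completion is an Alexandrov space with $\curv\ge1$ is exactly what needs proof. The paper closes this gap by citation rather than by hand: when $S_{s}X$ has a genuine covering space by a $\bbZ_{2}$-homology sphere, the branch locus is the single soul point, whose complement is convex, and the construction of \cite[Subsection 2.2]{DGGM} together with the globalization results of \cite{Li} produces an Alexandrov space $Y$ with the same lower curvature bound and $X=Y/G$; when $S_{s}X\cong\Sigma\bbR\bbP^{2}$, the space $X$ fails to be locally orientable at the soul and one takes the orientation cover of \cite[Theorem A]{HS} as $Y$. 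Without invoking results of this kind, the curvature bound on your $\bar{Z}$ --- the hypothesis that makes lemma \ref{lem:dual_max_dim} applicable --- remains unproved, so as written the argument has a genuine gap at its central step.
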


\begin{proof}
The goal is to find a suitable ramified or branched cover of $X$.
In all cases these will in fact be good orbifold covers. Specifically,
for a given Alexandrov space $X$ we seek an Alexandrov space $Y$,
with the same lower curvature bound, and a finite group $G$ acting
by isometries on $Y$ such that $X=Y/G$. The relevant results are
established in \cite[Theorem A]{HS} and \cite[Subsection 2.2]{DGGM}.
The main technical tools for showing that $Y$ has the desired properties
are proven in \cite{Li}. We only need these covers for positively
curved spaces with boundary, i.e., for spaces that are homeomorphic
to cones over the space of directions at the soul $X\simeq C_{1}S_{s}X$.
This means that the cover $Y\simeq C_{1}S_{\bar{s}}Y$ and $\left(S_{\bar{s}}Y\right)/G=S_{s}X$
since $G$ is forced to fix $\bar{s}$. Here $S_{\bar{s}}Y$ is either
a covering space over $S_{s}X$ or a good orbifold cover of $S_{s}X$.

When $S_{s}X$ has a covering space there is only one isolated branch
point and the complement of the soul is convex. This means we can
use exactly the same strategy as in \cite[Subsection 2.2]{DGGM} to
create the Alexandrov space structure on $Y$. When $S_{s}X$ is homeomorphic
to $\Sigma\bbR\bbP^{2}$ it follows that $X$ is not orientable as
it does not have a local orientation at the soul. This means that
we can use the orientation covering as in \cite[Theorem A]{HS} as
$Y$.

The assumptions of the corollary now show that $Y$ exists and is
a topological manifold. We can then apply lemma \ref{lem:dual_max_dim}
to finish the proof.
\end{proof}
\begin{cor}
\label{cor:leq4}When $\dim X\leq4$, then $X$ is either isometric
to a join or a $\bbZ_{2}$ quotient of a join, where $\bbZ_{2}$ acts
effectively on both factors of the join. 
\end{cor}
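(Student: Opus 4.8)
The plan is to push everything down to the space of directions $S_sX$ at the soul, which is a closed Alexandrov space ($\partial S_sX=\emptyset$) with $\curv\ge1$ and $\dim S_sX=n-1\le3$, and then apply Corollary~\ref{cor:top_branched}. First I would dispose of the easy alternative: if $\rad S_sX>\pitwo$, the Weak Inner Regularity Theorem~\ref{thm:weak_inner} already exhibits $X=\hat E*\hat S$ as a genuine join. So assume $\rad S_sX=\pitwo$. The structural input I would carry along is the set of edge directions: exactly as in Lemma~\ref{lem:gradexp} (applied to the refined decomposition $\hat E,\hat S$), the directions $\hat E'\subset S_sX$ pointing to $\hat E$ form a closed convex subset isometric to $\hat E$, so that $\partial\hat E'=\emptyset$ and $\rad\hat E'\ge\pitwo$. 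Moreover $\hat E'$ is \emph{proper} of positive codimension: since $\partial\hat S\ne\emptyset$ forces $\dim\hat S\ge1$, the set $\hat E'\subset N_s\hat S$ omits every direction within $\pitwo$ of the nonempty set $S_s\hat S$.

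Next I would classify $S_sX$ using Remark~\ref{rem:diam=00003Dpi/2}, i.e. the low-dimensional classification of \cite{GGG}, which writes $S_sX=\bar Z/H$ with $\bar Z$ homeomorphic to a sphere of dimension $\le3$ and $H$ finite. The goal of this step is to show that the presence of a proper, positive-codimensional, closed, boundaryless, convex subset $\hat E'$ of radius $\pitwo$ forces $S_sX$ to be (topologically) a join, an $\bbR\bbP^2$, an $\bbR\bbP^3$, or a $\Sigma\bbR\bbP^2$. In dimensions $0,1,2$ this is routine, since $S_sX$ is then $\bbS^0$, a circle, or a surface that is $\bbS^2$ or $\bbR\bbP^2$. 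The decisive case is $\dim S_sX=3$ (that is, $n=4$). When $H$ acts freely the fact recalled in the proof of Theorem~\ref{thm:weak_inner} gives $\diam S_sX\le\pitwo$, so $\rad\hat E'=\pitwo$ and $\hat E'$ is a proper, closed, boundaryless, convex (hence totally geodesic) subset of radius $\pitwo$. A nontrivial spherical space form other than $\bbR\bbP^3$ --- in particular any lens space $\bbS^3/\bbZ_k$ with $k\ge3$ --- contains no such subset, since it has neither a closed totally geodesic surface nor a convex closed geodesic of length $\pi$, whereas $\bbR\bbP^3=\bbS^3/\bbZ_2$ does contain a totally geodesic $\bbR\bbP^2$ of radius $\pitwo$. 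When $H$ does not act freely one instead gets $\diam S_sX=\pi$, hence a spherical suspension $\{0,\pi\}*W$, which is a join and lands on $\Sigma\bbR\bbP^2$ precisely when $W$ is an $\bbR\bbP^2$.

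With $S_sX$ identified, Corollary~\ref{cor:top_branched} applies and yields $D(X)=(X_1*X_2)/G$, where $G$ is the deck group of the sphere cover of $S_sX$; by the previous paragraph the surviving possibilities have $G$ trivial (the join and the $\bbS^2$- and $\bbS^3$-type cases) or $G=\bbZ_2$ (the $\bbR\bbP^2$, $\bbR\bbP^3$ and $\Sigma\bbR\bbP^2$ cases). It remains to descend from the double to $X$, and this is exactly what Remark~\ref{rem:join_boundary} provides: when $G=\langle I\rangle$ with $I^2=\mathrm{id}$ the involution commutes with the natural reflection, so $X=(N_x\hat E*N_y\hat S)/G$. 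Taking $G$ trivial gives $X=\hat E*\hat S$ a join, and taking $G=\bbZ_2$ gives $X$ as a $\bbZ_2$-quotient of a join with $\bbZ_2$ acting effectively on both factors. This is the asserted dichotomy.

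The hard part is the $\dim S_sX=3$ step: pinning the group $G$ down to $\{1,\bbZ_2\}$, equivalently ruling out that $S_sX$ is a lens space or any spherical space form other than $\bbR\bbP^3$. This is precisely where the hypothesis $\dim X\le4$ is essential --- the five-dimensional example in Section~3 (dual pairs of nonmaximal dimension, with $S_sX=\bbC\bbP^2$ and $G=\bbS^1$) shows the conclusion fails one dimension higher. I expect the cleanest route to be the convexity argument above, carried out uniformly for all of $\bbS^3/\Gamma$ rather than by inspecting space-form groups one by one; the subtlety to be careful about is that \cite{GGG} only gives $\bar Z$ up to homeomorphism, so the ``no proper totally geodesic boundaryless convex subset of radius $\pitwo$'' claim must be argued metrically for possibly non-round $S_sX$, using the extremality of $\hat E'$ rather than literal round geometry.
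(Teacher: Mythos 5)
Your reduction to \ref{cor:top_branched} and \ref{rem:join_boundary} is the right frame, and the easy cases ($\rad S_sX>\pitwo$, low dimensions) match the paper. But the decisive step --- showing the group $G$ in $D(X)=(X_1*X_2)/G$ is trivial or $\bbZ_2$ --- is where your argument has two genuine gaps. First, you propose to read $G$ off from the homeomorphism type of $S_sX$ ("$G$ is the deck group of the sphere cover of $S_sX$"). That identification is not justified: the group produced by Lemma \ref{lem:dual_max_dim} via \cite{RW} comes from the structure of the dual pair in $D(X)$, and the homeomorphism type of a quotient does not determine the acting group (the paper's own remarks about icosahedral quotients of joins that are topological or homology spheres are a warning here). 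Second, even the topological input you want --- that a lens space $\bbS^3/\bbZ_k$, $k\ge3$, admits no proper closed boundaryless convex subset of radius $\ge\pitwo$ --- is only argued for the round metric, and you yourself flag that \cite{GGG} gives the homeomorphism type only; that unresolved "subtlety" is precisely the content that would need proof. Note also that ruling out lens spaces is not needed to invoke \ref{cor:top_branched} at all, since every spherical space form is covered by $\bbS^3$, which is a $\bbZ_2$-homology sphere.

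The paper closes this gap by a different, purely metric argument applied after \ref{cor:top_branched}: it exploits that $F=X_2/G$ is the edge $\hat E$, a closed positively curved space of dimension $\le2$ with $\rad F\ge\pitwo$, while any nontrivial isometric quotient has radius $\le\pitwo$. For $\dim X_2=1$ this immediately forces $\abs G\le2$. For $\dim X_2=2$ a case analysis on $\diam F$ (using fixed points of $G$, Toponogov hinge comparisons, and a second dual-pair decomposition of $F$ itself) shows that the only nontrivial possibility is $F=\bbR\bbP^2(1)$ with $X_2=\bbS^2(1)$ and $G=\bbZ_2$ the antipodal map. That analysis of $F$, not the topology of $S_sX$, is the heart of the proof, and it is the piece your proposal is missing.
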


\begin{proof}
Note that when $\dim X\leq2$, then either $\dim E=0$ or $\dim S=0$.
Thus the result follows from propositions \ref{prop:dimE=00003D0}
and \ref{prop:dimS=00003D0}. Similarly, in higher dimensions we can
assume that both $\hat{S}$ and $\hat{E}$ have positive dimensions.

In case $\dim X=3,4$ we need to use corollary \ref{cor:top_branched}
and remark \ref{rem:join_boundary}. First observe that when $\dim X=3$,
it follows that $S_{s}X$ is homeomorphic to $\bbS^{2}$ or $\bbR\bbP^{2}$,
while when $\dim X=4$ we can use the classification from \cite{GGG}
to conclude that $S_{s}X$ is homeomorphic to a spherical space form
or the suspension over the real projective plane. This places us in
a position where we can use corollary \ref{cor:top_branched}. 

We assume that $D\left(X\right)=X_{1}*X_{2}/G$, with $X_{1}/G=D\left(\hat{S}\right)$
and $X_{2}/G=F$. When $G$ is trivial there is nothing to prove so
we also assume $\abs G\geq2$.

As $\rad F\geq\pitwo$ we note that when $\dim X_{2}=1$ we have $G=\bbZ_{2}$
and acting as the antipodal map since $\partial F=\emptyset$.

Assume that $\dim X_{2}=2$. When $\diam F=\pi$ we obtain a dimension
reduction. 

In case $\diam F\in\left(\pitwo,\pi\right)$ it follows as in the
proof of theorem \ref{thm:weak_inner} that $G$ has two fixed points
$x,y\in X_{2}$ with $\abs{xy}=\diam X_{2}$. We then have that $S_{x,y}F=\left(S_{x,y}X_{2}\right)/G=\bbS^{1}\left(\frac{1}{\abs G}\right)$.
Let $z\in F$ be any point with $\abs{zx}=\abs{zy}<\pitwo$. The radius
assumption shows that there is $\bar{z}\in F$ with $\abs{z\bar{z}}=\pitwo$.
Consider a hinge with vertex at $x$ (or $y$). Since $\diam S_{x,y}F\leq\pitwo$
the angle of the hinge is $\leq\pitwo$. But this leads to a contradiction
as either $\abs{x\bar{z}}<\pitwo$ or $\abs{y\bar{z}}<\pitwo$ which
by Toponogov comparison implies $\abs{z\bar{z}}<\pitwo$. This shows
that this situation is impossible.

Finally we consider the case when $\diam F=\rad F=\pitwo$. This allows
us to obtain dual sets $A,B\subset F$ where, say, $\dim A=0$. By
lemma \ref{lem:dual_max_dim} $F=\left(N_{x}B*N_{y}A\right)/\Gamma$,
where $\Gamma$ acts effectively on $N_{x}B$ and $N_{y}A$. Since
$\dim N_{x}B=0$ this implies that either $N_{x}B$ is a point and
$\Gamma$ is trivial or $N_{x}B$ consists of two points distance
$\pi$ apart and $\Gamma=\bbZ_{2}$. In the former case $F=\left\{ 0\right\} *\bbS^{1}\left(\frac{1}{2}\right)$
which is impossible as $\partial F=\emptyset$. In the latter case
$F=\left(\Sigma_{1}\bbS^{1}\left(r\right)\right)/\left\langle I\right\rangle $,
$r\in\left[\frac{1}{2},1\right]$, where $I$ interchanges the two
suspension points and is an involution on $\bbS^{1}\left(r\right)$.
Such an involution is either the antipodal map or a reflection. When
$I$ is a reflection it fixes two points at distance $\pi r$ apart
which implies that $\diam F=\pi r$ and consequently $r=\frac{1}{2}$.
In $F$ consider two points $e,f$ that form an angle $\theta$ at
$A$. By Toponogov comparison
\[
\cos\abs{ef}\geq\cos\abs{Ae}\cos\abs{Af}+\sin\abs{Ae}\sin\abs{Af}\cos\theta.
\]
So when $\theta\leq\pitwo$ and $\abs{Ae},\abs{Af}\in\left(0,\pitwo\right)$
we have $\abs{ef}<\pitwo$. This shows that $\rad F<\pitwo$, when
$I$ is a reflection and $r=\frac{1}{2}$. When $I$ is an antipodal
map we must have $\theta=\pi r$ to obtain maximal distance between
$e$ and $f$. So when $\abs{Ae},\abs{Af}\in\left(0,\pitwo\right)$
and $r<1$ we have $\abs{ef}<\abs{Ae}+\abs{Af}$ as well as $\abs{ef}\leq\pi-\abs{Ae}-\abs{Af}$
when we measure the distance through $B$. But this also forces $\abs{ef}<\pitwo$.
Thus $F$ is isometric to $\bbR\bbP^{2}\left(1\right)$. This also
forces $X_{2}=\bbS^{2}\left(1\right)$ with $G$ acting as the antipodal
map.

We can now use remark \ref{rem:join_boundary} to conclude that either
$X=\left(X_{1}/\left\langle R\right\rangle \right)*X_{2}$ or $X=\left(\left(X_{1}/\left\langle R\right\rangle \right)*X_{2}\right)/\bbZ_{2}$.
\end{proof}
\begin{rem}
If we combine the constructions in the proof and remark \ref{rem:diam=00003Dpi/2},
then it is possible to obtain a classification for Alexandrov spaces
with $\curv\geq1$, $\rad\geq\pitwo$, and dimension $1,2,3$. It
would be interesting to investigate what results one can obtain for
such spaces in higher dimension when they are not finite quotients
of spheres. Do they, e.g., admit submetries onto $\left[0,\pitwo\right]$.
\end{rem}

\section{Quantified Convexity}

The main theorem about the rigidity of lenses and hemispheres has
counter parts for all convex balls in space forms. To understand this
better we introduce some specially tailored modified distance functions
for the distance to the boundary. As mentioned in the introduction
the results here have appeared in \cite{GL} for lower curvature bounds
$\geq0$, but as we use slightly different modified distance functions
we thought it useful to include our proofs.

Consider the metric ball $\bar{B}_{k}\left(r_{0}\right)$ of radius
$r_{0}$ in the space form of curvature $k$, when $k>0$ we assume
that $r_{0}<\frac{\pi}{2\sqrt{k}}$ so as to only consider strictly
convex balls. The boundary of this ball is totally umbilic and when
using the outward pointing normal the eigenvalue of the shape operator
is denoted $\lambda_{0}>0$. However, it is more natural to consider
the inward pointing normal as that is also the gradient for the distance
to $\partial\bar{B}_{k}\left(r_{0}\right)$ inside $\bar{B}_{k}\left(r_{0}\right)$.
This means that the eigenvalue becomes $-\lambda_{0}$. The specific
formula for $\lambda_{0}$ in terms of $r_{0}$ and $k$ will be given
below. Evidently $\lambda_{0}$ is a measure of the convexity of the
boundary.

Let $r\left(x\right)=\abs{x\partial\bar{B}_{k}\left(r_{0}\right)}$
be the distance to the boundary inside $\bar{B}_{k}\left(r_{0}\right)$
and consider the concentric ball of radius $r_{0}-r$ that consists
of points at distance $\geq r$ from the boundary. Denote by $\lambda\left(r\right)<0$
the eigenvalue of the shape operator of this smaller ball for the
inward pointing normal. It is not hard to see that $\lambda$ satisfies
the Riccati equation:

\[
\dot{\lambda}+\lambda^{2}=-k,\,\lambda\left(0\right)=-\lambda_{0}.
\]
The related function $\phi$ coming from: 
\[
\ddot{\phi}+k\phi=-\lambda_{0},\,\phi\left(0\right)=0,\dot{\phi}\left(0\right)=1,
\]
also satisfies $\lambda\dot{\phi}=\ddot{\phi}$ and can be used to
create a suitable modified distance function $f=\phi\circ r$ satisfying
\[
\mathrm{Hess}f+kf=-\lambda_{0}.
\]

Below we give explicit formulas for $\lambda$ and $\phi$ when $k=0,\pm1$.
With that information it is fairly easy to prove that $\lambda$ and
$f$ satisfy the above equations: 
\begin{example}
When $k=0$ it follows that $r_{0}\lambda_{0}=1$ and 
\[
\lambda\left(r\right)=\frac{1}{r-r_{0}}=\frac{\lambda_{0}}{\lambda_{0}r-1},
\]
\[
\phi\left(r\right)=r-\frac{\lambda_{0}}{2}r^{2}.
\]
\end{example}

\begin{example}
When $k=1$ it follows that $\lambda_{0}=\cot\left(r_{0}\right)$
and 
\[
\lambda=\cot\left(r-r_{0}\right)=\frac{\lambda_{0}\cot r+1}{\lambda_{0}-\cot r},
\]
\[
\phi=\sin r+\lambda_{0}\cos r-\lambda_{0}.
\]
\end{example}

\begin{example}
When $k=-1$ and there are no restrictions on $\lambda_{0}$ we have
\[
\lambda=\begin{cases}
\tanh\left(r-r_{0}\right), & 1>\lambda_{0}=\tanh r_{0},\\
-1, & 1=\lambda_{0},\\
\coth\left(r-r_{0}\right), & 1<\lambda_{0}=\coth r_{0}.
\end{cases}
\]
Here only the last case leads to a ball of finite radius. In all three
cases 
\[
\phi=\sinh r-\lambda_{0}\cosh r+\lambda_{0}.
\]
\end{example}

We say that the boundary in a Riemannian manifold $M$ is $\lambda_{0}$-convex,
when all eigenvalues for the shape operator are $\leq-\lambda_{0}$
with respect to the inward normal. When $\lambda_{0}^{2}>-k$ there
exists a metric ball $\bar{B}_{k}\left(r_{0}\right)$ as above with
$\lambda_{0}$-convex boundary. This will be our comparion model space
when $M$ has sectional curvature $\geq k$.

If $r\left(x\right)=\abs{x\partial M}$ and the boundary is $\lambda_{0}$-convex,
then standard Riccati comparison shows that the super level sets $\left\{ x\in M\mid r\left(x\right)\geq r\right\} $
have $-\lambda\left(r\right)$-convex boundary at points where $r$
is smooth, here $\lambda\left(r\right)$ is given by 
\[
\dot{\lambda}+\lambda^{2}=-k,\,\lambda\left(0\right)=-\lambda_{0}.
\]
This in turn imples that the modified distance function $f=\phi\circ r$
satisfies 
\[
\mathrm{Hess}f+kf\leq-\lambda_{0}
\]
in the support sense everywhere.

In the case of Alexandrov spaces there are several interesting model
spaces aside from the constant curvature balls $\bar{B}_{k}\left(r_{0}\right)$. 
\begin{example}
\label{exa:Alex-model}Let $Y$ be an Alexandrov space with curvature
$\geq1$ and consider the cone of radius $r_{0}$: 
\[
C=C_{k}\left(Y\right)\left(r_{0}\right)=\left\{ \left(t,y\right)\mid y\in Y,\,t\in\left[0,r_{0}\right],\,\left(0,y_{0}\right)\sim\left(0,y_{1}\right)\right\} 
\]
with constant radial curvature $k$. The distance between points $\left(t_{0},y_{0}\right)$
and $\left(t_{1},y_{1}\right)$ is determined by the law of cosines
in constant curvature $k$ with the understanding that $\left(t_{i},y_{i}\right)$
is distance $t_{i}$ from the cone point $\left(0,y\right)$ and that
the angle between $\left(t_{0},y_{0}\right)$ and $\left(t_{1},y_{1}\right)$
at the cone point is given by $\abs{y_{0}y_{1}}_{Y}$. When $k>0$
we also assume that $r_{0}\leq\frac{\pi}{2\sqrt{k}}$ in order for
this to become an Alexandrov space with curvature $\geq k$.

These cones have the same convexity properties as our model space
$\bar{B}_{k}\left(r_{0}\right)$. Specifically, if $f\left(r\left(x\right)\right)=\phi\left(\abs{x\partial C}\right)$
and $c\left(t\right)$ is any quasi-geodesic, then $f\left(t\right)=f\circ c\left(t\right)$
satisfies 
\[
\ddot{f}+kf=-\lambda_{0}.
\]
Further observe that since the distance to the cone point is $d\left(t,y\right)=t$,
we have $d+r=r_{0}$ everywhere on $C$ and the standard modified
distance $h\left(x\right)=\psi\left(d\left(x\right)\right)$, where
$\ddot{\psi}+k\psi=1,\,\psi\left(0\right)=0,\,\dot{\psi}\left(0\right)=0$
satisfies 
\[
\ddot{h}+kh=1
\]
along all quasi-geodesics. 
\end{example}

It follows from \cite[Theorem 1.8 (a)]{A-B} that the metric definition
of convexity given in the introduction implies the following theorem. 
\begin{thm}
Let $X$ be an Alexandrov space with curvature $\geq k$ and boundary
that is $\lambda_{0}$-convex. The radius of $X$ is $\leq r_{0}$
and the modified distance function $f=\phi\circ r\circ\gamma$ satisfies
$\ddot{f}+kf\leq-\lambda_{0}$ in the support sense along all (quasi)-geodesics
$\gamma$ in $X$. 
\end{thm}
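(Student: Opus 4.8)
The plan is to reduce the entire statement to the comparison principle of Alexander--Bishop, so that the genuine work is (i) to recognize that our \emph{metric} definition of $\lambda_{0}$-convexity is exactly the hypothesis of \cite[Theorem 1.8 (a)]{A-B}, and (ii) to harvest the radius estimate from the resulting differential inequality. Throughout write $r\left(x\right)=\abs{x\partial X}$ and $f=\phi\circ r$, so that the function in the statement is $f$ restricted to a quasi-geodesic $\gamma$. Here $\phi$ is the model function computed in the three examples above, characterized by $\ddot{\phi}+k\phi=-\lambda_{0}$, $\phi\left(0\right)=0$, $\dot{\phi}\left(0\right)=1$, with companion Riccati solution $\lambda$ solving $\dot{\lambda}+\lambda^{2}=-k$, $\lambda\left(0\right)=-\lambda_{0}$, and the identity $\lambda\dot{\phi}=\ddot{\phi}$. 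The heart of the theorem is the inequality $\ddot{f}+kf\le-\lambda_{0}$ along every quasi-geodesic; the bound $\rad X\le r_{0}$ will then be a soft consequence.

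For the differential inequality I would argue pointwise and in the support (barrier) sense, following the Riemannian Riccati picture recalled just before the theorem. Fix a quasi-geodesic $\gamma$, a parameter $t_{0}$, and a nearest boundary point $p$ to $x=\gamma\left(t_{0}\right)$, so that $\abs{xp}=r\left(x\right)$. The displayed $\lambda_{0}$-convexity inequality at $p$ (with this $x$) says precisely that, to second order, $\partial X$ curves away from the ray $\overrightarrow{px}$ at least as fast as $\partial\bar{B}_{k}\left(r_{0}\right)$ curves away from its inward radial direction, which is the initial condition $\lambda\left(0\right)=-\lambda_{0}$. Transplanting the model distance-to-boundary function and invoking Toponogov hinge comparison for $\curv\ge k$ produces a smooth upper barrier for $r$ near $x$ agreeing with $r$ at $x$; composing with the increasing function $\phi$ yields an upper barrier $h\ge f$ with $h\left(x\right)=f\left(x\right)$ and, by the model identity, $\ddot{h}+kh=-\lambda_{0}$ at $x$. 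This is exactly the content of \cite[Theorem 1.8 (a)]{A-B}, and it gives $\ddot{f}+kf\le-\lambda_{0}$ in the support sense.

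Granting the inequality, the radius bound is clean and uniform in $k$. An upper support $h\ge f$ touching at $t_{0}$ forces $\left(f\circ\gamma\right)^{\prime-}\!\left(t_{0}\right)\ge h^{\prime}\left(t_{0}\right)\ge\left(f\circ\gamma\right)^{\prime+}\!\left(t_{0}\right)$, so along every quasi-geodesic the one-sided derivatives satisfy $\left(f\circ\gamma\right)^{\prime+}\le\left(f\circ\gamma\right)^{\prime-}$. Let $s$ be the soul, put $R=r\left(s\right)=\max_{X}r$, choose a nearest boundary point $p$ and a minimal geodesic $\gamma\colon\left[0,R\right]\to X$ from $p$ to $s$; then $r\circ\gamma\left(t\right)=t$, since $t\ge r\left(\gamma\left(t\right)\right)\ge R-\abs{\gamma\left(t\right)s}=t$, so $f\circ\gamma=\phi$ on $\left[0,R\right]$ and in particular $\left(f\circ\gamma\right)^{\prime-}\!\left(R\right)=\dot{\phi}\left(R\right)$. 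Extend $\gamma$ to a quasi-geodesic past $s$; because $R$ is the maximum of $r$, the right derivative $\left(r\circ\gamma\right)^{\prime+}\!\left(R\right)$ is $\le0$, whence by the chain rule $\left(f\circ\gamma\right)^{\prime+}\!\left(R\right)=\dot{\phi}\left(R\right)\cdot\left(r\circ\gamma\right)^{\prime+}\!\left(R\right)$. If $R>r_{0}$, a direct check in each model gives $\dot{\phi}\left(R\right)<0$; indeed $\dot{\phi}=\sin\left(r_{0}-r\right)/\sin r_{0}$, $1-\lambda_{0}r$, and $\sinh\left(r_{0}-r\right)/\sinh r_{0}$ for $k=1,0,-1$, each negative for $r_{0}<r$ in the admissible range. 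This forces $\left(f\circ\gamma\right)^{\prime+}\!\left(R\right)\ge0>\dot{\phi}\left(R\right)=\left(f\circ\gamma\right)^{\prime-}\!\left(R\right)$, contradicting $\left(f\circ\gamma\right)^{\prime+}\le\left(f\circ\gamma\right)^{\prime-}$. Hence $\max_{X}r\le r_{0}$, which is the asserted radius bound.

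The main obstacle is the barrier construction underlying the differential inequality, not the radius argument. The function $r$ is only semiconcave and may fail to be smooth precisely where it matters --- at cut points of $\partial X$, where the foot point $p$ is non-unique, and at foot points lying on the (possibly non-Alexandrov) boundary --- while the curves $\gamma$ are merely quasi-geodesics rather than geodesics, so that only one-sided derivatives are available. Producing upper barriers valid in the support sense across all of these degeneracies, from nothing but the one-sided metric convexity inequality and the lower curvature bound, is exactly the technical payload of \cite[Theorem 1.8 (a)]{A-B}. Our task is therefore the verification that our convexity definition matches their hypothesis together with the model computations for $\phi$ and $\lambda$ that make the output read $\ddot{f}+kf\le-\lambda_{0}$; the radius estimate is then extracted by the elementary semiconcavity argument above.
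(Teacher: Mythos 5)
Your handling of the differential inequality coincides with the paper's: the paper gives no argument here at all, stating the theorem as a direct consequence of \cite[Theorem 1.8 (a)]{A-B}, so deferring the barrier construction to Alexander--Bishop and observing that the metric definition of $\lambda_{0}$-convexity is exactly their hypothesis is precisely what is (implicitly) being done. Your model computations for $\phi$, $\lambda$, and $\dot{\phi}$ are correct, and the squeeze $\left(f\circ\gamma\right)^{\prime+}\leq\left(f\circ\gamma\right)^{\prime-}$ from an upper support function is valid.

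The one substantive caveat concerns your radius argument. The first-derivative comparison at the maximum of $r$ along a minimal geodesic from a nearest boundary point through the soul, extended to a quasi-geodesic, is correct, but what it establishes is $\max_{X}\abs{x\partial X}\leq r_{0}$, i.e.\ the \emph{inradius} bound; this is \cite[Cor.\ 1.9 (1)]{A-B}, which the paper cites separately at the start of the proof of theorem \ref{thm:rad-rigid-generic}. The quantity $\rad X=\min_{p}\max_{q}\abs{pq}$ used throughout the paper is not obviously controlled by the inradius (for the lens $L_{\alpha}^{n}$ the two differ substantially). To obtain $\rad X\leq r_{0}$ one must integrate the inequality $\ddot{f}+kf\leq-\lambda_{0}$ along quasi-geodesics emanating from the soul $s$: with $f\left(0\right)=\phi\left(r_{1}\right)$, $r_{1}\leq r_{0}$, and $\dot{f}\left(0\right)\leq0$, the comparison solution satisfies $\bar{f}\left(r_{0}\right)\leq0$, so every quasi-geodesic from $s$ meets the boundary by time $r_{0}$ and hence $X\subset\bar{B}\left(s,r_{0}\right)$. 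This is exactly the argument the paper runs in theorem \ref{thm:rad-rigid-generic} (where the hypothesis $\lambda_{0}^{2}>\max\left\{ -k,0\right\} $, needed for $r_{0}$ to be finite and for strict concavity, is made explicit). If the word ``radius'' in the present statement is read as $\rad X$, as it is everywhere else in the paper, your proof needs this additional step; if it is read as the inradius, your argument is complete as written.
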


This shows in particular that 
\begin{cor}
\label{prop:Comp}For any (quasi)-geodesic $c\left(t\right)$ in $X$
with curvature $\geq k$ and $\lambda_{0}$-convex boundary we have
that $f\left(t\right)=\phi\circ r\circ c\left(t\right)$ satisfies
\[
f\left(t\right)\leq\bar{f}\left(t\right)
\]
for all $t\geq0$ where $\bar{f}\geq0$ and $\bar{f}$ is the unique
solution to: 
\[
\ddot{\bar{f}}+k\bar{f}=-\lambda_{0},\,\bar{f}\left(0\right)=f\left(0\right),\dot{\bar{f}}\left(0\right)=\dot{f}^{+}\left(0\right).
\]
\end{cor}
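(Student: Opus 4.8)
The plan is to read the Corollary off the preceding theorem by a one-variable comparison argument of exactly the type recalled in Section~1. By that theorem $f(t)=\phi\circ r\circ c(t)$ satisfies $\ddot f+kf\le-\lambda_{0}$ in the support sense along the quasi-geodesic $c$, while by construction $\bar f$ is the smooth solution of the corresponding \emph{equality} $\ddot{\bar f}+k\bar f=-\lambda_{0}$ with $\bar f(0)=f(0)$ and $\dot{\bar f}(0)=\dot{f}^{+}(0)$. Setting $u=f-\bar f$ and subtracting the smooth function $\bar f$ from the upper support functions of $f$ shows that $u$ again admits upper supports and satisfies $\ddot u+ku\le0$ in the support sense, now with matched initial data $u(0)=0$ and $\dot{u}^{+}(0)=0$. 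Thus the entire statement reduces to the assertion that such a $u$ is $\le0$.

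To prove $u\le0$ I would compare $u$ against the solution $s_{k}$ of the homogeneous equation $\ddot{s}_{k}+k s_{k}=0$ with $s_{k}(0)=0$, $\dot{s}_{k}(0)=1$ (so $s_{k}(t)=\sin(\sqrt{k}\,t)/\sqrt{k}$, $t$, or $\sinh(\sqrt{-k}\,t)/\sqrt{-k}$ according as $k>0$, $k=0$, $k<0$), using the Wronskian $W=s_{k}\dot u-\dot{s}_{k}u$. Formally $\dot W=s_{k}\ddot u-\ddot{s}_{k}u=s_{k}(\ddot u+ku)\le0$ on the set where $s_{k}>0$; to make this rigorous I would evaluate it on the smooth upper supports $u_{\varepsilon}$ touching $u$ at a given point, which give $\dot{W}_{\varepsilon}\le s_{k}\varepsilon$ there, and then pass to the limit $\varepsilon\to0$ via the barrier calculus of Section~1 (see also \cite{Pe}). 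Since $s_{k}(0)=u(0)=0$ we have $W(0^{+})=0$, so monotonicity forces $W\le0$, hence $\frac{d}{dt}(u/s_{k})=W/s_{k}^{2}\le0$; as $u/s_{k}\to\dot{u}^{+}(0)/\dot{s}_{k}(0)=0$ when $t\to0^{+}$, we conclude $u/s_{k}\le0$ and therefore $u\le0$ wherever $s_{k}>0$. The main obstacle is entirely this soft point: a naive maximum principle fails, because at an interior positive maximum the support inequality only yields $\ddot{u}_{\varepsilon}\le\varepsilon-ku$, which is not sign-definite, so one genuinely needs the Wronskian together with the upper-support approximation and the $\varepsilon\to0$ passage.

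This establishes $f\le\bar f$ precisely on the interval where $s_{k}>0$: all of $[0,\infty)$ when $k\le0$ and $[0,\pi/\sqrt{k})$ when $k>0$; for $k\le0$, the case exploited in this section, this is the full range $t\ge0$ asserted in the statement, and for $k>0$ the comparison must simply not be run past the first zero of $s_{k}$. Finally, the asserted nonnegativity of $\bar f$ comes for free: since $\phi\ge0$ on $[0,r_{0}]$ and $r\le r_{0}$ by the radius bound of the preceding theorem, we have $f=\phi\circ r\ge0$, whence $\bar f\ge f\ge0$ on the same interval.
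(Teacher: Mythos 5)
Your proof is correct and amounts to the standard support-sense comparison argument that the paper invokes implicitly: the corollary is stated there as an immediate consequence of the preceding theorem, and your reduction to $u=f-\bar f$ followed by the Wronskian/Sturm comparison with $s_k$ is the same calculus used elsewhere in the paper (e.g.\ in the proof of Proposition \ref{prop:E}, following \cite[lemma 2.1.3]{Pe}). The only small point is interpretive: the clause ``where $\bar f\geq0$'' delimits the interval on which the inequality is asserted (which is how the corollary is applied in Theorem \ref{thm:rad-rigid-generic}) rather than being a conclusion to derive, and since the first zero of $\bar f$ always precedes the first zero of $s_k$, your range of validity covers exactly what is needed.
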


The second theorem in the introduction can now be proven as follows. 
\begin{thm}
\label{thm:rad-rigid-generic}Let $X$ be an Alexandrov space with
curvature $\geq k$ and boundary that is $\lambda_{0}$-convex, where
$\lambda_{0}^{2}>\max\left\{ -k,0\right\} $. The radius satisfies
$\rad X\leq r_{0}$ with equality only when $X$ is isometric to the
cone $C_{k}\left(S_{s}X\right)\left(r_{0}\right)$ for a suitable
$s\in X$. 
\end{thm}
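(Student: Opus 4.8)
The bound $\rad X\le r_0$ is already contained in the preceding theorem, so the entire content is the equality case, and the plan is to run the two–function maximum–principle argument exactly parallel to the one used for proposition \ref{prop:dimS=00003D0}, now with the curvature–$k$ modified distances. Let $s$ be the soul, i.e. the point where $r\left(x\right)=\abs{xs}$'s companion $\abs{x\partial X}$ is maximal; strict $k$-concavity of $f=\phi\circ r$ coming from $\ddot{f}+kf\le-\lambda_0<0$ makes $s$ unique, and the hypothesis $\rad X=r_0$ means precisely that $r\left(s\right)=r_0$, which forces $\phi'\left(r_0\right)=0$. Alongside $f$ I would use the standard modified distance to the soul, $h=\psi\circ d_s$ with $d_s\left(x\right)=\abs{xs}$ and $\ddot{\psi}+k\psi=1$, $\psi\left(0\right)=\dot{\psi}\left(0\right)=0$; the lower curvature bound gives $\ddot{h}+kh\le1$ in the support sense along every quasi-geodesic, while example \ref{exa:Alex-model} shows that both comparison inequalities become equalities on the model cone.

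The algebraic heart is to combine $f$ and $h$ into a single function that is constant on the model cone and $k$-superharmonic on $X$. Differentiating the two defining ODEs shows that both $\phi'$ and the function $r\mapsto\psi'\left(r_0-r\right)$ solve the Jacobi equation $u''+ku=0$; since $\phi'\left(r_0\right)=0=\psi'\left(0\right)$ they both vanish at $r=r_0$ and are therefore proportional, say $\phi'\left(r\right)=C\,\psi'\left(r_0-r\right)$ with $C=1/\psi'\left(r_0\right)>0$. Consequently $G\left(r\right):=\phi\left(r\right)+C\psi\left(r_0-r\right)$ has $G'\equiv0$, so $G\equiv\phi\left(r_0\right)$, and comparing with the model equalities yields the numerical identity $C-\lambda_0=k\phi\left(r_0\right)$. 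I then set
\[
u=f+Ch-\phi\left(r_0\right)=\phi\left(r\right)+C\psi\left(d_s\right)-\phi\left(r_0\right).
\]
On one hand $\ddot{u}+ku=\left(\ddot{f}+kf\right)+C\left(\ddot{h}+kh\right)-k\phi\left(r_0\right)\le-\lambda_0+C-k\phi\left(r_0\right)=0$ in the support sense along every quasi-geodesic. On the other hand the triangle inequality gives $d_s\left(x\right)+r\left(x\right)\ge r\left(s\right)=r_0$ everywhere, and since $\psi$ is increasing, $u\ge\phi\left(r\right)+C\psi\left(r_0-r\right)-\phi\left(r_0\right)=G\left(r\right)-\phi\left(r_0\right)=0$, with $u\left(s\right)=0$.

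Now I would apply the minimum principle just as in proposition \ref{prop:dimS=00003D0}. Because $\phi'\left(r_0\right)=0$ and $\psi'\left(0\right)=0$, the right derivative $\dot{u}^{+}\left(s\right)$ vanishes in every direction; comparing $u$ along any minimal geodesic from $s$ with the solution $\bar{u}$ of $\ddot{\bar{u}}+k\bar{u}=0$, $\bar{u}\left(0\right)=\dot{\bar{u}}\left(0\right)=0$ via corollary \ref{prop:Comp} gives $u\le\bar{u}\equiv0$, whence $u\equiv0$ on $X$. This forces $d_s+r\equiv r_0$, in particular $X=\bar{B}\left(s,r_0\right)$, and—more importantly—equality in both support inequalities along every quasi-geodesic; in particular $\ddot{h}+kh=1$ everywhere. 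This is exactly the statement that the distance to $s$ has constant radial curvature $k$, so $\gexp_s\left(k;\cdot\right):C_k\left(S_sX\right)\left(r_0\right)\to X$, which is always onto and distance nonincreasing, is forced to be distance preserving and hence an isometry $X\cong C_k\left(S_sX\right)\left(r_0\right)$.

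The main obstacle I expect is precisely this last step: upgrading the pointwise equality $\ddot{h}+kh=1$, holding in the support sense along every quasi-geodesic, to the global conclusion that $\gexp_s\left(k;\cdot\right)$ is an isometry. This is the rigidity (equality) case of the second–variation comparison based at $s$, and in the Alexandrov category it must be argued through the rigidity of comparison hinges rather than a smooth Hessian computation; it is exactly the point at which proposition \ref{prop:dimS=00003D0} passes from the identity $f_{\partial}+f_1=1$ to ``$\gexp_s$ is an isometry,'' and I would follow that route verbatim. A secondary technical point, to be checked when $k>0$, is that the interval lengths entering the comparison stay below the first conjugate length $\pi/\sqrt{k}$ of $\tfrac{d^2}{dt^2}+k$, which holds because $\diam X<\pi/\sqrt{k}$ for an Alexandrov space with $\lambda_0$-convex boundary and $\lambda_0^2>\max\left\{-k,0\right\}$.
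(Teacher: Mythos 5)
Your overall strategy is the same as the paper's: the paper also reduces the equality case to the identity $\abs{xs}+\abs{x\partial X}=r_{0}$ and the resulting pinching $\ddot{h}+kh=1$, $\ddot{f}+kf=-\lambda_{0}$, and it explicitly models this step on the two--function minimum--principle argument of proposition \ref{prop:dimS=00003D0}, which is exactly your function $u=f+Ch-\phi\left(r_{0}\right)$ in disguise. Your computation of $C=1/\dot{\psi}\left(r_{0}\right)$ and the identity $C-\lambda_{0}=k\phi\left(r_{0}\right)$ are correct, and your final passage from $\ddot{h}+kh=1$ to the isometry with the cone is handled at the same level of detail (citation to Petrunin) as in the paper.

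There is, however, one genuine gap: the claim that ``the hypothesis $\rad X=r_{0}$ means precisely that $r\left(s\right)=r_{0}$.'' The radius $\min_{p}\max_{x}\abs{px}$ and the inradius $\max_{x}\abs{x\partial X}$ are different invariants, and their identification here is a theorem, not a definition; indeed it is the content of the first half of the paper's proof, where one writes down the explicit comparison functions $\bar{f}$ for a quasi-geodesic emanating from the soul with initial data $f\left(0\right)=\phi\left(r_{1}\right)$, $\dot{f}\left(0\right)\leq0$, and checks from the closed formulas that $\bar{f}\left(r_{0}\right)\leq0$ with equality only when $r_{1}=r_{0}$; this is what shows that an inradius $r_{1}<r_{0}$ forces $\rad X<r_{0}$. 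Your argument genuinely needs this implication, since both the lower bound $u\geq0$ (which rests on $\abs{xs}+\abs{x\partial X}\geq\abs{s\partial X}=r_{0}$) and the normalization $u\left(s\right)=0$ fail if $r\left(s\right)<r_{0}$. Once you insert that comparison step --- which uses only machinery you already invoke --- the rest of your proposal goes through and coincides with the paper's proof.
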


\begin{proof}
It follows from \cite[Cor 1.9 (1)]{A-B} that $r\left(x\right)\leq r_{0}$
for all $x$, i.e., the inradius is $\leq r_{0}$. This in turn implies
that $f$ is strictly concave (see also \cite[Thm 1.8 (a)]{A-B}).
For our modified distance functions this is obvious when $k\geq0$
as we have $\ddot{f}\leq-kf-\lambda_{0}$. When $k=-1$ this also
shows: 
\[
\ddot{f}\leq f-\lambda_{0}=\sinh r-\coth r_{0}\cosh r=-\frac{\cosh\left(r-r_{0}\right)}{\sinh r_{0}}.
\]
This shows that $X$ has a unique point soul $s\in X$ at maximal
distance $r_{1}\leq r_{0}$ from the boundary. Consider a quasi-geodesic
$c:\left[0,b\right]\rightarrow X$ with $c\left(0\right)=s$ and the
function $f\left(t\right)=f\circ c$. This function satisfies 
\[
\ddot{f}+kf\leq-\lambda_{0},\,f\left(0\right)=\phi\left(r_{1}\right),\dot{f}\left(0\right)\leq0.
\]
By proposition \ref{prop:Comp} $f\leq\bar{f}$, where 
\[
\ddot{\bar{f}}+k\bar{f}=-\lambda_{0},\,\bar{f}\left(0\right)=\phi\left(r_{1}\right),\dot{\bar{f}}\left(0\right)=0.
\]
The explicit form of $\bar{f}$ is as follows 
\[
\bar{f}=\begin{cases}
-\cot r_{0}+\left(\sin r_{1}+\cot r_{0}\cos r_{1}\right)\cos t, & k=1,\\
r_{1}-\frac{r_{1}^{2}}{2r_{0}}-\frac{t^{2}}{2r_{0}}, & k=0,\\
\coth r_{0}-\left(\sinh r_{1}-\coth r_{0}\cosh r_{1}\right)\cosh t, & k=-1.
\end{cases}
\]
These expressions imply that $\bar{f}\left(r_{0}\right)\leq0$, and
that $\bar{f}\left(r_{0}\right)=0$ only occurs when $r_{1}=r_{0}$.
This shows that $b\leq r_{0}$ and consequently that the radius is
$\leq r_{0}$. Moreover, equality can only happen when $r_{1}=r_{0}$,
i.e., the boundary is at constant distance from the soul.

The remainder of the proof is along the same lines as the rigidity
statement in proposition \ref{prop:dimS=00003D0}. If we assume that
$r_{1}=r_{0}$, then this shows more generally that any quasi-geodesic
emanating from the soul must hit the boundary precisely when $t=b=r_{0}$.
Since this agrees with the distance to any point on the boundary any
such quasi-geodesic is a minimal geodesic. In particular, any point
in $X$ lies on a minimal geodesic from the soul to the boundary and
$d\left(x\right)+r\left(x\right)=\abs{xs}+\abs{x\partial X}=r_{0}$
for all $x\in X$. This shows that $\dot{d}=-\dot{r}$ and $\ddot{r}=-\ddot{d}$
along any geodesic in $X$. As both $d$ and $r$ have natural upper
bounds on these second derivatives they also have the same natural
lower bounds. For example when $k=0$ we have 
\begin{eqnarray*}
\ddot{d} & \leq & \frac{1-\dot{d}^{2}}{d}\textrm{ and}\\
\ddot{r} & \leq & \frac{1-\dot{r}^{2}}{r-r_{0}}=-\frac{1-\dot{d}^{2}}{d}.
\end{eqnarray*}
In particular, $\ddot{r}=\lambda\left(r\right)$ and $\ddot{f}+kf=-\lambda_{0}$.
Moreover, with notation as in example \ref{exa:Alex-model} the modified
distance function $h=\psi\circ d$, satisfies $\ddot{h}+kh=1$. This
shows that $X$ is isometric to the cone $C=C_{k}\left(S_{s}X\right)\left(r_{0}\right)$
via the gradient exponential map $\mathrm{gexp}_{s}\left(k;\cdot\right):C\rightarrow X$.
Specifically, in both cases the gradient flow for $h$ from the center
is a flow along minimal geodesics and the distance between points
on these radial geodesics is governed by the equation $\ddot{h}+kh=1$
(see also \cite[Section 2, esp. Thm 2.3.1]{Pe}). 
\end{proof}
\begin{rem}
The rigidity aspect of this proof can also be found in \cite[Cor 1.10 (1)]{A-B}
where the authors establish rigidity for spaces $X$ with curvature
$\geq k$, $\lambda_{0}$-convex boundary, and maximal inradius, i.e.,
the maximal distance to the boundary is $r_{0}$. Note that while
the radius of the lens $L_{\alpha}^{n}$ is $\frac{\pi}{2}$, its
inradius is $\frac{\alpha}{2}$. 
\end{rem}

\begin{cor}
Let $X$ be an Alexandrov space with curvature $\geq k$ and boundary
that is $\lambda_{0}$-convex, where $\lambda_{0}^{2}>\max\left\{ -k,0\right\} $.
If $\vol_{n-1}\partial X=\vol_{n-1}\partial\bar{B}_{k}\left(r_{0}\right)$,
then $X$ is isometric to $\bar{B}_{k}\left(r_{0}\right)$. 
\end{cor}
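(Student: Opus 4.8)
The plan is to run the argument of Proposition \ref{prop:Lytchak} and the Maximal Volume Theorem, but with the round ball $\bar{B}\left(o_{s},\pitwo\right)$ replaced by the constant curvature model ball $\bar{B}_{k}\left(r_{0}\right)$ and the spherical gradient exponential map replaced by $\gexp_{s}\left(k;\cdot\right)$. Write $\omega_{n-1}=\vol_{n-1}\bbS^{n-1}\left(1\right)$ and let $\mathrm{sn}_{k}$ denote the generalized sine, i.e. the solution of $\ddot{\mathrm{sn}}_{k}+k\,\mathrm{sn}_{k}=0$ with $\mathrm{sn}_{k}\left(0\right)=0,\ \dot{\mathrm{sn}}_{k}\left(0\right)=1$; then the geodesic $\rho$-sphere $\Sigma_{\rho}=\partial\bar{B}\left(o_{s},\rho\right)$ in $C_{k}\left(S_{s}X\right)$ is isometric to $S_{s}X$ rescaled by $\mathrm{sn}_{k}\left(\rho\right)$, so $\vol_{n-1}\Sigma_{\rho}=\mathrm{sn}_{k}\left(\rho\right)^{n-1}\vol_{n-1}S_{s}X$ and $\vol_{n-1}\partial\bar{B}_{k}\left(r_{0}\right)=\mathrm{sn}_{k}\left(r_{0}\right)^{n-1}\omega_{n-1}$.

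First I would fix the soul $s\in X$ produced in the proof of Theorem \ref{thm:rad-rigid-generic} (the unique maximum of $r\left(\cdot\right)=\abs{\cdot\,\partial X}$, at depth $r_{1}\leq r_{0}$) and set $\rho_{\max}=\max_{x}\abs{sx}$, so that $X=\bar{B}\left(s,\rho_{\max}\right)$. The comparison $f\leq\bar{f}$ from that proof, together with $f\geq0$ and the fact that $\bar{f}$ is strictly decreasing on $\left[0,r_{0}\right]$ with $\bar{f}\left(r_{0}\right)\leq0$ and $\bar{f}\left(r_{0}\right)=0$ exactly when $r_{1}=r_{0}$, yields $\rho_{\max}\leq r_{0}$ and shows that $\rho_{\max}=r_{0}$ forces $r_{1}=r_{0}$. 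Since $X=\bar{B}\left(s,\rho_{\max}\right)$, the distance-nonincreasing surjection $\gexp_{s}\left(k;\cdot\right)\colon\bar{B}\left(o_{s},\rho_{\max}\right)\to X$ satisfies $\partial X\subset\gexp_{s}\left(k;\partial\bar{B}\left(o_{s},\rho_{\max}\right)\right)$, exactly as in Proposition \ref{prop:Lytchak}. Because $\gexp_{s}\left(k;\cdot\right)$ cannot increase $\left(n-1\right)$-dimensional volume and $S_{s}X$ is an $\left(n-1\right)$-dimensional Alexandrov space with $\curv\geq1$, Bishop volume comparison gives
\[
\vol_{n-1}\partial X\ \leq\ \vol_{n-1}\Sigma_{\rho_{\max}}\ =\ \mathrm{sn}_{k}\left(\rho_{\max}\right)^{n-1}\vol_{n-1}S_{s}X\ \leq\ \mathrm{sn}_{k}\left(r_{0}\right)^{n-1}\omega_{n-1},
\]
where the last step uses that $\mathrm{sn}_{k}$ is increasing on $\left[0,r_{0}\right]$. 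This already establishes the volume inequality asserted in the theorem.

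Under the equality hypothesis $\vol_{n-1}\partial X=\vol_{n-1}\partial\bar{B}_{k}\left(r_{0}\right)=\mathrm{sn}_{k}\left(r_{0}\right)^{n-1}\omega_{n-1}$ the entire chain collapses. In particular $\mathrm{sn}_{k}\left(\rho_{\max}\right)=\mathrm{sn}_{k}\left(r_{0}\right)$ forces $\rho_{\max}=r_{0}$, hence $r_{1}=r_{0}$ by the first step, so $\rad X=r_{0}$; and $\vol_{n-1}S_{s}X=\omega_{n-1}$ forces, by the rigidity in Bishop's inequality for $\curv\geq1$, that $S_{s}X$ is isometric to $\bbS^{n-1}\left(1\right)$. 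Now Theorem \ref{thm:rad-rigid-generic}, applied in its equality case $\rad X=r_{0}$, shows that $X$ is isometric to the cone $C_{k}\left(S_{s}X\right)\left(r_{0}\right)$. Since $S_{s}X=\bbS^{n-1}\left(1\right)$, this cone is precisely the constant curvature model ball, so $X$ is isometric to $\bar{B}_{k}\left(r_{0}\right)$.

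The main obstacle is the boundary-covering step $\partial X\subset\gexp_{s}\left(k;\partial\bar{B}\left(o_{s},\rho_{\max}\right)\right)$: one must verify that every boundary point is the image of a point on the outer sphere of the domain, so that the $1$-Lipschitz bound on $\left(n-1\right)$-volume may be applied to $\partial X$ itself. This is the exact analogue of the corresponding claim in Proposition \ref{prop:Lytchak}, and it relies on the structure of the gradient exponential map, whose radial curves are reparametrized gradient curves for $\abs{s\,\cdot}$ and hence follow minimal geodesics until they exit through $\partial X$. The remaining ingredients---monotonicity of $\mathrm{sn}_{k}$, the identification of $\Sigma_{\rho}$ as a rescaled copy of $S_{s}X$, and the sharp Bishop inequality together with its round-sphere rigidity---are standard.
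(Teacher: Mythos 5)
Your proposal is correct and follows essentially the same route as the paper: bound $\vol_{n-1}\partial X$ by the boundary volume of the model cone $C_{k}\left(S_{s}X\right)\left(\rad X\right)$ via the $1$-Lipschitz gradient exponential map (the paper simply cites Petrunin's solution of Lytchak's problem for this step), deduce from equality that $\rad X=r_{0}$ and that $S_{s}X$ is the round sphere, and then invoke the equality case of Theorem \ref{thm:rad-rigid-generic} to identify $X$ with $C_{k}\left(S_{s}X\right)\left(r_{0}\right)=\bar{B}_{k}\left(r_{0}\right)$. The only difference is bookkeeping: you extract $S_{s}X=\mathbb{S}^{n-1}\left(1\right)$ directly from the Bishop equality before applying the cone rigidity, whereas the paper applies the cone rigidity first and reads off the round sphere from the cone's boundary volume afterward.
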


\begin{proof}
First observe that for any Alexandrov space with curvature $\geq k$
and $\rad\leq r$ we can use Petrunin's solution of Lytchak's problem
(see \cite[3.3.5]{Pe}) to show that $\vol_{n-1}\partial X\leq\vol\partial C_{k}\left(S_{p}X\right)\left(r\right)$.
In particular, if $\rad X<r_{0}$, then it follows that $\vol_{n-1}\partial X<\vol_{n-1}\partial\bar{B}_{k}\left(r_{0}\right)$.
In particular, the condition $\vol_{n-1}\partial X=\vol_{n-1}\partial\bar{B}_{k}\left(r_{0}\right)$
implies that $\rad X=r_{0}$ and the above theorem that $X=C_{k}\left(S_{s}X\right)\left(r_{0}\right)$.
Finally, $\vol_{n-1}\left(\partial C_{k}\left(S_{s}X\right)\left(r_{0}\right)\right)=\vol_{n-1}\partial\bar{B}_{k}\left(r_{0}\right)$
only when $S_{s}X=S^{n-1}\left(1\right)$. This shows that 
\[
X=C_{k}\left(S_{s}X\right)\left(r_{0}\right)=C_{k}\left(S^{n-1}\left(1\right)\right)\left(r_{0}\right)=\bar{B}_{k}\left(r_{0}\right).
\]
\end{proof}


\begin{thebibliography}{10}
\bibitem{A-B}S. Alexander and R. Bishop, \emph{Extrinsic curvature
of semiconvex subspaces in Alexandrov geometry}. Ann. Global Anal.
Geom. 37 (2010), no. 3, 241--262.

\bibitem{BMN}S. Brendle, F. Marques, and A. Neves, \emph{Deformations
of the hemisphere that increase scalar curvature}. Invent. Math. 185
(2011), no. 1, 175--197.

\bibitem{BBI}D. Burago, Yu. Burago, and S. Ivanov, \emph{A Course
in Metric Geometry}, Graduate Studies in Math. Vol 33. AMS, 2001.

\bibitem{BGP}Yu. Burago, M. Gromov, and G. Perel'man, \emph{Aleksandrov
Spaces with Curvatures Bounded from Below}. Russian Math. Surveys
47 (1992), no. 2, 1-58.

\bibitem{Da}R. J. Daverman, \emph{Decompositions of Manifolds}, AMS
Chelsea Publishing 2007.

\bibitem{DGGM}Q. Deng, F. Galaz-Garcia, L. Guijarro, M. Munn, \emph{Three-Dimensional
Alexandrov Spaces with Positive or Nonnegative Ricci Curvature}, Potential
Anal. (2018) 48:223-238.

\bibitem{dC-X}M. do Carmo and C. Xia, \emph{Rigidity theorems for
manifolds with boundary and nonnegative Ricci curvature}. Results
Math. 40 (2001), no. 1-4, 122--129.

\bibitem{GL}J. Ge and R Li, \emph{Radius estimates for Alexandrov
space with boundary,} \href{https://arxiv.org/abs/1812.02571}{https://arxiv.org/abs/1812.02571}

\bibitem{GL2}J. Ge and R Li, \emph{Rigidity for positively curved
Alexandrov spaces with boundary}, \href{https://arxiv.org/abs/1811.04257}{https://arxiv.org/abs/1811.04257}

\bibitem{GGG}F. Galaz-Garcia and L. Guijarro, \emph{On Three-Dimensional
Alexandrov Spaces}. International Mathematics Research Notices, Vol.
2015, No. 14, pp. 5560--5576.

\bibitem{GG}K. Grove and D. Gromoll, \emph{A generalization of Berger's
rigidity theorem for positively curved manifolds}, Ann. Sci. École
Norm. Sup. (4), 20 (1987), no.2, 227--239.

\bibitem{GM}K. Grove and S. Markvorsen, \emph{New extremal problems
for the Riemannian recognition program via Alexandrov geometry}. J.
Amer. Math. Soc. 8 (1995), no. 1, 1--28.

\bibitem{GP}K. Grove and P. Petersen, \emph{A radius sphere theorem},
Invt. Math. 112, 577-583 (1993).

\bibitem{GPold}K. Grove and P. Petersen, \emph{A Lens Rigidity Theorem
in Alexandrov Geometry}, arXiv:1805.10221

\bibitem{GMP}K. Grove, A Moreno, and P. Petersen, \emph{The Boundary
Conjecture for Leaf Spaces}, to appear in Ann. Inst. Fourier.

\bibitem{HW}F. Hang and X. Wang, \emph{Rigidity theorems for compact
manifolds with boundary and positive Ricci curvature}. J. Geom. Anal.
19 (2009), no. 3, 628--642.

\bibitem{HS}J. Harvey and C. Searle, \emph{Orientation and Symmetries
of Alexandrov Spaces with Applications in Positive Curvature}, J Geom
Anal (2017) 27:1636--1666.

\bibitem{K1}V. Kapovitch, \emph{Regularity of limits of noncollapsing
sequences of manifolds}. Geom. Funct. Anal. 12 (2002), no. 1, 121--137.

\bibitem{K2}V. Kapovitch, \emph{Perel'man's stability theorem}. Surveys
in differential geometry. Vol. XI, 103--136, Surv. Differ. Geom.,
11, Int. Press, Somerville, MA, 2007.

\bibitem{Li}N. Li, \emph{Globalization with pobabilistic convexity},
J Top. Anal. vol. 7 no. 4 (2015) 719-735.

\bibitem{Ly}A. Lytchak, \emph{Personal Communication}.

\bibitem{Miao}P. Miao, \emph{Positive mass theorem on manifolds admitting
corners along a hypersurface}, Adv. Theo. Math. Phys. 6 (2002).

\bibitem{Min-Oo}M. Min-Oo, \emph{Scalar curvature rigidity of asymptotically
hyperbolic spin manifolds}. Math. Ann. 285, 527--539 (1989).

\bibitem{PeDouble}A. Peterunin, \emph{Applications of quasigeodesics
and gradient curves}. Comparison Geometry (Berkeley, CA, 1993--94),
203--219, Math. Sci. Res. Inst. Publ., 30, Cambridge Univ. Press,
Cambridge, 1997.

\bibitem{PePT}A. Petrunin, \emph{Parallel Translation for Alexandrov
Space with Curvature Bounded Below}, GAFA vol. 8 (1998) 123-148.

\bibitem{Pe}A. Petrunin, \emph{Semiconcave Functions in Alexandrov's
Geometry}, arXiv:1304.0292 and Surveys in Differential Geometry. Vol.
XI, 137-201, Int. Press, Somerville, MA, 2007.

\bibitem{RW}X. Rong and Y Wang, \emph{Finite Quotient of Join in
Alexandrov Geometry}, \href{https://arxiv.org/abs/1609.07747}{https://arxiv.org/abs/1609.07747}

\bibitem{Sa}Y. Sakurai, Rigidity of Manifolds with Boundary under
a Lower Ricci Curvature Bound, Osaka J. Math. 54 (2017), 85--119

\bibitem{Wi}B. Wilking, \emph{Index parity of closed geodesics and
rigidity of Hopf fibrations}, Invent. Math. 144 (2001), no. 2, 281?-295.
\end{thebibliography}
\end{document}